\def\black{\color{black}}
\newtheorem{theorem}{Theorem}
\newtheorem{lemma}[theorem]{Lemma}
\newtheorem{remark}[theorem]{Remark}
\newtheorem{definition}[theorem]{Definition}
\newtheorem{corollary}[theorem]{Corollary}
\newtheorem{example}[theorem]{Example}
\begin{document}

\title[Relative position between a hyperboloid and a sphere]
{Classification of the relative positions between a hyperboloid and a sphere}
\author[Brozos-V\'{a}zquez \, Pereira-Sáez \, Souto-Salorio \,  Tarrío-Tobar]{M. Brozos-V\'{a}zquez \, M.J. Pereira-Sáez \, M.J. Souto-Salorio \, Ana D. Tarrío-Tobar}
\address{MBV: Departmento de Matem\'{a}ticas, Escola Polit\'ecnica Superior, Universidade da Coru\~na, Spain}
\email{miguel.brozos.vazquez@udc.gal}
\address{MJPS: Departamento de Economía Aplicada II, Facultade de Economía e Empresa,
Universidade da Coru\~na, Spain}
\email{maria.jose.pereira@udc.es}
\address{MJSS: Departamento de Computación, Facultade de Informática,
Universidade da Coru\~na, Spain}
\email{ maria.souto.salorio@udc.es}
\address{ADTT: Departmento de Matem\'{a}ticas, {Escola Técnica de Arquitectura,}
Universidade da Coru\~na, Spain}
\email{madorana@udc.es}
\thanks{Supported by projects  EM2014/009 and MTM2013-41335-P with FEDER funds (Spain).}
\subjclass[2010]{15A18, 65D18.}
\keywords{Quadric, relative position, characteristic polynomial, contact detection}

\begin{abstract}
We characterize all possible relative positions between a hyperboloid of one sheet and a sphere through  the roots of a characteristic polynomial associated to these quadrics. The classification is also suitable for a hyperboloid and a ellipsoid in some situations.  

As an application, this provides a method to detect contact between the two surfaces by a simple calculation in many real world applications.
\end{abstract}

\maketitle

\section{Introduction}\label{sect:introduction}

Given two arbitrary surfaces, a natural problem is to study how one is placed in relation with the other and, in particular, if the two of them intersect. The latter, i.e. the problem of detecting if two surfaces (or, more generally, two bodies) intersect is referred to in the literature as {\it contact detection}.
This is a broad problem of great interest nowadays. 
Problems of contact detection are found  in many fields such as 
computer graphics, robotics, computational physics,  mechanical 
systems in geomechanics, humanoid design in biomechanics,  animation and computer simulated environments among others (see for example   \cite{9}).


The contact detection problem between two arbitrary bodies is far too complicated due to the lack of geometric representativity. Thus, in most cases, one uses models with a geometric description which makes it easier to handle. Quadrics are geometric entities that issue such a description for a large variety of shapes and have proved to be suitable. On the one hand, quadrics are interesting by themselves in many of the subjects cited above; see, for example, \cite{Choi2003}, \cite{Farouki1989}, \cite{levin76}, \cite{levin78}, \cite{Lingeo}, \cite{ambrosio}, \cite{Wang 2004} and \cite{wank-wang-kim}. Also, conics are of interest in dimension two and have been considered in \cite{Choi2006} and \cite{etayo2006}. On the other hand, quadrics provide a variety of shapes to approximate more complicated surfaces or to be used as bounding volumes. 
Many types of bounding volumes have been proposed and the choice within the bounding volume hierarchy has a direct impact on the collision detection query for volume based methods. Because of its simplicity, spheres are the most widely used (see \cite{39} and \cite{34}). Welzl algorithm to find minimum bounding spheres is presented for example in \cite{44} and \cite{42}.

Generically, the literature about contact detection studies contact between convex bodies.
Moreover, most of the proposed algorithms  are indeed valid only under the condition of convexity. Our proposal is, however, to study a situation where one of the bodies 
is not convex (the hyperboloid).

The characteristic polynomial we study here can be defined for any pair of quadrics and was previously used to analyze  the relative position between conics in \cite{etayo2006} and \cite{liu2004}, or to detect contact of two ellipsoids in \cite{wank-wang-kim}. Furthermore, it can be adapted to the case of two moving ellipsoids \cite{X2011}. As we shall see presently, by the  analysis of these roots we can detect the relative position of the two quadrics and see if the two quadrics intersect, if they are tangent or if they do not touch each other. 

In this paper, we give a classification of the relative positions  between a sphere and a general circular hyperboloid of one sheet, providing the complete picture of all possible relative positions by means of the roots of the characteristic polynomial.

Our approach gives geometric information about some singular positions between the quadrics, which are represented by multiple roots of the characteristic polynomial. 
So, in addition to the techniques of Linear Algebra we come up with a geometric point of view. Those readers who just want to know the relative position between a hyperboloid and a sphere only have to solve the characteristic polynomial and go to Tables I, II or III to get how the quadrics are located.

The remainder of the paper is organized as follows. We start by setting the context, explaining the problem formally and introducing the ingredients which take part in it in Section~\ref{subsect:setting-context}. Then we state the main results of the paper (Section~\ref{sect:main-results}), that summarize the conclusions of the analysis we will carry out later. These are Theorem~\ref{th:th1}, Theorem~\ref{th:th2} and Theorem~\ref{th:th3}. Theorem~\ref{th:th1} characterizes general relative positions between a circular hyperboloid of one sheet and a sphere by means of the roots of the characteristic polynomial. Corollary~\ref{co:rlessa} and Remark~\ref{remark:Cardano} provide a method to be used in real world situations as an application of the theorem and an specific example is presented. Theorem~\ref{th:th2} and Theorem~\ref{th:th3} deal with two special cases which appear only under certain conditions which depend on the geometry of the hyperboloid, the sphere and the relation between them. This completes the global study. Two more examples are given to illustrate how to apply the theorems to identify the relative positions. 

Section~\ref{sect:analysis} is devoted to technical results we will need to prove our main results. We  begin with some remarks on the roots of the characteristic polynomial in Section~\ref{subsect:remarks-characteristic-polynomial}.  Along Section~\ref{subsect:tangency-multiplicity} we study the relations between tangency and the multiplicity of roots. A technique based on moving a sphere along a continuous path is establish in Section~\ref{subsect:moving-sphere} and, finally, the last lemmas in Section~\ref{subsect:characterization-relative-positions} characterize different relative positions between the two quadrics: the non-contact possible situations and the cases in which the intersection curve has two connected components.
We finish the exposition proving in Section~\ref{sect:proofs-ths} the results stated in Section~\ref{sect:main-results}. 

Thus, all the main results and conclusions are concentrated in Section~\ref{sect:main-results}, whereas Sections~\ref{sect:analysis} and \ref{sect:proofs-ths} are devoted to the more technical results, analysis and proofs. 

\subsection{Setting the context.}\label{subsect:setting-context}
We are considering a circular hyperboloid of one sheet. If it is centered at the origin and given in standard form, its equation is
\[
\mathcal{H}:\frac{x^2}{a^2}+\frac{y^2}{a^2}-\frac{z^2}{c^2}=1\,  \text{ for } a,c>0.
\]
We use the standard notation of projective space to associate a $4\times 4$ matrix to $\mathcal{H}$ as follows. Let $X=(x,y,z,1)^t$ be a generic vector in homogeneous coordinates that is not at infinity. Then $X^tHX=0$ is the equation of $\mathcal{H}$
with associated matrix $H$ given by
\[
H=\left(   \begin{array}{cccc}
a^{-2} &0&0&0 \\
0&a^{-2} &0&0 \\
0&0&-c^{-2} &0 \\
0&0&0&-1
\end{array} \right ).
\]
When considering the problem of detecting contact with the hyperboloid we do not assume this is centered at the origin and in standard form, but the study of this case is enough for our purposes as we shall justify presently in Remark~\ref{remark:standar-form-hyperboloid}.

Also, we consider a sphere $\mathcal{S}$ of radious $r>0$ with center at $(x_c,y_c,z_c)$:
\[
\mathcal{S}:(x-x_c)^2+(y-y_c)^2+(z-z_c)^2=r^2\,.
\]
Taking homogeneous coordinates $X=(x,y,z,1)^t$ as before we express the sphere $\mathcal{S}$ as: $X^t S X=0$ with
\[
S=\left(   \begin{array}{rrrr}
1&0&0&-x_c \\
0&1&0&-y_c \\
0&0&1&-z_c \\
-x_c&-y_c&-z_c&-r^2+x_c^2+y_c^2+z_c^2
\end{array} \right ).
\]

We are going to study the relative positions between the hyperboloid and the sphere, being of interest when there is contact between them, this is, when they have at least a common point. Of special significance is the tangent position which corresponds to a point of contact between the surfaces with a common tangent plane. We formalize this in the following definition.

\begin{definition}
We say that $\mathcal{H}$ and $\mathcal{S}$ are in {\it contact} if there exists $X$ such that $X^tHX=X^tSX=0$.

Moreover, we say that $\mathcal{H}$ and $\mathcal{S}$ are {\it tangent} at a point $X$ if  $X^tHX=X^tSX=0$ and $Y^tHX=0$ if and only if $Y^tSX=0$, this is, $\mathcal{H}$ and $\mathcal{S}$ are in contact at $X$ and have the same tangent plane at $X$. 
\end{definition}

The hyperboloid divides the space into two pieces, one which is simply connected and other one which is not. We call the first one {\it interior} of $\mathcal{H}$ and the other one {\it exterior} of $\mathcal{H}$. Thus we say that a point $P$ is {\it interior} to $\mathcal{H}$ if $P^t H P<0$ and {\it exterior} to $\mathcal{H}$ if $P^t H P>0$. By extension, we say that $\mathcal{S}$ is interior (or exterior) to $\mathcal{H}$ if every point in $\mathcal{S}$ is interior (or exterior) to $\mathcal{H}$, respectively. 

\begin{definition}
The following polynomial of degree $4$: 
\[
f(\lambda)=\operatorname{det}(\lambda H+S)
\]
is said to be the {\it characteristic polynomial} of $\mathcal{H}$ and $\mathcal{S}$.
\end{definition}

\begin{remark}\rm
Note that $\operatorname{det}(H)\neq 0$ since $a\neq 0$ and $c\neq 0$. Hence $\operatorname{det}(\lambda H+S)=\operatorname{det}(H)\operatorname{det}(\lambda Id+H^{-1}S)$. This shows that $\operatorname{det}(\lambda H+S)=0$ if and only if $\operatorname{det}(\lambda Id+H^{-1}S)=0$ and the roots of the characteristic polynomial are the eigenvalues of $-H^{-1}S$. 

The study of this characteristic polynomial to analyze the relation between two symmetric bilinear forms was considered previously to solve other geometric or algebraic problems. A good example of this is the result for  simultaneous diagonalization of symmetric bilinear forms given in \cite{MW}, where it was shown that two symmetric bilinear forms with associated matrices $\varphi_1$, $\varphi_2$ can be simultaneously diagonalized if and only if there exists a basis of eigenvectors for $\varphi_1^{-1}\varphi_2$.
\end{remark}

\begin{remark}\rm\label{remark:standar-form-hyperboloid}
The roots of the characteristic polynomial are invariant under affine transformations, since $\operatorname{det}(\lambda H+S)=0$ if and only if $\operatorname{det}(\lambda T^t HT+T^tST)= \operatorname{det}(T)^2\operatorname{det}(\lambda H+S)=0$ for any transformation $T$ with $\operatorname{det}(T)\neq 0$. Thus, although we are considering a general circular hyperboloid,  this can be rotated and translated into one which is centered at the origin and with $OZ$ axis. Therefore we are going to focuss exclusively on a circular hyperboloid given in standar form $\mathcal{H}$, as this restriction does not carry a loss of generality.
\end{remark}

\begin{remark}\label{remark:coordinates}\rm
As a rule, we are considering Cartesian coordinates with the center of the sphere at $(x_c,y_c,z_c)$. Hence the characteristic polynomial for $\mathcal{H}$ and $\mathcal{S}$ takes the form:
\[
f(\lambda)=\frac{\left(a^2+\lambda \right) g(\lambda)}{a^4 c^2}\,,
\]
where $g(\lambda)=-(a^2+\lambda)[(c^2-\lambda)(r^2+\lambda)+z_c^2\lambda]+\lambda(c^2-\lambda)(x_c^2+y_c^2).$

However, sometimes it will be convenient to change to cylindrical coordinates, so that $x_c=\rho_c \cos\theta_c$, $y_c=\rho_c \sin\theta_c$ and $z_c=z_c$. Hence $x_c^2+y_c^2=\rho_c^2$ and $z_c$ remains unchanged. In these coordinates $g$ is written as $g(\lambda)=-(a^2+\lambda)[(c^2-\lambda)(r^2+\lambda)+z_c^2\lambda]+\lambda(c^2-\lambda)\rho_c^2$. 

Note that cylindrical coordinates reflect the circular symmetry of the problem under consideration and this is made explicit in the fact that the coordinate $\theta$ does not appear in the expression of the characteristic polynomial.
Moreover, if we intersect $\mathcal{H}$ and $\mathcal{S}$ with the plane $\theta=constant$ that contains $(x_c,y_c,z_c)$, the problem becomes two-dimensional and one can recover the classification referred in \cite{liu2004} from the results in the next section.
\end{remark}

\section{Main results}\label{sect:main-results}

In this section we describe the global picture with all possible relative positions between $\mathcal{H}$ and $\mathcal{S}$. The following theorems provide the configuration of roots which corresponds to each relative position and vice versa, this is the relative position in terms of the roots of the characteristic polynomial. 

\smallskip

There are some particular cases that appear only under certain circumstances, namely that $a\leq r$ or that $c^2<a r$. These particular cases will be analyzed later. In the following theorem we describe the most general situation, without any further assumption on $r$, $a$ or $c$. For the sake of clarity the picture that represents each relative position between $\mathcal{H}$ and $\mathcal{S}$ is given by the intersection with the vertical plane that contains the OZ axis and the center $(x_c,y_c,z_c)$ of $\mathcal{S}$:

\begin{theorem}\label{th:th1}
Let $\mathcal{H}$ and $\mathcal{S}$ be a circular hyperboloid and a sphere. The following relative positions are in one to one correspondence with the configuration of roots showed in the following table.
\begin{center}
\begin{tabular}{|c|c|p{0.21\linewidth}|p{0.347\linewidth}|}
\hline
\multicolumn{4}{|c|}{General situation}\\
\hline
Type& Picture& Description & Roots ($\lambda_1=-a^2$, $\lambda_2$, $\lambda_3$, $\lambda_4$)\\
\hline
I&\vspace{-0.3cm} \includegraphics[scale=0.1]{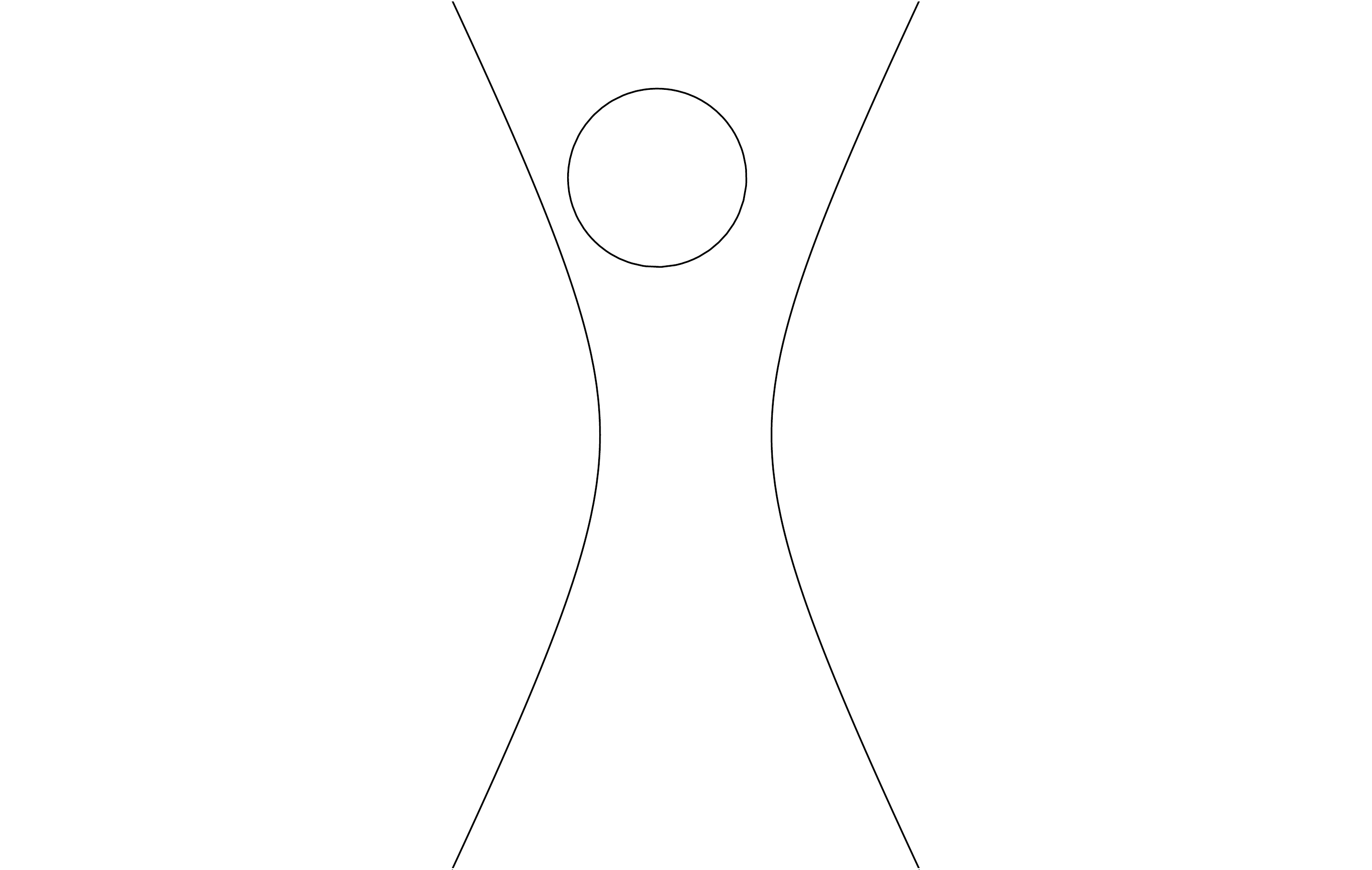}  & $\mathcal{S}$ interior to $\mathcal{H}$ & \vspace{-0.4cm} \begin{center} $-a^2\leq \lambda_2<\lambda_3<0$, $0<\lambda_4$\end{center}\\
\hline
E& \includegraphics[scale=0.1]{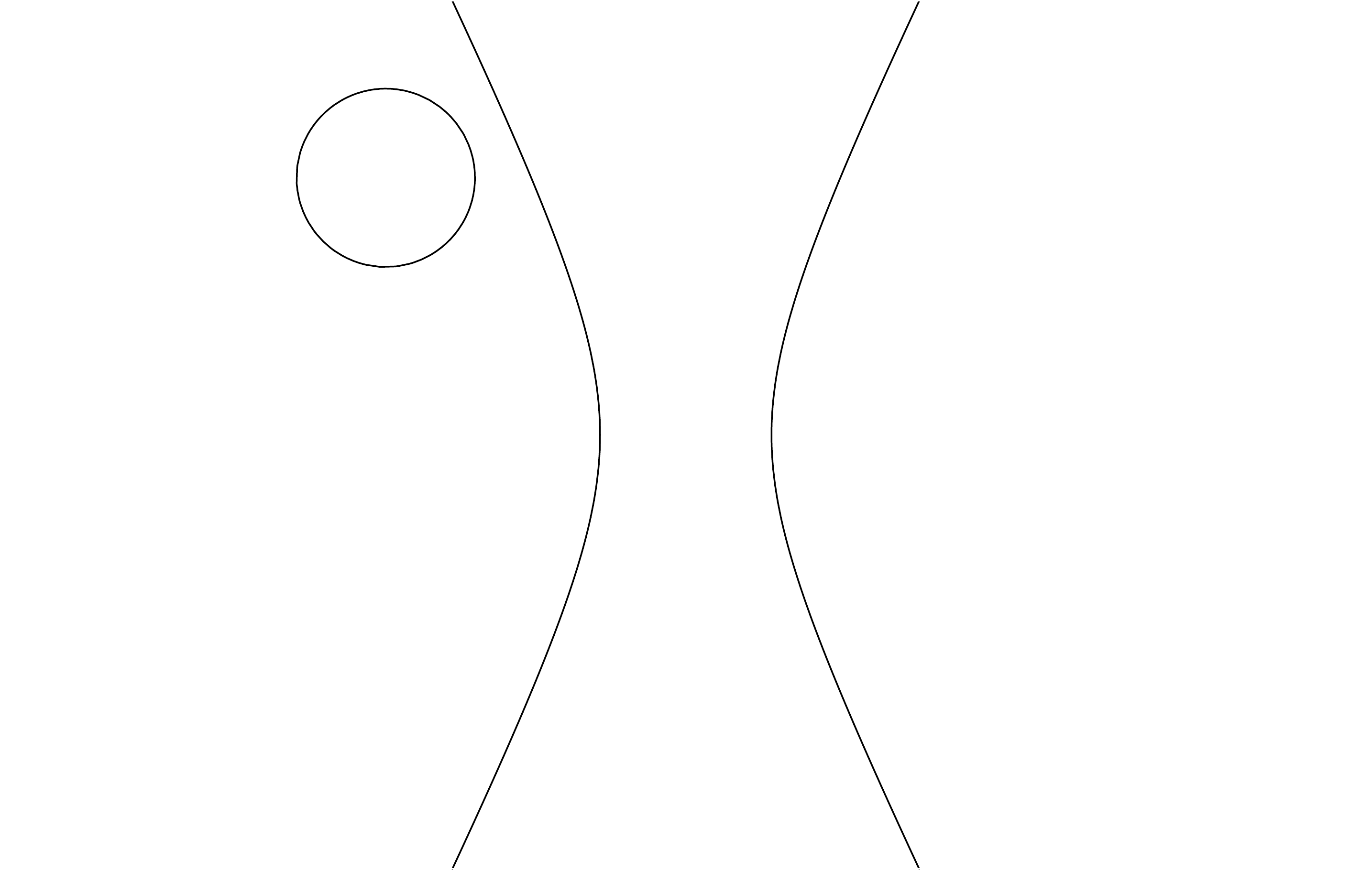}  & $\mathcal{S}$ exterior to $\mathcal{H}$ &
\vspace{-1.1cm}\begin{tabular}{c}
$0<\lambda_2<\lambda_3\leq c^2 <\lambda_4$\\
or\\  $0<\lambda_2<\lambda_3< c^2\leq\lambda_4$
\end{tabular}
\\
\hline
TI& \includegraphics[scale=0.1]{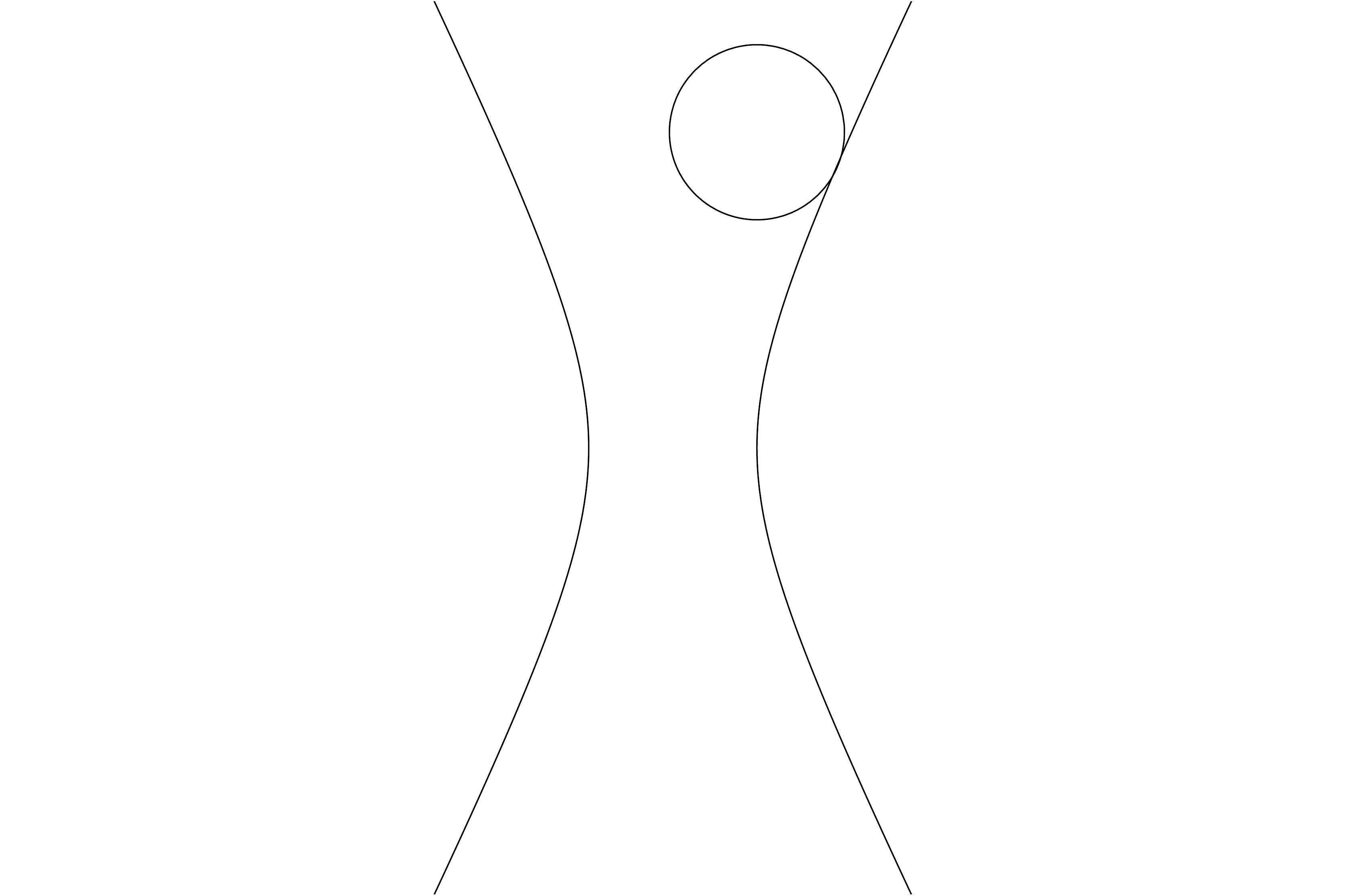}  &  \vspace{-1.1cm}\begin{tabular}{c}
$\mathcal{S}$ tangent  to $\mathcal{H}$\\
\small Center of $\mathcal{S}$\\
\small interior to $\mathcal{H}$
\end{tabular} & \vspace{-0.6cm} \begin{center} $-a^2<\lambda_2=\lambda_3<0<\lambda_4$\end{center}\\
\hline
TE& \includegraphics[scale=0.1]{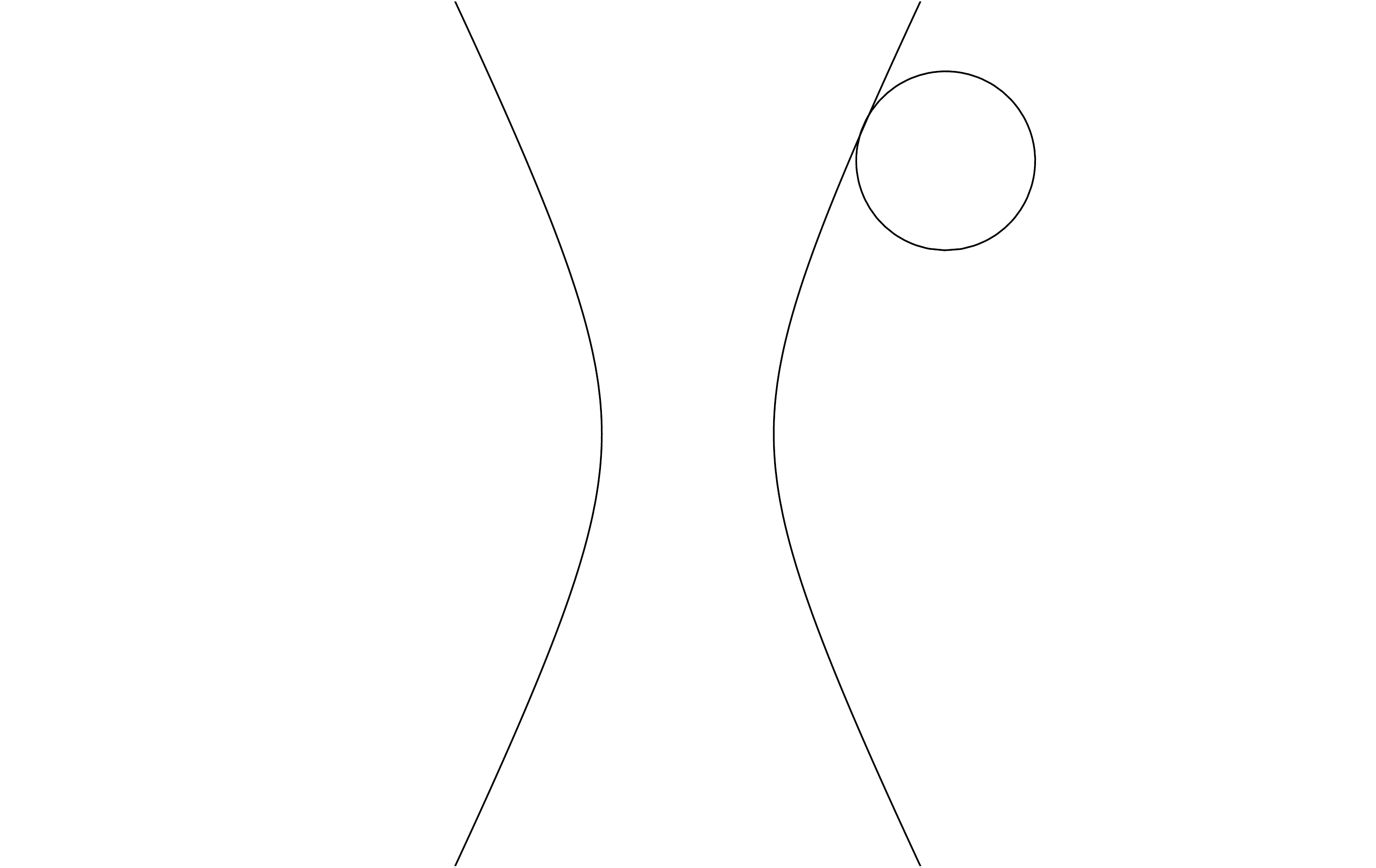}  &\vspace{-1.cm}\begin{tabular}{c} 
$\mathcal{S}$ tangent  to $\mathcal{H}$\\
\small Center of $\mathcal{S}$\\
\small exterior to $\mathcal{H}$
\end{tabular}& \vspace{-0.6cm} \begin{center} $0<\lambda_2=\lambda_3<\lambda_4$\end{center}\\
\hline
C& \includegraphics[scale=0.1]{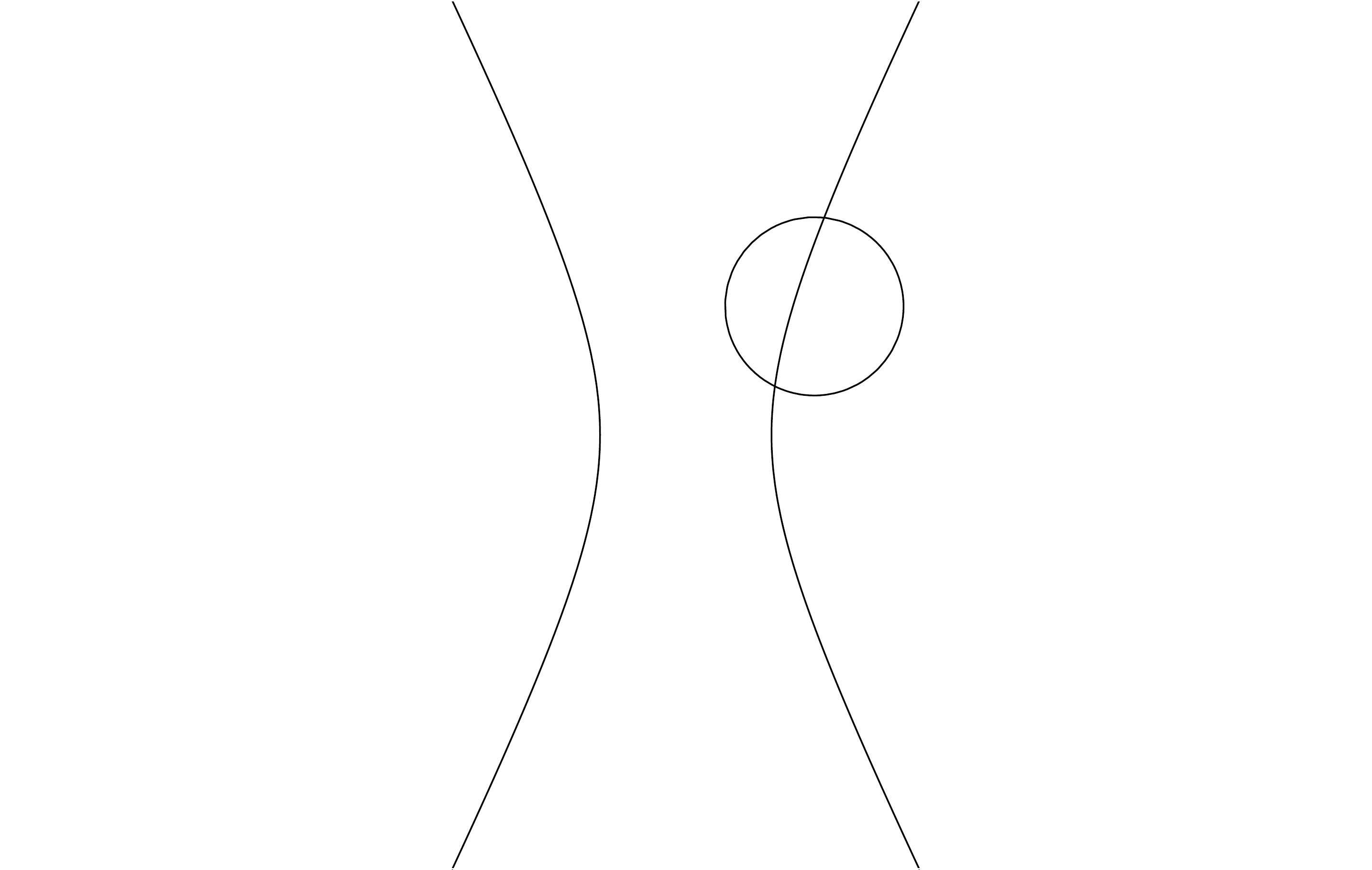}  &  
\vspace{-0.6cm}\begin{tabular}{c}
Non-tangent\\
contact
\end{tabular}
&\vspace{-0.6cm}\begin{center}  $\lambda_2=\bar\lambda_3\in\mathbb{C}$, $0<\lambda_4$\end{center}\\
\hline
\end{tabular}
\flushleft{{\bf Table 1.} General relative positions between $\mathcal{S}$ and $\mathcal{H}$.}
\end{center}
\end{theorem}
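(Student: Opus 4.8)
The plan is to exploit the factorization recorded in Remark~\ref{remark:coordinates}, namely $f(\lambda)=(a^2+\lambda)g(\lambda)/(a^4c^2)$, which isolates the root $\lambda_1=-a^2$ and reduces everything to the behaviour of the cubic $g$. First I would pin down the sign of $g$ at the three geometrically meaningful thresholds. A direct substitution gives $g(0)=-a^2c^2r^2<0$, $g(-a^2)=-a^2(a^2+c^2)\rho_c^2\le 0$ (vanishing exactly when the center lies on the $OZ$ axis), and $g(c^2)=-c^2(a^2+c^2)z_c^2\le 0$ (vanishing exactly when $z_c=0$); moreover $g$ is monic of degree three, so $g(\lambda)\to\pm\infty$ as $\lambda\to\pm\infty$. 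Because $g(0)<0$ and $g(+\infty)=+\infty$, there is always at least one positive real root; it is the root $\lambda_4$ (the largest) appearing in every row of the table. The remaining pair $\{\lambda_2,\lambda_3\}$ is then either a pair of real roots or a pair of complex conjugates, and the whole classification amounts to locating this pair with respect to the thresholds $-a^2$, $0$ and $c^2$.

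Next I would dispatch the two tangent rows. By the lemma relating tangency with the multiplicity of roots (Section~\ref{subsect:tangency-multiplicity}), $\mathcal{H}$ and $\mathcal{S}$ are tangent if and only if $g$ has a real double root, which forces $\lambda_2=\lambda_3$; this is exactly the situation of rows TI and TE. To separate them I would read off the sign of the double root: a negative value $-a^2<\lambda_2=\lambda_3<0$ corresponds to the center of $\mathcal{S}$ being interior to $\mathcal{H}$ (row TI) and a positive value to the center being exterior (row TE), the link between the sign of the root and the position of the center being supplied by the interior/exterior characterization of Section~\ref{subsect:characterization-relative-positions} together with the sign data $g(0)<0$, $g(-a^2)\le 0$.

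Then I would treat the three non-tangent rows. The complex case $\lambda_2=\bar\lambda_3\in\mathbb{C}$ is, by the above, incompatible with tangency and---by the non-contact characterization of Section~\ref{subsect:characterization-relative-positions}---incompatible with the sphere being entirely interior or exterior; hence it is precisely the non-tangent contact of row C, still accompanied by $\lambda_4>0$. When $\lambda_2,\lambda_3$ are real and distinct, I would use the characterization lemmas of Section~\ref{subsect:characterization-relative-positions} to identify ``$\mathcal{S}$ interior'' and ``$\mathcal{S}$ exterior'' and match them with the sign patterns. For the interior case the two extra roots lie in $[-a^2,0)$, which I would obtain by combining $g(-a^2)\le 0$, $g(0)<0$ and the location of the positive root, ruling out roots below $-a^2$; for the exterior case all three cubic roots are positive, and the position of the middle root relative to $c^2$ (giving the two sub-patterns $\lambda_3\le c^2<\lambda_4$ and $\lambda_3<c^2\le\lambda_4$) follows from $g(c^2)\le 0$ with equality exactly when $z_c=0$, so that the threshold $c^2$ is attained precisely in the symmetric sub-case. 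Throughout, the moving-sphere technique of Section~\ref{subsect:moving-sphere} is the tool that guarantees these patterns are stable on each connected family of configurations and that transitions between rows occur only across tangency (double-root) events.

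Finally I would assemble the bijection. The five root patterns of the table are mutually exclusive, and the five relative positions are mutually exclusive and, in the general situation, exhaust all possibilities; since each position has been shown to force its pattern, the correspondence is forced to be one-to-one and the converse direction follows at once. I expect the main obstacle to be the fine-grained localization of the real pair $\{\lambda_2,\lambda_3\}$ against the thresholds $-a^2$ and $c^2$---in particular proving that no root drops below $-a^2$ in the interior case and deciding the strict-versus-nonstrict alternative at $c^2$ in the exterior case. This is exactly the point where the purely algebraic sign count on $g$ has to be reinforced by the geometric characterization lemmas and by the continuity provided by the moving-sphere argument, rather than by the endpoint evaluations alone.
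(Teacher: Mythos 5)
Your overall architecture coincides with the paper's: prove, row by row, that each relative position forces its root pattern, and then let mutual exclusivity and exhaustiveness deliver the converse; your treatment of rows I and E via the non-contact characterization is exactly Lemma~\ref{lemma:nocontact}. However, your handling of row C runs the implication in the wrong direction, and this is a genuine gap. What you argue is that complex roots force non-tangent contact (complex roots are incompatible with tangency and, by Lemma~\ref{lemma:nocontact}, with non-contact). But the assembly step you invoke at the end (``each position has been shown to force its pattern'') requires the opposite implication: a non-tangent contact along a single connected curve must force complex roots. This is not automatic, since non-tangent contact can perfectly well occur with four real roots --- Types Ca and Cm of Tables 2 and 3 are precisely such positions --- so nothing in your sketch excludes that a row-C configuration produces, say, three positive simple roots, in which case pattern E would correspond to two distinct positions and the bijection would collapse. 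The paper devotes a substantive result to exactly this step, Lemma~\ref{lemma:contact-implies-complexroots}: the center of $\mathcal{S}$ is dragged along an explicit path to the reference positions of Example~\ref{example:complex-roots} (center at $(a,0,0)$ or $(r,0,0)$, where the roots are computed and are complex), keeping non-tangent contact throughout, and Lemma~\ref{lemma:complexroots} together with Theorem~\ref{th:non-tangency} then forbids a transition from real to complex roots along the path. Your proposal contains no substitute for this argument.

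The separation of rows TI and TE also needs repair. The sign of the double root together with the position of the center does not pin down the patterns: when $a\le r$ there are tangent positions with interior center whose double root satisfies $\lambda_2=\lambda_3<-a^2$ (Type Td) or for which $-a^2$ is a triple root (Type TIc), and when $c^2<ar$ there are tangent positions with exterior center and double root larger than $c^2$ (Type TEs2). What singles out TI and TE among all tangent positions is the absence of any further contact, a feature your sign count never uses; moreover, the lemmas of Section~\ref{subsect:characterization-relative-positions} that you cite for the ``link between the sign of the root and the position of the center'' say nothing of the sort --- they characterize non-contact and the topology of the intersection curve. The paper closes this exactly where you vaguely point, by continuity: a TI (resp.\ TE) position is reached as a continuous limit of Type I (resp.\ E) positions, and since the roots vary continuously, never vanish, and cannot become multiple or complex before the tangency instant, the double root is forced into $(-a^2,0)$ (resp.\ onto the positive axis). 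You correctly flag this localization as the main obstacle in your closing paragraph, but flagging it is not resolving it; the limiting argument from Types I and E is the missing mechanism.
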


\begin{remark}\rm
The table should be interpreted in the following way. The relative position of type $I$ corresponds to the case in which the sphere is interior to the hyperboloid and there is no contact between them, as the associated picture shows. Any hyperboloid and sphere that fits into this description will give rise to a characteristic polynomial whose roots are $\lambda_1=-a^2$, $\lambda_2$, $\lambda_3$ and $\lambda_4$ that satisfy $-a^2\leq \lambda_2<\lambda_3<0$ and $0<\lambda_4$. The converse also holds, this is, if  a hyperboloid and a sphere have a characteristic polynomial with roots $\lambda_1=-a^2$, $\lambda_2$, $\lambda_3$ and $\lambda_4$ satisfying $-a^2\leq \lambda_2<\lambda_3<0$ and $0<\lambda_4$, then the sphere is interior to the hyperboloid (without contact). All other types are interpreted in an analogous way.
\end{remark}

From a practical point of view and in relation with  possible applications of these results, it is likely that we will be considering the radious of the sphere smaller than $a$, so that the sphere can travel along the interior of the hyperboloid without touching the walls. Also, hyperboloids more widely used for practical applications usually satisfy the condition $ar<c^2$, which avoids situations like those described in Table~3 below.  Under these circumstances Theorem~\ref{th:th1} describes all possible relative positions between $\mathcal{H}$ and $\mathcal{S}$, so the global picture gets simpler and can be described as follows.

\begin{corollary}\label{co:rlessa}
Let $\mathcal{H}$ and $\mathcal{S}$ be a circular hyperboloid and a sphere as described above with $r<a$ and $ar<c^2$. Then there is contact between $\mathcal{H}$ and $\mathcal{S}$ if and only if the characteristic polynomial $f$ has complex roots or a double root $\lambda\neq -a^2$.

Moreover, complex roots correspond to non-tangent contact and a double root $\lambda\neq -a^2$ corresponds to tangent contact.
\end{corollary}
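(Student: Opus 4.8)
The plan is to obtain the corollary as a direct reading of Theorem~\ref{th:th1}, invoking the hypotheses $r<a$ and $ar<c^2$ only to ensure that Table~1 lists every possibility. First I would note that $r<a$ excludes the circumstance $a\leq r$ and that $ar<c^2$ excludes the circumstance $c^2<ar$; these are precisely the two situations that give rise to the additional configurations treated in Theorem~\ref{th:th2} and Theorem~\ref{th:th3}. Consequently, under the present hypotheses none of those special positions can occur, so the five mutually exclusive types $I$, $E$, $TI$, $TE$, $C$ of Table~1 exhaust all relative positions between $\mathcal{H}$ and $\mathcal{S}$.

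Next I would separate the five types according to the \emph{Description} column: by the definition of contact, the positions of type $I$ (sphere interior) and type $E$ (sphere exterior) are exactly those without a common point, whereas types $TI$, $TE$ and $C$ are exactly those exhibiting contact. Hence ``there is contact'' is equivalent to ``the position is of type $TI$, $TE$, or $C$'', and it only remains to recognize these three types through the roots listed in the last column of the table.

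From that column, type $C$ is the unique type with a complex conjugate pair $\lambda_2=\bar\lambda_3$, so it corresponds exactly to the presence of complex roots (the remaining roots $\lambda_1=-a^2$ and $\lambda_4$ being real). Types $TI$ and $TE$ are the types carrying a double real root $\lambda_2=\lambda_3$, located in $(-a^2,0)$ and in $(0,\infty)$ respectively, so in both cases the double root is different from $-a^2$. Conversely, in the non-contact types all four roots are real, and the strict inequalities in the table force them to be pairwise distinct except for the single degenerate possibility $\lambda_2=\lambda_1=-a^2$ occurring inside type $I$ (where $-a^2\leq\lambda_2$). Therefore the condition ``$f$ has complex roots or a double root $\lambda\neq -a^2$'' selects precisely the contact types $TI$, $TE$, $C$, which establishes the asserted equivalence. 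The \emph{moreover} statement then follows at once, since complex roots single out type $C$ (non-tangent contact) and a double root $\lambda\neq -a^2$ singles out types $TI$ and $TE$ (tangent contact).

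The step I expect to be most delicate is the bookkeeping around the ever-present root $\lambda_1=-a^2$: one must confirm that a genuine double root located exactly at $-a^2$ arises only within the non-contact type $I$, and hence is correctly ruled out by the qualifier $\lambda\neq -a^2$, while every double root produced by the contact types $TI$ and $TE$ is strictly separated from $-a^2$. Checking that these two families of double roots cannot overlap is the crux; the rest of the argument is a straightforward transcription of the classification already packaged in Table~1.
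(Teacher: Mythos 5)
Your proof is correct, but it follows a genuinely different route from the paper's. You treat Theorem~\ref{th:th1} (together with the exhaustiveness of Table~1 under the hypotheses $r<a$, $ar<c^2$) as a black box and obtain the corollary by pure case-inspection of the table: the contact types TI, TE, C are exactly those whose root configurations show complex roots or a real double root distinct from $-a^2$, while the non-contact types I, E admit at worst a double root located exactly at $-a^2$ (inside type I). The paper instead re-derives both implications from the underlying technical machinery: for the forward direction it invokes Lemma~\ref{lemma:contact-implies-complexroots} (non-tangent connected contact yields complex roots) and Theorem~\ref{th:non-tangency} (tangency yields a multiple root $\lambda\neq-a^2$ or a triple root $-a^2$), then rules out the triple-root alternative explicitly via Lemma~\ref{lemma:roota}, which forces $a\leq r$; for the converse it uses the contrapositive of Lemma~\ref{lemma:nocontact} together with Theorem~\ref{th:non-tangency}. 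Both arguments rest on the same unproved-in-the-proof assertion, stated just before the corollary and repeated as the opening line of the paper's proof, that under these hypotheses Theorem~\ref{th:th1} lists \emph{all} relative positions; given that, your reading of the table is complete and the delicate point you flag (that a double root sitting exactly at $-a^2$ only arises in the non-contact type I, and that type C carries no real multiple root) is handled correctly. What your route buys is economy and the observation that the corollary is a formal repackaging of Theorem~\ref{th:th1}; what the paper's route buys is an explicit, lemma-level explanation of \emph{why} the excluded degeneracies (in particular the triple root $-a^2$, i.e.\ tangency along a circumference) are incompatible with $r<a$, rather than having that fact absorbed silently into the exhaustiveness claim.
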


\begin{remark}\rm\label{remark:Cardano}
	In the hypothesis of Corollary~\ref{co:rlessa}, contact is detected by the existence of complex or multiple roots of the characteristic polynomial. As we mentioned in Remark~\ref{remark:coordinates}, $f$ can be written as $f(\lambda)=\frac{a^2+\lambda}{a^4c^2}  g(\lambda)$, where $g(\lambda)$ is a polynomial of degree three (see Lemma~\ref{lemma:tecnical-results-1} for details); thus Cardano's formulas can be applied to $g(\lambda)$ to detect contact between $\mathcal{H}$ and $\mathcal{S}$ by a simple calculation as follows. In general, for a monic polynomial $x^3+a_2 x^2+a_1 x+a_0$, the quantities $Q=(3a_1-a_2^2)/9$ and $R=(9a_2a_1-27a_0-2a_1^3)/54$ are defined so that $\Delta=Q^3+R^2$ detects complex and multiple roots (see, for instance,  \cite{Birkhoff}). Thus, a direct analysis of the coefficients of the polynomial $p(\lambda)$ gives the value for $\Delta$ and detects contact between $\mathcal{S}$ and $\mathcal{H}$ as follows:
	\begin{enumerate}
		\item if $\Delta>0$ then there is non-tangent contact,
		\item if $\Delta=0$ then there is tangent contact, and
		\item if $\Delta<0$ then there is no contact.
	\end{enumerate}
\end{remark}
\noindent{\bf Example.} \it We present a specific example of how to apply Theorem~\ref{th:th1}, Corollary~\ref{co:rlessa} and Remark~\ref{remark:Cardano}. Suppose we are given a hyperboloid and a sphere with equations
\[
\mathcal{H}:\frac{x^2}{2.25}+\frac{y^2}{2.25}-\frac{z^2}{2.56}=1\, \text{ and } \mathcal{S}:(x-2.1)^2+(y-2.2)^2+(z-0.3)^2=1.96\,.
\] 
Using the notation of Section~\ref{subsect:setting-context}, we have that  $a=1.5$ and $c=1.6$ specify the hyperboloid and the sphere has radious $r=1.4$ and center at $(2.1,2.2,0.3)$.

In this case the characteristic polynomial has the expression
\[
f(\lambda)=0.0771605 \lambda^4-0.419753\lambda^3-0.0148611\lambda^2+2.09936 \lambda-1.96,
\] 
so $g(\lambda)=\lambda^3-7.69\lambda^2+17.1099\lambda-11.2896$. Since $r<a$ and $ar<c^2$, we apply Corollary~\ref{co:rlessa} and follow Remark~\ref{remark:Cardano} to compute the value $\Delta=-0.340702<0$. Thus we conclude that there is no contact between $\mathcal{H}$ and $\mathcal{S}$. 
If, moreover, we are interested in knowing the specific relative position between the hyperboloid and the sphere, we compute the roots of $f$. We already know that $\lambda_1=-a^2=-2.25$ is a root and we compute the roots of the third order polynomial $g$ to find the other three roots: 
\[
\lambda_2=1.23656,\quad \lambda_3=2.09451 \,\text{ and }\,\lambda_4=4.35893. 
\]
Now we check Table~1 to see that this root configuration corresponds to Type E, this is, $\mathcal{S}$ is exterior to $\mathcal{H}$.\rm 

\medskip
One of the particular situations which deserves special attention is that in which the sphere $\mathcal{S}$ is large in comparison with $\mathcal{H}$, so that it cannot `travel' all along the interior of the hyperboloid without touching it. This happens when $a\leq r$ and the extra relative positions, appart from those already characterized in Theorem~\ref{th:th1}, are described in the following theorem.

\begin{theorem}\label{th:th2}
Let $\mathcal{H}$ and $\mathcal{S}$ be a circular hyperboloid and a sphere such that $a\leq r$. The following relative positions are in one to one correspondence with the configuration of roots showed in the following table.
\begin{center}
\begin{tabular}{|c|c|p{0.24\linewidth}|c|}
\hline
\multicolumn{4}{|c|}{Particular case: $a\leq r$}\\
\hline
Type& Picture& Description & Roots ($\lambda_1=-a^2$, $\lambda_2$, $\lambda_3$, $\lambda_4$)\\
\hline
TIc& \includegraphics[scale=0.1]{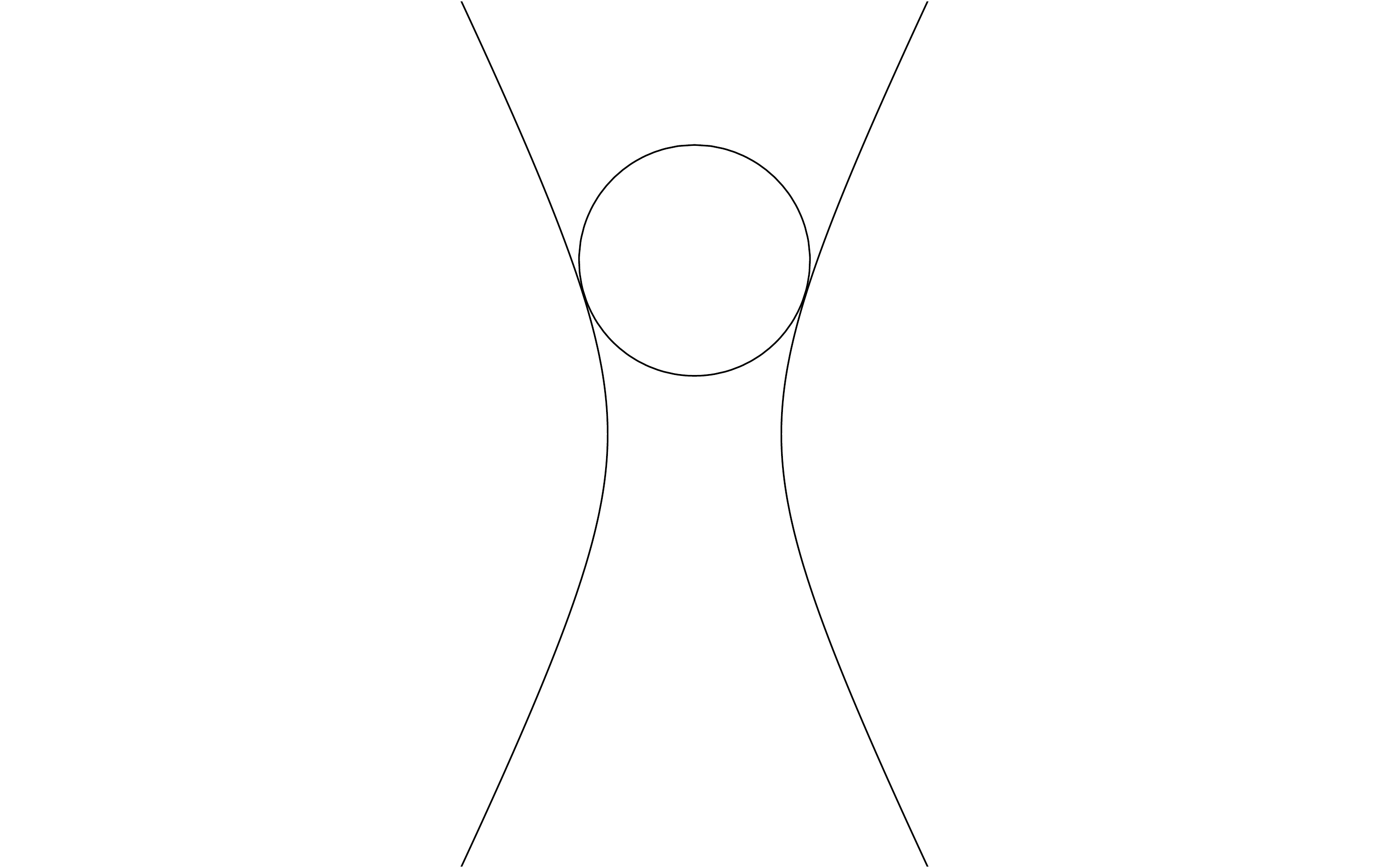}  &\vspace{-0.8cm} Tangency along a circumference
&  $\lambda_2=\lambda_3=-a^2$, $0<\lambda_4$\\
\hline
Td& \includegraphics[scale=0.1]{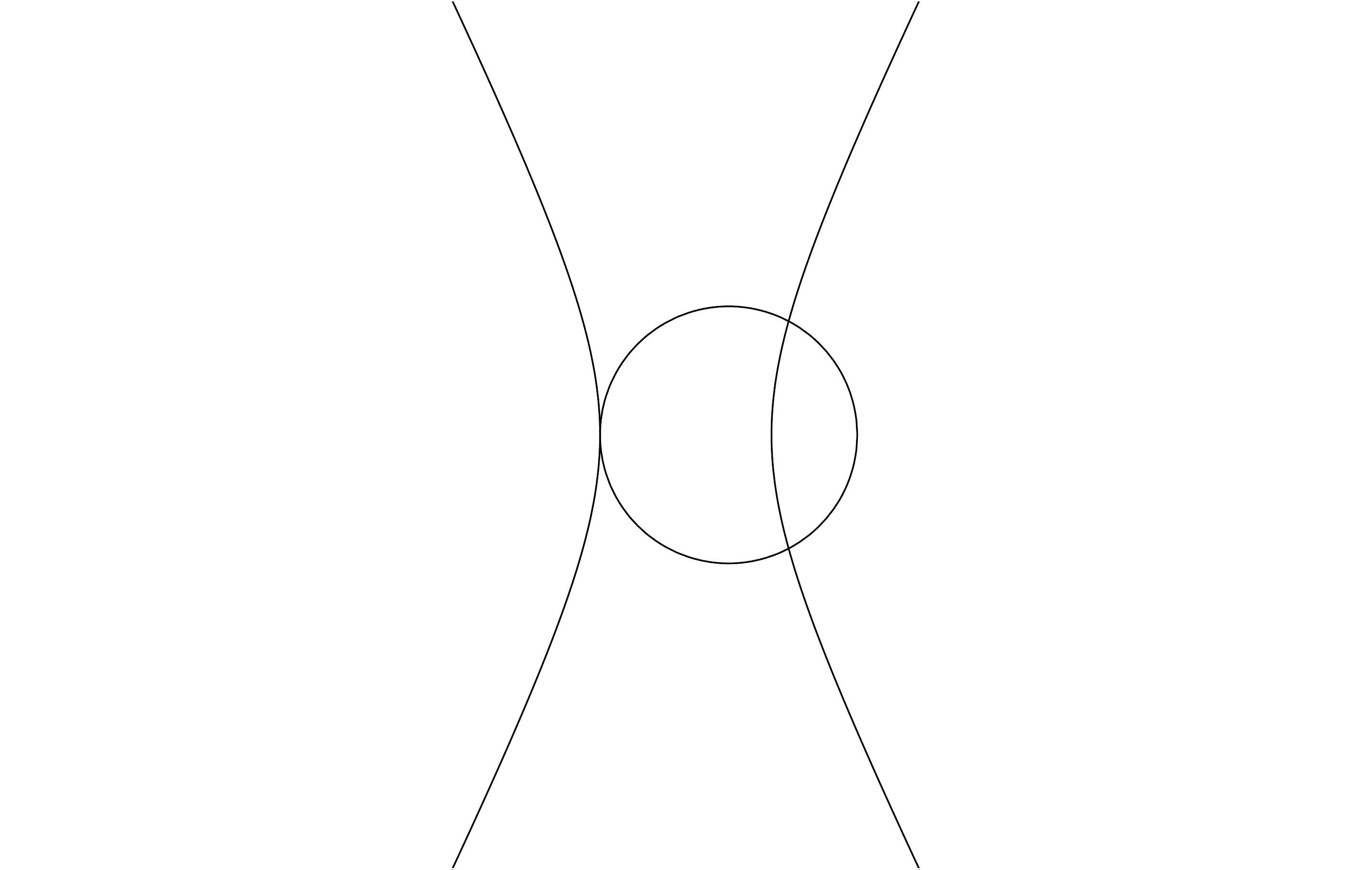}  &\vspace{-0.8cm}  Tangent and non-tangent contact
&  $\lambda_2=\lambda_3<-a^2$, $0<\lambda_4$\\
\hline
Ca& \includegraphics[scale=0.1]{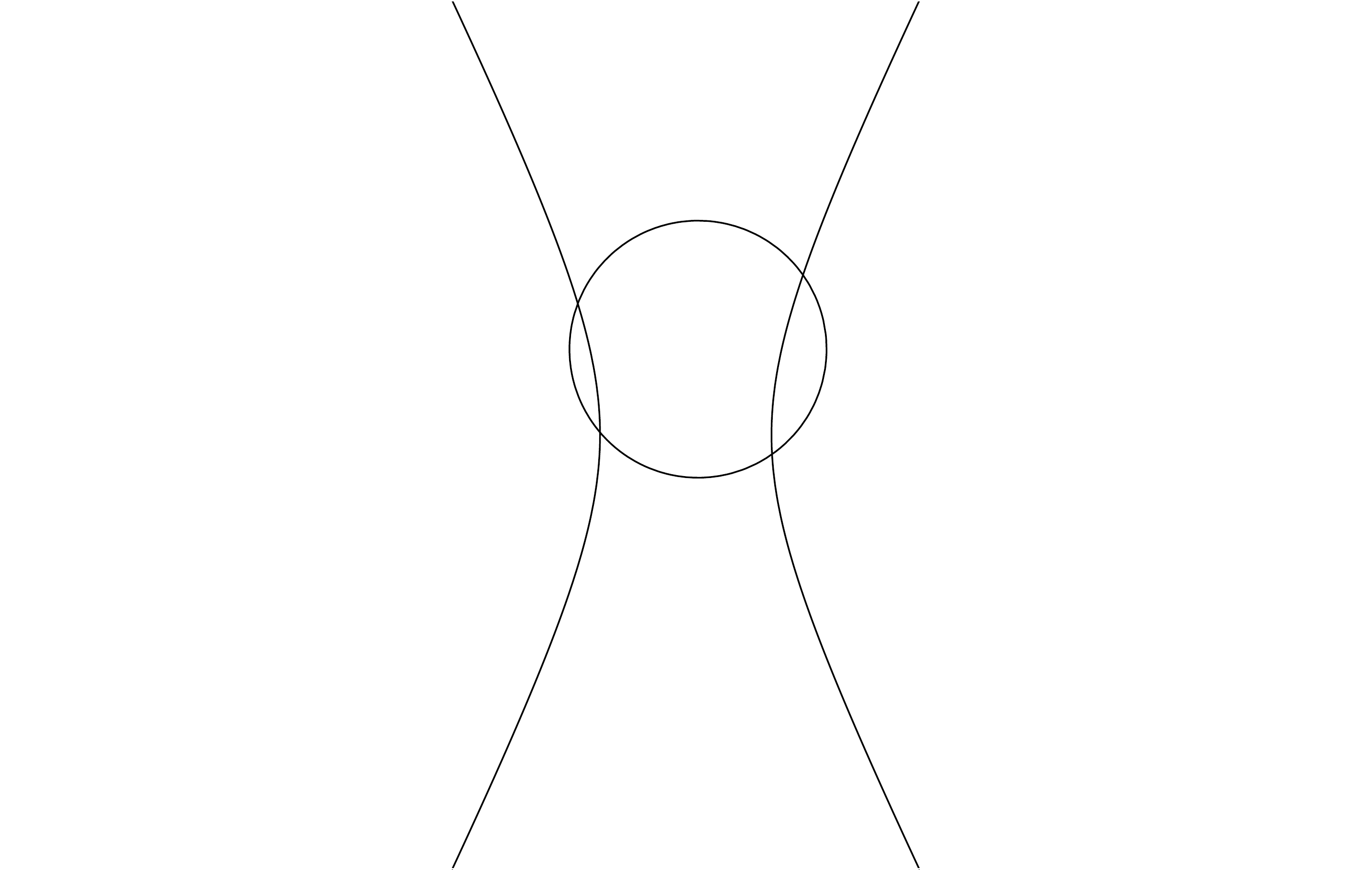}  &  \vspace{-0.8cm} Non-tangent double contact
&  $\lambda_2<\lambda_3\leq-a^2$, $0<\lambda_4$\\
\hline
\end{tabular}
\flushleft{{\bf Table 2.} Extra relative positions between $\mathcal{S}$ and $\mathcal{H}$ if $a\leq r$.}
\end{center}
\end{theorem}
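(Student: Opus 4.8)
The plan is to combine the explicit factorization of the characteristic polynomial with the rotational symmetry of the configuration and the moving-sphere continuity technique. First I would invoke the invariance of the roots (Remark~\ref{remark:standar-form-hyperboloid}) together with the symmetry about the $OZ$ axis to place the center of $\mathcal{S}$ at $(\rho_c,0,z_c)$ with $\rho_c,z_c\geq 0$. Writing $f(\lambda)=\frac{(a^2+\lambda)g(\lambda)}{a^4c^2}$ as in Remark~\ref{remark:coordinates} and Lemma~\ref{lemma:tecnical-results-1}, a direct evaluation gives $g(-a^2)=-a^2(c^2+a^2)\rho_c^2$, so that $\lambda_1=-a^2$ is a \emph{double} root of $f$ precisely when $\rho_c=0$, i.e. when $\mathcal{S}$ is centered on the axis. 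I would then observe that each pattern in Table~2 forces two of the free roots to lie in $(-\infty,-a^2]$; the on-axis computation below shows this requires $r\geq a$, which is also why these patterns are disjoint from those of Theorem~\ref{th:th1} and record exactly the extra positions.

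The axis-centered case ($\rho_c=0$) is exact and I would settle it first. Here $g(\lambda)=-(a^2+\lambda)h(\lambda)$ with $h(\lambda)=(c^2-\lambda)(r^2+\lambda)+z_c^2\lambda$, so $f=\frac{-(a^2+\lambda)^2h(\lambda)}{a^4c^2}$ and $-a^2$ is automatically a double root. Since $h$ has leading term $-\lambda^2$ and constant term $c^2r^2>0$, it has one positive root $\lambda_4$ and one negative root $\mu$. Reading off the sign of $h(-a^2)=(c^2+a^2)(r^2-a^2)-a^2z_c^2$ then sorts the configuration: $h(-a^2)>0$ gives $\mu<-a^2$, hence $\lambda_2=\mu<\lambda_3=-a^2<\lambda_4$ (pattern Ca with $\lambda_3=-a^2$); $h(-a^2)=0$ gives the triple root $\mu=-a^2$ (pattern TIc); and $h(-a^2)<0$ gives $\mu\in(-a^2,0)$, i.e. type I of Theorem~\ref{th:th1}. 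Because $\mathcal{S}$ and $\mathcal{H}$ are now both surfaces of revolution about $OZ$, their intersection and tangency loci are unions of circles, so analyzing the meridian section (a circle against the hyperbola) shows that $h(-a^2)=0$ is exactly the section's tangency, which sweeps the tangent circumference of TIc, while $h(-a^2)>0$ produces two intersection circles, the non-tangent double contact Ca.

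For the off-axis case ($\rho_c>0$) I would exploit a clean sign count together with continuity. Since $g(\lambda)\to-\infty$ as $\lambda\to-\infty$ and $g(-a^2)=-a^2(c^2+a^2)\rho_c^2<0$, the number of roots of $g$ in $(-\infty,-a^2)$ is even, hence $0$ or $2$; moreover no free root can cross $-a^2$ while $\rho_c>0$, since $g(-a^2)\neq 0$ there. Two simple roots below $-a^2$ is pattern Ca ($\lambda_2<\lambda_3<-a^2$); their confluence into a double root $\lambda_2=\lambda_3<-a^2$ is pattern Td; and the absence of such roots leaves the remaining free roots either in $(-a^2,\infty)$ (a Theorem~\ref{th:th1} type) or complex (type C). To attach the geometry I would run the moving-sphere argument of Section~\ref{subsect:moving-sphere} anchored at the axis-centered Ca base case: the implicit function theorem applied to $g$ at $\lambda=-a^2$ (where $g'(-a^2)=-h(-a^2)<0$ and $\partial_{\rho_c^2}g=-a^2(c^2+a^2)<0$ there) shows the root at $-a^2$ leaves downward, so the double contact with two components persists off-axis by the two-component lemma of Section~\ref{subsect:characterization-relative-positions}; the two roots then merge, and by the tangency/multiplicity results of Section~\ref{subsect:tangency-multiplicity} the double root $<-a^2$ certifies the tangency at which the two components touch on the meridian plane while meeting non-tangentially elsewhere, which is exactly Td, after which the roots go complex and the contact becomes the single-component type C.

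The main obstacle will be the off-axis geometric identification: converting the algebraic facts ``two real roots below $-a^2$'' and ``a double root below $-a^2$'' into the topological claims ``exactly two components'' and ``simultaneous tangent and non-tangent contact.'' This needs the moving-sphere continuity to guarantee that the number of components changes only across a tangency (a double root), the tangency/multiplicity lemma to certify that the colliding double root is genuinely a point of tangency, and the axis-centered case to supply a base configuration with a manifest pair of circles. Once these are in place, the three patterns of Table~2 are mutually exclusive and, together with those of Theorem~\ref{th:th1}, exhaust all configurations with $a\leq r$, which delivers the claimed one-to-one correspondence in both directions.
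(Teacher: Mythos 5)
Your proposal is correct and follows essentially the same route as the paper: your axis-centered sign analysis of $h(-a^2)$ reproduces Corollary~\ref{coro:rootatriple} and Lemma~\ref{lemma:eixoz}, your meridian-discriminant identification of TIc is exactly Lemma~\ref{lemma:rootatangent}, and your off-axis treatment of Ca and Td rests on the same two-component lemma (Lemma~\ref{lemma:contact-2connectedcomponents}), the tangency-implies-multiple-root result (Lemma~\ref{lemma:tangent-implies-multipleroot}) and the moving-sphere continuity argument used in the paper's proof. The concluding step---necessity of each root pattern plus mutual exclusivity and exhaustiveness yielding the one-to-one correspondence---is also the paper's global strategy, so the two proofs coincide in substance.
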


Another particular situation which requires special attention is that in which $c^2< ar$, as this condition allows new relative positions with new configurations of roots for the characteristic polynomial. These are described as follows. 

\begin{theorem}\label{th:th3}
Let $\mathcal{H}$ and $\mathcal{S}$ be a circular hyperboloid and a sphere, respectively,  such that $c^2< ar$. The following relative positions are in one to one correspondence with the configuration of roots showed in the following table. 
\begin{center}
\begin{tabular}{|c|c|p{0.22\linewidth}|c|}
\hline
\multicolumn{4}{|c|}{Particular case: $c^2< ar$}\\
\hline
Type& Picture& Description & Roots ($\lambda_1=-a^2$, $\lambda_2$, $\lambda_3$, $\lambda_4$)\\
\hline
TEs& \includegraphics[scale=0.1]{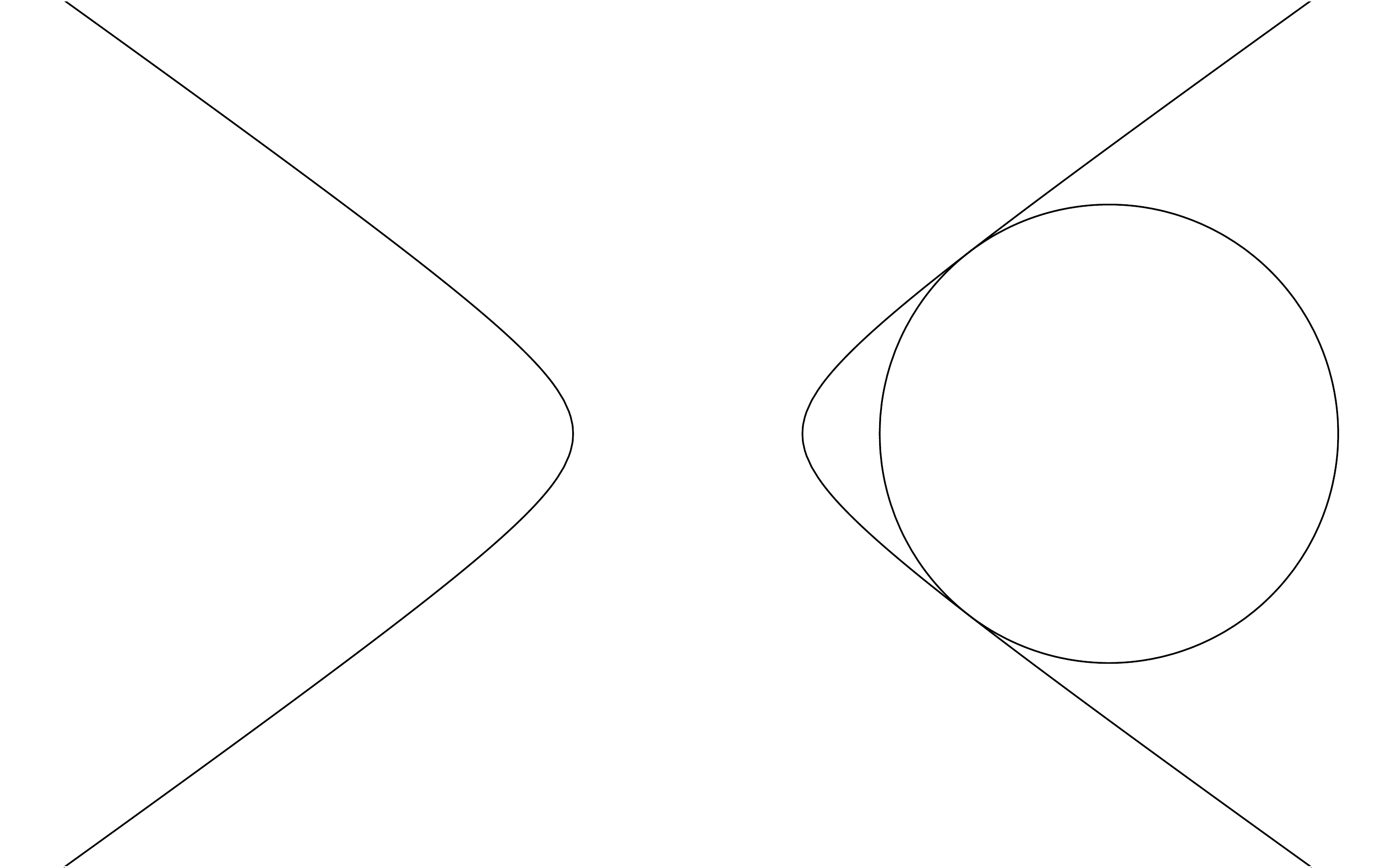}   &\vspace{-0.8cm} Exterior double tangency &  $ 0< \lambda_2=\lambda_3=c^2<\lambda_4$\\
\hline
TEs1& \includegraphics[scale=0.1]{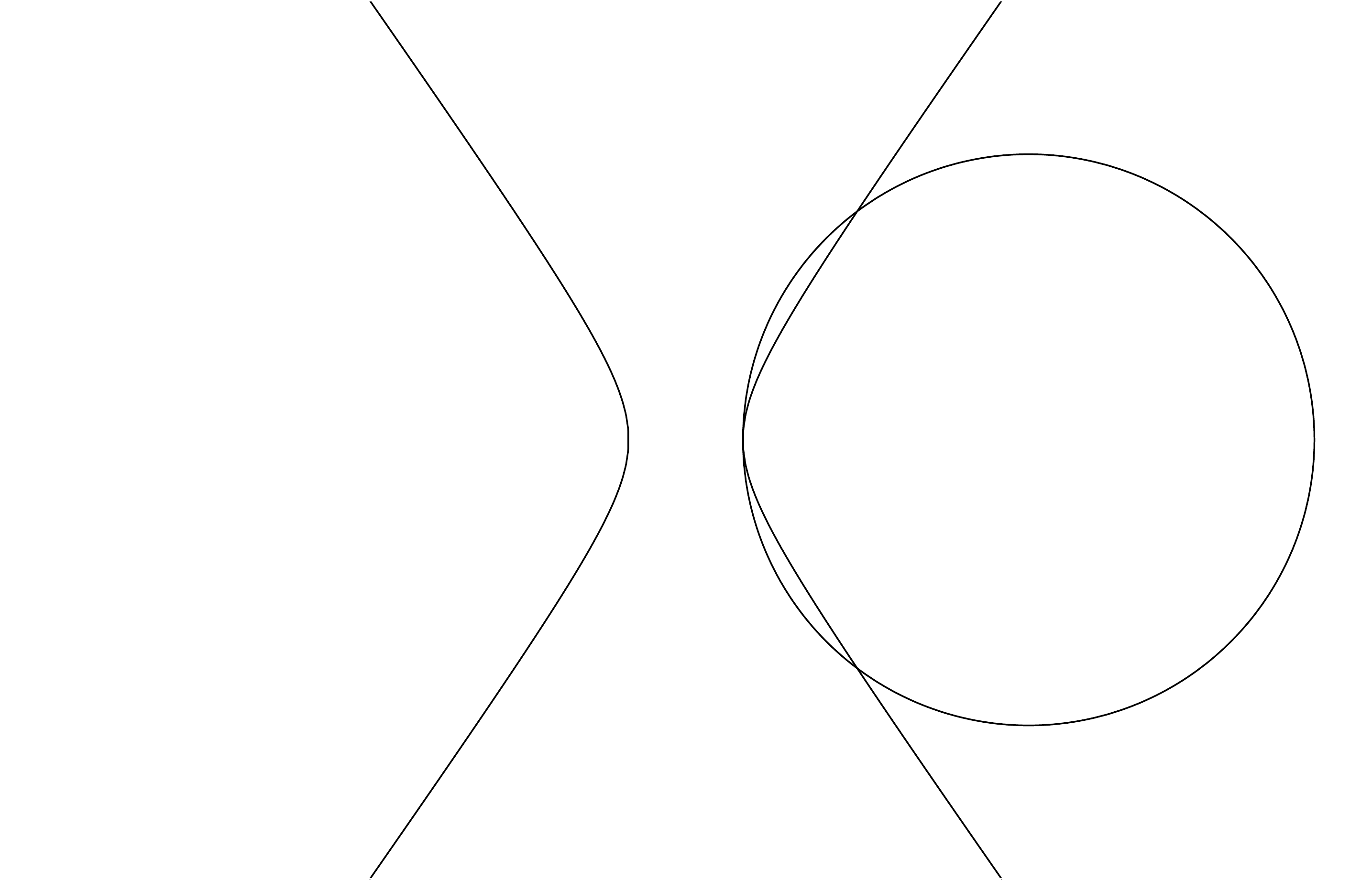}  &\vspace{-0.9cm} Exterior tangency and extra non-tangent contact
& $ 0< \lambda_2=c^2<\lambda_3=\lambda_4=ar$\\
\hline
TEs2&  \includegraphics[scale=0.08]{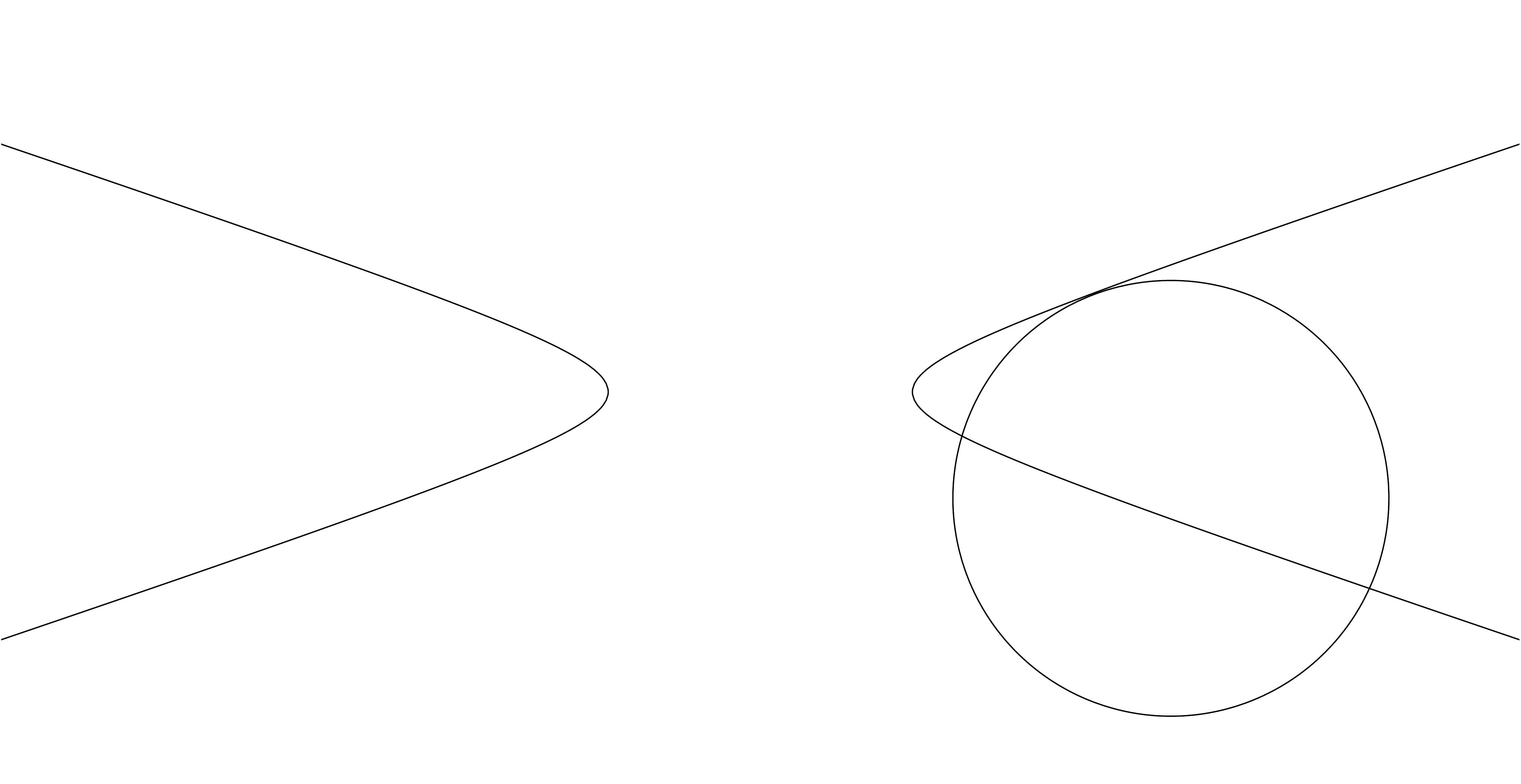} &  \vspace{-0.9cm} Exterior tangency and extra non-tangent contact 
& $0<c^2< \lambda_2=\lambda_3<\lambda_4$\\
\hline
Cm& \includegraphics[scale=0.09]{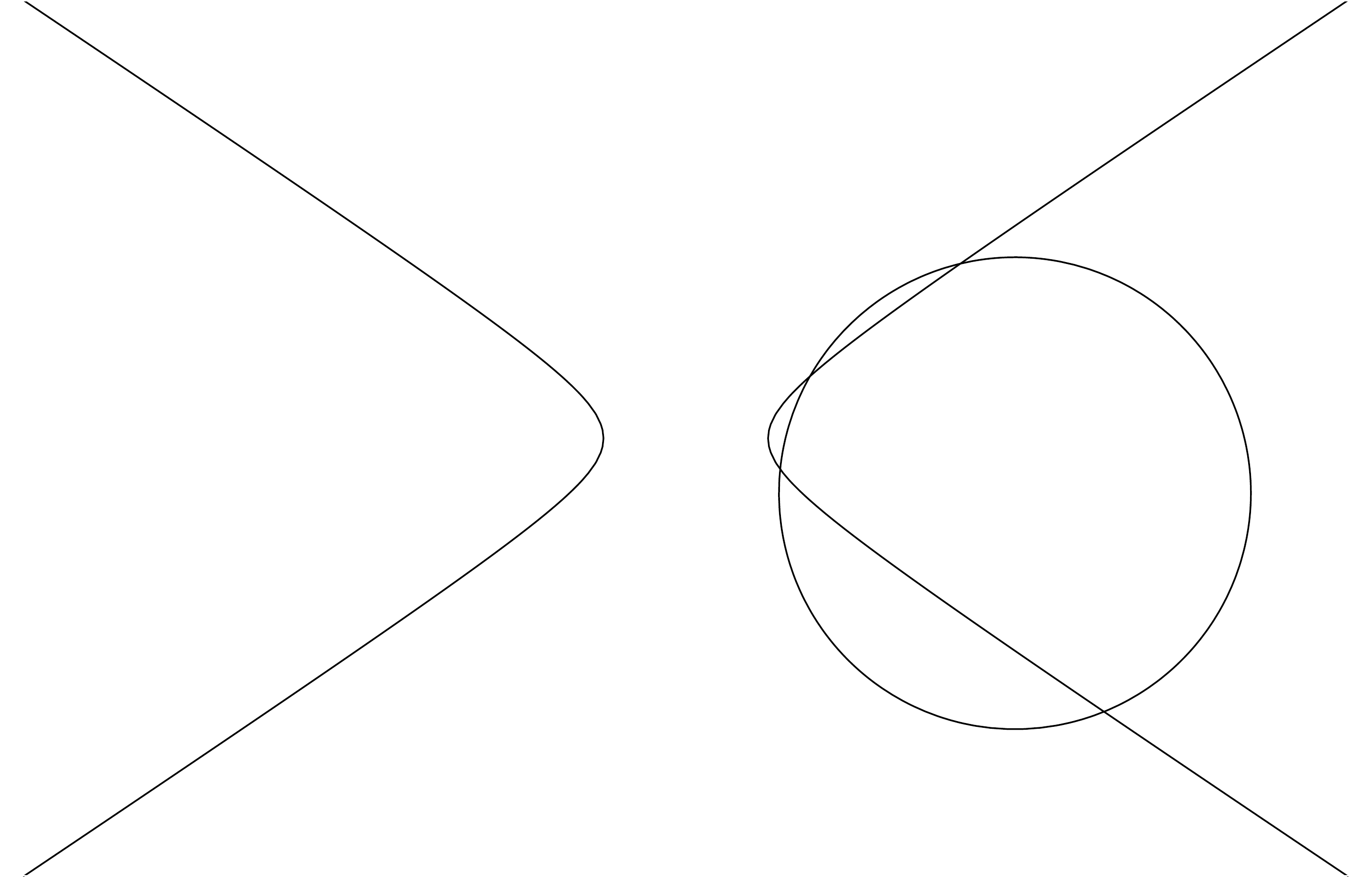}  &
\vspace{-0.8cm} Multiple contact without tangency
& $0<c^2\leq \lambda_2<\lambda_3<\lambda_4$\\
\hline
\end{tabular}
\flushleft{{\bf Table 3.} Extra relative positions between $\mathcal{S}$ and $\mathcal{H}$ if $c^2<a r$.}
\end{center}
\end{theorem}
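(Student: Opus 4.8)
The plan is to analyze the cubic factor $g$ of the characteristic polynomial $f(\lambda)=\frac{(a^2+\lambda)g(\lambda)}{a^4c^2}$ (Lemma~\ref{lemma:tecnical-results-1}) and to match each configuration of its roots with a relative position, establishing both implications of the claimed one to one correspondence. Throughout I would rely on the correspondence between tangency and multiple roots developed in Section~\ref{subsect:tangency-multiplicity}: a tangent contact forces a double root $\lambda\neq -a^2$, so the tangent types TEs, TEs1, TEs2 must carry a repeated root of $g$, while the purely transversal type Cm must have three distinct real roots. The hypothesis $c^2<ar$ is what makes the new configurations geometrically realizable, and the first task is to explain why the two threshold values $c^2$ and $ar$ appear.

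First I would locate the roots of $g$ by evaluating it at the distinguished points. A direct computation gives
\[
g(0)=-a^2c^2r^2<0,\qquad g(-a^2)=-a^2(a^2+c^2)\rho_c^2\le 0,\qquad g(c^2)=-(a^2+c^2)c^2z_c^2\le 0,
\]
so that $\lambda=c^2$ is a root of $g$ precisely when the centre of $\mathcal{S}$ lies on the waist plane $z_c=0$. In that symmetric situation $g$ factors as $g(\lambda)=(c^2-\lambda)\,q(\lambda)$ with
\[
q(\lambda)=-\lambda^2+(\rho_c^2-a^2-r^2)\lambda-a^2r^2,
\]
whose discriminant equals $\bigl(\rho_c^2-(a+r)^2\bigr)\bigl(\rho_c^2-(a-r)^2\bigr)$ and whose roots have product $a^2r^2$. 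Hence the repeated root of $q$ occurs exactly at $\rho_c=a+r$, where it equals $\lambda=ar$; this identifies the value $ar$ in type TEs1 and shows that the hypothesis $ar>c^2$ is precisely the condition placing this double root above $c^2$. Similarly, a double root at $c^2$ (type TEs) forces $c^2$ to be a root of $q$ as well, which pins down $\rho_c$ and yields the companion root $a^2r^2/c^2>c^2$ as $\lambda_4$, again using $ar>c^2$.

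With the thresholds understood, I would run the continuity argument of Section~\ref{subsect:moving-sphere}: starting from an exterior, contact-free configuration (type E of Theorem~\ref{th:th1}) I deform $\mathcal{S}$ along a path and track the roots of $g$, which move continuously. Real roots can merge or split only at a tangency, by Section~\ref{subsect:tangency-multiplicity}, and the passage of a root through $c^2$ signals the centre crossing the waist plane. Combining this with the characterizations of non-contact and of a two-component intersection curve from Section~\ref{subsect:characterization-relative-positions}, I would read off the four cases: TEs as the symmetric double tangency ($z_c=0$, double root at $c^2$); TEs1 as the transition $\rho_c=a+r$ (roots $c^2$ and the double value $ar$); TEs2 as a double root strictly above $c^2$ accompanied by one transversal intersection; and Cm as three distinct real roots above $c^2$, corresponding to a transversal intersection with two connected components. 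Conversely, each listed root pattern forces the stated position through the same lemmas, which yields the bijection.

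The main obstacle is the converse direction together with the fine separation of TEs2 from Cm and of TEs1 from TEs2. Algebraically these differ only by whether a repeated root sits exactly at $ar$, strictly above $c^2$, or is absent, and by the order of the roots relative to $c^2$; translating these numerical distinctions into the correct count and nature (tangent versus transversal) of the contact points is where the geometric input is indispensable. In particular, showing that a double root strictly greater than $c^2$ genuinely produces an exterior tangency plus an extra transversal contact, rather than a doubly tangent or a fully transversal configuration, requires the two-component analysis of Section~\ref{subsect:characterization-relative-positions}; reconciling that topological information with the algebraic root bounds is the crux of the proof.
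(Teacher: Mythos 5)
Your algebraic groundwork for the necessity half is essentially correct and runs on the same rails as the paper: the factorization $g(\lambda)=(c^2-\lambda)q(\lambda)$ when $z_c=0$, the discriminant $\bigl(\rho_c^2-(a+r)^2\bigr)\bigl(\rho_c^2-(a-r)^2\bigr)$ and the product $a^2r^2$ of the roots of $q$ reproduce the content of Lemmas~\ref{lemma:rootc}, \ref{lemma:tangency-rootc}, \ref{lemma:TEs1} and \ref{lemma:planoxy}, and your use of the tangency--multiple-root correspondence (Section~\ref{subsect:tangency-multiplicity}), of continuity of the roots under a moving sphere (Section~\ref{subsect:moving-sphere}), and of the two-component analysis (Section~\ref{subsect:characterization-relative-positions}) is exactly how the paper establishes each row of Table~3. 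One small imprecision: the discriminant of $q$ also vanishes at $\rho_c=|a-r|$, where the double root is $-ar<0$ (an interior tangency, type TI of Table~1), so the repeated root of $q$ does not occur \emph{exactly} at $\rho_c=a+r$; it does among the exterior positions relevant here, and that is what you need.

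The genuine gap is the converse implication. You assert that ``each listed root pattern forces the stated position through the same lemmas,'' give no argument for it, and then concede that precisely this direction --- separating TEs2 from Cm, TEs1 from TEs2, tangent from transversal contact starting from the numbers alone --- is the unresolved ``crux.'' As it stands your proposal proves only one of the two implications claimed in the theorem. The paper never performs that hard translation from root patterns back to geometry; it sidesteps it with the meta-argument stated at the beginning of Section~\ref{sect:proofs-ths}: the relative positions listed across Tables~1, 2 and~3 are mutually exclusive and exhaust all possible positions of $\mathcal{H}$ and $\mathcal{S}$, and the associated root configurations are pairwise incompatible. Hence, once necessity is proved for every row, sufficiency is automatic: an observed configuration of roots must come from one of the listed positions, necessity assigns that position its own configuration, and incompatibility of the configurations forces it to be the row you observed. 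Adding this one observation closes your proof and makes the geometric ``reconciliation'' you were worried about unnecessary; without it, the one-to-one correspondence is not established.
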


\begin{remark}\rm\label{remark:relationarc2}
The condition $c^2=ar$ has a very specific geometric meaning. If we intersect $\mathcal{H}$ and $\mathcal{S}$ with a vertical plane containing the $OZ$ axis and the center $(x_c,y_c,z_c)$ of $\mathcal{S}$, we obtain a hyperbola and a circumference, respectively.  Thus, we consider a vertical hyperbola of the hyperboloid (i.e. a hyperbola obtained as the intersection of a plane of the form $\theta=constant$ in cylindrical coordinates) and study its  curvature. For a parametrization of the form $h(t)=(a \cosh(t),c \sinh(t))$ the curvature is given by
\[
\kappa_h(t)=\frac{ac}{\sqrt{(a^2 \sinh^2(t)+c^2\cosh^2(t))^3}},
\]
where we have used that $\kappa_h(t)=\frac{\|h'(t)\times h''(t)\|}{\|h'(t)\|^3}$ (see, for example, \cite{docarmo}). At the intersection point with the plane $z=0$, the curvature of $h$ is maximum and is given by $\kappa_h(0)=\frac{a}{c^2}$. On the other hand, a maximum circumference  of the sphere of radious $r$ has constant curvature $\kappa_c=\frac{1}{r}$. Hence, the equality of both curvatures at the point of intersection with the plane $z=0$ gives rise to the condition
\[
\kappa_h(0)=\frac{a}{c^2}=\frac{1}{r}=\kappa_c\,,
\]
or, equivalently, $c^2=ar$.
Thus, the condition for the existence of situations like those given in Table~3 is precisely $c^2< ar$.

Also note that one can get to the situation where $c^2=ar$ by continuously modifying the values of $a$, $c$ and/or $r$ that satisfy $c^2< ar$. Thus, the situations in Table~3 that represent tangencies, these are $TEs$, $TEs1$ and $TEs2$, reduce to one point of tangency if $c^2=ar$. This is the reason why this equality is not included in Theorem~\ref{th:th3} and, furthermore, we will show in Lemma~\ref{lemma:rootc} that this situation corresponds to a triple positive root $c^2$ for the characteristic polynomial.
\end{remark}

\noindent{\bf Examples.} \it We present here two more examples to show how to apply Theorems~\ref{th:th2} and \ref{th:th3}. Let us take the hyperboloids and the spheres given by equations
\[
\begin{array}{ll}
\mathcal{H}_1:\frac{x^2}{2}+\frac{y^2}{2}-\frac{z^2}{4}=1, & \mathcal{S}_1:x^2+y^2+(z-3)^2=5\,,\\
\noalign{\smallskip}
\mathcal{H}_2:\frac{x^2}{4}+\frac{y^2}{4}-z^2=1, & \mathcal{S}_2:(x-3)^2+(y-3)^2+(z+1)^2=6\,.
\end{array}
\] 
We first consider $\mathcal{H}_1$ and $\mathcal{S}_1$. A straightforward calculation shows that $f(\lambda)= (2 +\lambda)^2 (-20 - 8 \lambda + \lambda^2)/16$ and the roots are:
\[
\lambda_1=\lambda_2=\lambda_3=-2 \,\text{ and }\,\lambda_4=10.
\] 
We check Table~2 to see that this root configuration corresponds to Type TIc, so $\mathcal{H}_1$ and $\mathcal{S}_1$ are tangent along a circumference.

We consider now  $\mathcal{H}_2$ and $\mathcal{S}_2$ and compute $f(\lambda)=(-2 +\lambda)^2 (-24 - 2 \lambda + \lambda^2)/16$. The roots of the characteristic polynomial are:
\[
\lambda_1=-4, \lambda_2=\lambda_3=2 \,\text{ and }\,\lambda_4=6.
\] 
We check Table~3 to see that this root configuration corresponds to Type TEs2, so  $\mathcal{H}_1$ and $\mathcal{S}_1$ are tangent at a point with extra non-tangent contact.\rm

\section{Technical results}\label{sect:analysis}

\subsection{Some remarks on the roots of the characteristic polynomial}\label{subsect:remarks-characteristic-polynomial}

\begin{lemma}\label{lemma:tecnical-results-1}
 The roots of the characteristic polynomial $f$ satisfy:
\begin{enumerate}
\item $-a^2$ is a root.
\item $0$ is not a root.
\item The product of all roots is $-a^4c^2r^2<0$.
\item There exists at least one positive real root $\lambda_4>0$.
\end{enumerate}
\end{lemma}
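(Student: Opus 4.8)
The plan is to work directly from the factored expression for the characteristic polynomial given in Remark~\ref{remark:coordinates}, namely
\[
f(\lambda)=\frac{(a^2+\lambda)\,g(\lambda)}{a^4c^2},\qquad
g(\lambda)=-(a^2+\lambda)\bigl[(c^2-\lambda)(r^2+\lambda)+z_c^2\lambda\bigr]+\lambda(c^2-\lambda)\rho_c^2,
\]
and to read off each of the four assertions from this form. Item (1) is immediate: the factor $(a^2+\lambda)$ shows that $\lambda=-a^2$ is always a root, regardless of the position of the sphere. For item (2), I would substitute $\lambda=0$ directly into $f$. In the factored form the second term of $g$ vanishes at $\lambda=0$, leaving $g(0)=-a^2c^2r^2$, so $f(0)=\frac{a^2\cdot(-a^2c^2r^2)}{a^4c^2}=-r^2\neq 0$ since $r>0$; hence $0$ is not a root.

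For item (3), I would use the standard fact that the product of the roots of a degree-four polynomial equals the constant term divided by the leading coefficient (up to the sign $(-1)^4=+1$). Rather than expand, the cleanest route is to observe that $f(0)$ equals the constant term, and the leading coefficient can be obtained as the coefficient of $\lambda^4$. From the computation above $f(0)=-r^2$, while the leading coefficient is the product of the $\lambda$-leading coefficients of $(a^2+\lambda)$ and of $g(\lambda)/(a^4c^2)$; expanding $g$ one finds its top-degree term comes from $-(a^2+\lambda)[-\lambda^2]+(-\lambda^2)\rho_c^2$, whose $\lambda^3$ coefficient is $1$, so the leading coefficient of $f$ is $1/(a^4c^2)$. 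The product of all four roots is then
\[
\lambda_1\lambda_2\lambda_3\lambda_4=\frac{f(0)}{\text{(leading coefficient)}}=\frac{-r^2}{1/(a^4c^2)}=-a^4c^2r^2<0,
\]
as claimed. Alternatively, one may invoke the earlier remark identifying the roots with the eigenvalues of $-H^{-1}S$, whose product is $\det(-H^{-1}S)=\det(S)/\det(H)$; a direct determinant computation of $\det(S)$ and $\det(H)$ yields the same value and serves as an independent check.

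Item (4) follows from items (2) and (3) by an intermediate-value argument. Since $f$ is a degree-four real polynomial with positive leading coefficient, $f(\lambda)\to+\infty$ as $\lambda\to+\infty$. On the other hand, by item (2) we have $f(0)=-r^2<0$. Therefore $f$ changes sign on $(0,+\infty)$ and, being continuous, must have at least one real root $\lambda_4>0$ there. I expect all four parts to be essentially routine once the factored form of $f$ is in hand; the only mild care needed is in item (3), where I must be sure the leading coefficient is computed correctly and with the right sign so that the product of the roots comes out negative rather than positive — that sign bookkeeping is the one place where an error could slip in, so I would double-check it against the eigenvalue interpretation of the roots.
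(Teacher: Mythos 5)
Your proof is correct and follows essentially the same route as the paper: both work from the factorization $f(\lambda)=(a^2+\lambda)g(\lambda)/(a^4c^2)$, compute $f(0)=-r^2$ to settle items (1)--(3) via Vieta's formulas. The only minor divergence is in item (4): the paper deduces a positive real root from the fact that the negative product prevents all roots from having the same sign, while you use the sign change of $f$ on $(0,\infty)$ and the intermediate value theorem --- an equally elementary argument that, if anything, is slightly more self-contained since it avoids having to note that complex roots pair off with positive product.
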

\proof
By a direct computation one sees that the characteristic polynomial takes the form
\[
f(\lambda)=\frac{\left(a^2+\lambda \right) g(\lambda)}{a^4 c^2}\,,
\]
where 
\begin{equation}\label{eq:poly3}
\begin{array}{rcl}
g(\lambda)&=&-(a^2+\lambda)[(c^2-\lambda)(r^2+\lambda)+z_c^2\lambda]+\lambda(c^2-\lambda)(x_c^2+y_c^2)\\
&=&\lambda^3+(a^2 - c^2 + r^2 - x_c^2 - y_c^2 - z_c^2)\lambda^2\\
\noalign{\smallskip}
&&-(a^2 c^2 - a^2 r^2 + c^2 r^2 - c^2 x_c^2 - c^2 y_c^2 + a^2 z_c^2)\lambda-a^2 c^2 r^2.
\end{array}
\end{equation}
Thus, $-a^2$ is a root of the characteristic polynomial.
Since $f(0)=-r^2$, $0$ is not a root and, as a consequence of Cardano-Vieta formulas, the product of the roots is $-a^4c^2r^2$. Moreover, since $-a^4c^2r^2<0$, not all the roots have the same sign, so there is at least one positive root.\qed

As a consequence of Lemma~\ref{lemma:tecnical-results-1}, a multiple root can be double or triple, but not cuadruple. There are two particular possible roots of $f$ that require an specific analysis, these are $-a^2$ and $c^2$. In the following lemmas we analyze them separately. First we show that $-a^2$ is a multiple root precisely when $\mathcal{S}$ is centered at the OZ axes:

\begin{lemma}\label{lemma:roota}
$-a^2$ is a multiple root of $f$ if and only if $x_c=y_c=0$. 
Moreover, if $x_c=y_c=0$,  $-a^2$ is a triple root if and only if $r^2=a^2+ a^2 z_c^2/(a^2 + c^2)$.
\end{lemma} 
\proof
Suppose $-a^2$ is a multiple root, then $-a^2$ is a root of $g$ in equation~\eqref{eq:poly3}, so
\[
g(-a^2)=-a^2 (a^2 + c^2) (x_c^2 + y_c^2)=0\,.
\]
This shows $-a^2$ is a root of $g$ if and only if $x_c=y_c=0$. If $x_c=y_c=0$ then $g$ reduces to
\[
g(\lambda)=-(a^2 + \lambda) (c^2 r^2 + c^2 \lambda - r^2 \lambda + z_c^2 \lambda - \lambda^2).
\] 
Now, we check that $-a^2$ is a root of $h(\lambda)=c^2 r^2 + c^2 \lambda - r^2 \lambda + z_c^2 \lambda - \lambda^2$ if and only if $r^2=a^2+a^2  z_c^2/(a^2 + c^2)$.
\qed

The particular case in which $\mathcal{S}$ has center at the $OZ$ axis, i.e. $x_c=y_c=0$, will play a distinguished role in our analysis. We summarize the results obtained with $x_c=y_c=0$ in the following corollary.

\begin{corollary}\label{coro:rootatriple}
  If $x_c=y_c=0$ then
\begin{enumerate} 
\item $f(\lambda)$ has four real roots: $-a^2$ which has multiplicity at least $2$ and  $$\lambda_{\pm}=\left(c^2-r^2+z_c^2\pm\sqrt{\left(c^2-r^2+z_c^2\right)^2+4 c^2 r^2}\right)/2.$$
\item  $-a^2$ is a triple root if and only if $\mathcal{S}$ is centered at $(0,0,z_c)$ with 
$$z_c^2= (r^2 -a^2 )(a^2 + c^2)/a^2.$$
\end{enumerate}
\end{corollary}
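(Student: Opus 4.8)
The plan is to work directly from the factorization of the characteristic polynomial already obtained in the proof of Lemma~\ref{lemma:roota}. Setting $x_c=y_c=0$ there, the cubic factor becomes $g(\lambda)=-(a^2+\lambda)(c^2 r^2 + c^2\lambda - r^2\lambda + z_c^2\lambda - \lambda^2)$, so substituting into $f(\lambda)=(a^2+\lambda)g(\lambda)/(a^4c^2)$ produces
\[
f(\lambda)=\frac{-(a^2+\lambda)^2}{a^4 c^2}\,h(\lambda), \qquad h(\lambda)=-\lambda^2+(c^2-r^2+z_c^2)\lambda+c^2 r^2 .
\]
This already exhibits $(a^2+\lambda)^2$ as a factor, so $-a^2$ is a root of multiplicity at least two, and the remaining two roots of $f$ are exactly the roots of the quadratic $h$ (which is the same auxiliary polynomial appearing in Lemma~\ref{lemma:roota}).

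For part (1), I would apply the quadratic formula to $h(\lambda)=0$, equivalently to $\lambda^2-(c^2-r^2+z_c^2)\lambda-c^2 r^2=0$, which yields precisely the stated $\lambda_{\pm}$. The one point needing justification is that these roots are real, i.e. that the discriminant $(c^2-r^2+z_c^2)^2+4c^2 r^2$ is nonnegative. But it is the sum of a square and the strictly positive quantity $4c^2 r^2$ (since $a,c,r>0$), hence strictly positive. Therefore $\lambda_+>\lambda_-$ are two distinct real numbers, and together with the double root $-a^2$ they account for all four roots, establishing~(1).

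For part (2), $-a^2$ is a triple root precisely when it is also a root of the quadratic factor $h$. I would evaluate $h(-a^2)$ and set it equal to zero; a short computation gives $a^2 z_c^2=(r^2-a^2)(a^2+c^2)$, which is the claimed condition $z_c^2=(r^2-a^2)(a^2+c^2)/a^2$. Equivalently, this is merely a rearrangement of the triple-root criterion $r^2=a^2+a^2 z_c^2/(a^2+c^2)$ already established in Lemma~\ref{lemma:roota}, so~(2) can be read off directly from that lemma.

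There is no genuine obstacle here; the corollary is a direct specialization of the earlier lemmas together with a single application of the quadratic formula. The only step worth stating explicitly is the sign argument guaranteeing the reality of $\lambda_{\pm}$ — namely that the discriminant is a sum of a square and a strictly positive term — since this is exactly what forces all four roots to be real (rather than a complex conjugate pair), and so distinguishes this centered configuration from the generic non-tangent contact case of Theorem~\ref{th:th1}.
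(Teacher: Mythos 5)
Your proposal is correct and follows essentially the same route as the paper: both use the factorization $f(\lambda)=-(a^2+\lambda)^2 h(\lambda)/(a^4c^2)$ from Lemma~\ref{lemma:roota}, solve the quadratic $h$ to get $\lambda_{\pm}$, note that the radicand $(c^2-r^2+z_c^2)^2+4c^2r^2>0$ forces real roots, and read off part~(2) as a rearrangement of the triple-root criterion in Lemma~\ref{lemma:roota}. No gaps.
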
 
\proof
 By Lemma~\ref{lemma:roota}, $f(\lambda)=-h(\lambda)(a^2+\lambda )^2/(a^4c^2)$ where
 $h(\lambda)=c^2 r^2 + c^2 \lambda - r^2 \lambda + z_c^2 \lambda - \lambda^2$. We find the roots of $h(\lambda)$ to see they are those  
given in assertion (1). Since the radicand is positive, $\lambda_{\pm}$ are real. 
Assertion (2) rephrases what was stated in Lemma~\ref{lemma:roota}.
\qed

Now, we turn our attention to the $c^2$ root and show that this is indeed a root if and only if the center of $\mathcal{S}$ belongs to the $XY$-plane:

\begin{lemma}\label{lemma:rootc}
$c^2$ is a root of $f$ if and only if $z_c=0$. Moreover, if $z_c=0$, $c^2$ is a multiple root if and only if $r^2=-c^2+c^2( x_c^2 + y_c^2)/(a^2 + c^2)$. More specifically, 
\begin{enumerate}
\item $c^2$ is a double root if $c^2\neq ar$, and
\item $c^2$ is a triple root if $c^2= ar$. 
\end{enumerate}
\end{lemma}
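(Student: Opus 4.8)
The plan is to work directly with the factorization $f(\lambda)=(a^2+\lambda)g(\lambda)/(a^4c^2)$ from Lemma~\ref{lemma:tecnical-results-1}, where $g$ is the cubic in~\eqref{eq:poly3}. Since $a^2+c^2>0$, the value $\lambda=c^2$ never cancels the linear factor $a^2+\lambda$, so $c^2$ is a root of $f$ exactly when it is a root of $g$, and its multiplicity in $f$ equals its multiplicity in $g$. This reduces the entire statement to an analysis of $g$ at $\lambda=c^2$.

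First I would establish the equivalence with $z_c=0$ by simply evaluating $g$ at $c^2$. In the expression $g(\lambda)=-(a^2+\lambda)[(c^2-\lambda)(r^2+\lambda)+z_c^2\lambda]+\lambda(c^2-\lambda)(x_c^2+y_c^2)$, every term carrying a factor $(c^2-\lambda)$ vanishes at $\lambda=c^2$, leaving
\[
g(c^2)=-(a^2+c^2)\,c^2\,z_c^2.
\]
Because $a^2+c^2>0$ and $c^2>0$, this is zero precisely when $z_c=0$, which gives the first claim.

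Next, assuming $z_c=0$, I would factor the common $(c^2-\lambda)$ out of $g$ and write $g(\lambda)=(c^2-\lambda)\,k(\lambda)$ with
\[
k(\lambda)=-(a^2+\lambda)(r^2+\lambda)+\lambda(x_c^2+y_c^2).
\]
Then $c^2$ is already a root of $g$ through the factor $(c^2-\lambda)$, and it is a \emph{multiple} root iff it is additionally a root of $k$. Evaluating and solving $k(c^2)=-(a^2+c^2)(r^2+c^2)+c^2(x_c^2+y_c^2)=0$ for $r^2$ yields exactly $r^2=-c^2+c^2(x_c^2+y_c^2)/(a^2+c^2)$, which is the stated multiplicity criterion.

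Finally, to separate the double and triple cases, I would use that $k$ is a quadratic with leading coefficient $-1$ and constant term $-a^2r^2$, so the product of its two roots equals $a^2r^2$. When $c^2$ is a root of $k$, the other root must equal $a^2r^2/c^2$; the two roots of $k$ coincide (making $c^2$ a double root of $k$, hence a triple root of $g$ and of $f$) exactly when $a^2r^2/c^2=c^2$, i.e. $c^4=a^2r^2$, i.e. $c^2=ar$. Otherwise $c^2$ is a simple root of $k$ and a double root of $f$. There is no genuine analytic obstacle here; the only care needed is the bookkeeping of multiplicities across the two factorizations $f=(a^2+\lambda)g/(a^4c^2)$ and $g=(c^2-\lambda)k$. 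The triple-root case can be cross-checked against Lemma~\ref{lemma:tecnical-results-1}: with $c^2=ar$ the roots $-a^2,c^2,c^2,c^2$ multiply to $-a^5r^3=-a^4c^2r^2$, consistent with the product of roots computed there.
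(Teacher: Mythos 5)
Your proposal is correct and follows essentially the same route as the paper: evaluate $g$ at $c^2$ to obtain $-(a^2+c^2)c^2z_c^2$ (so $c^2$ is a root iff $z_c=0$), factor $g(\lambda)=(c^2-\lambda)k(\lambda)$ when $z_c=0$, and test the quadratic factor at $c^2$ to get the multiplicity criterion $r^2=-c^2+c^2(x_c^2+y_c^2)/(a^2+c^2)$. The only minor difference is the last step: you settle the double-versus-triple dichotomy via Vieta's formula (the product of the roots of $k$ is $a^2r^2$, so the second root is $a^2r^2/c^2$), whereas the paper refactors $g$ as $(c^2-\lambda)^2$ times a linear factor and combines the two resulting conditions; both arguments land on $c^4=a^2r^2$, i.e.\ $c^2=ar$.
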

\proof
We compute $g(c^2)=-c^2 (a^2 + c^2) z_c^2$ to see that $c^2$ is a root of $f$ if and only if $z_c=0$. If $z_c=0$, then 
\[
g(\lambda)=-(c^2 - \lambda) (a^2 r^2 +(a^2 + r^2 - 
   x_c^2 - y_c^2 )\lambda + \lambda^2).
\]
Now, substitute $\lambda$ by $c^2$ in the previous second order factor  to see that $c^2$ has multiplicity at least $2$ if and only if $r^2=-c^2+c^2( x_c^2 + y_c^2)/(a^2 + c^2)$. Assume $c^2$ is a multiple root, then
\[
g(\lambda)=(c^2 - \lambda)^2 (a^2-a^2\frac{ x_c^2+ y_c^2 }{a^2 + c^2}+\lambda),
\]
from where we get the condition $(c^2+a^2)^2=a^2(x_c^2+y_c^2)$ for $c^2$ to be a triple root. Now, from the two conditions one gets the relation $c^4=a^2r^2$, which is equivalent to $c^2=ar$ since $a$, $c$ and $r$ are all positive.
\qed

\begin{remark}\rm \label{remark:rya}Observe from Lemma~\ref{lemma:roota} that if $-a^2$ is a triple root  then $a\leq r$, since $r^2= a^2 + \ast$ where $\ast\in [0,\infty)$. Moreover, if $-a^2$ is a triple root  and $r=a$ then $\mathcal{S}$ is centered at $(0,0,0)$ and intersects $\mathcal{H}$ in a circumference. Furthermore, in this case the positive root is $c^2$, as seen in Lemma~\ref{lemma:rootc}.

Also, recall from Remark~\ref{remark:relationarc2} that the condition given in Lemma~\ref{lemma:rootc} for $c^2$ to be a triple root is precisely that the vertical hyperbola of $\mathcal{H}$ has the same curvature at the point $z=0$ as a maximum circumference of $\mathcal{S}$. 
\end{remark}



\begin{lemma}\label{lemma:distribucion-raices}\mbox{}
\begin{enumerate}
\item If $f$ has three negative roots $-a^2$, $\lambda_2$ and $\lambda_3$. Then either $\lambda_2\leq \lambda_3\leq -a^2$ or $-a^2 \leq\lambda_2\leq \lambda_3< 0$.
\item If $f$ has three positive roots $\lambda_2$, $\lambda_3$ and $\lambda_4$. Then either $\lambda_2\leq \lambda_3\leq c^2 \leq \lambda_4$ or $c^2 \leq\lambda_2\leq \lambda_3\leq \lambda_4$.
\end{enumerate}
\end{lemma}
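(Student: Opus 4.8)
The plan is to reduce both statements to the sign of the cubic $g$ evaluated at the two distinguished abscissas $-a^2$ and $c^2$. Recall from the proofs of Lemma~\ref{lemma:roota} and Lemma~\ref{lemma:rootc} the explicit values
\[
g(-a^2)=-a^2(a^2+c^2)(x_c^2+y_c^2)\leq 0
\qquad\text{and}\qquad
g(c^2)=-c^2(a^2+c^2)z_c^2\leq 0,
\]
both nonpositive because $a,c>0$ while $x_c^2+y_c^2\geq 0$ and $z_c^2\geq 0$. Since $g$ is monic of degree three (see~\eqref{eq:poly3}) and, by Lemma~\ref{lemma:tecnical-results-1}, the remaining root of $f$ is $-a^2$, its three roots are exactly $\lambda_2,\lambda_3,\lambda_4$, so throughout I would work with the factorization $g(\lambda)=(\lambda-\lambda_2)(\lambda-\lambda_3)(\lambda-\lambda_4)$.

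For part (1), $\lambda_2\leq\lambda_3<0$ are the two negative roots of $g$ and $\lambda_4>0$ is the positive root provided by Lemma~\ref{lemma:tecnical-results-1}. I would evaluate the factorization at $-a^2$, obtaining $g(-a^2)=(-a^2-\lambda_2)(-a^2-\lambda_3)(-a^2-\lambda_4)$, and observe that the last factor is negative since $-a^2<0<\lambda_4$. As $g(-a^2)\leq 0$, the product of the first two factors must then be nonnegative, forcing $-a^2-\lambda_2$ and $-a^2-\lambda_3$ to share the same sign. Hence either $\lambda_2,\lambda_3\leq -a^2$ or $\lambda_2,\lambda_3\geq -a^2$, which together with $\lambda_2\leq\lambda_3$ yields precisely the two alternatives $\lambda_2\leq\lambda_3\leq -a^2$ and $-a^2\leq\lambda_2\leq\lambda_3<0$.

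For part (2) the argument is the mirror image, now pivoting on $c^2$. Here $0<\lambda_2\leq\lambda_3\leq\lambda_4$, and I would evaluate $g(c^2)=(c^2-\lambda_2)(c^2-\lambda_3)(c^2-\lambda_4)\leq 0$. The sign of this product is controlled by the number of roots exceeding $c^2$, so a short case check eliminates the two configurations incompatible with $g(c^2)\leq 0$: if $c^2>\lambda_4$ all three factors are positive, and if $\lambda_2<c^2<\lambda_3$ exactly two factors are negative, both giving a strictly positive product. The surviving possibilities are $c^2\leq\lambda_2$ and $\lambda_3\leq c^2\leq\lambda_4$, which are exactly $c^2\leq\lambda_2\leq\lambda_3\leq\lambda_4$ and $\lambda_2\leq\lambda_3\leq c^2\leq\lambda_4$.

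I do not expect a genuine obstacle: the whole content of the lemma is already packaged in the two nonpositivity facts $g(-a^2)\leq 0$ and $g(c^2)\leq 0$ borrowed from the earlier lemmas, and what remains is elementary sign bookkeeping. The only point deserving care is the treatment of the boundary equalities — for instance $\lambda_2=-a^2$ (which occurs exactly when $x_c=y_c=0$) in part (1), or $c^2\in\{\lambda_2,\lambda_3,\lambda_4\}$ (occurring when $z_c=0$) in part (2) — and checking that each of these is correctly absorbed into the non-strict inequalities of the stated alternatives rather than producing an excluded case.
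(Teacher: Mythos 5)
Your proof is correct and takes essentially the same route as the paper: both arguments rest on the two nonpositivity facts $g(-a^2)\leq 0$ and $g(c^2)\leq 0$ combined with the sign pattern of a monic cubic having three real roots (the paper phrases this via $\lim_{\lambda\to\pm\infty}g(\lambda)=\pm\infty$, you via the explicit factorization $g(\lambda)=(\lambda-\lambda_2)(\lambda-\lambda_3)(\lambda-\lambda_4)$). If anything, your case check is slightly more complete, since in part (2) you explicitly rule out $c^2>\lambda_4$, a case the paper's stated exclusion $\lambda_2<c^2<\lambda_3$ leaves implicit.
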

\begin{proof}
From the proofs of Lemmas~\ref{lemma:roota} and \ref{lemma:rootc} we know that $g(-a^2)=-a^2 (a^2 + c^2) (x_c^2 + y_c^2)$
and that $g(c^2)=-c^2 (a^2 + c^2) z_c^2$, so $g(-a^2)\leq 0$ and $g(c^2)\leq 0$. Since $\lim_{\lambda\to \pm\infty} g(\lambda)=\pm \infty$, we have that the relations $\lambda_2<-a^2<\lambda_3$ and $\lambda_2<c^2<\lambda_3$ for the roots of $g$ are not possible. Hence the lemma follows.
\end{proof}

\subsection{Tangency points and multiple roots}\label{subsect:tangency-multiplicity}

A key point in our global analysis is the relation between tangency and the multiplicity of roots. This is the main subject of the current section.

We consider the roots $-a^2$ and $c^2$ separately as previously. We begin by analyzing the root $-a^2$ as a triple root and the corresponding tangency as follows.
\begin{lemma}\label{lemma:rootatangent}
$-a^2$ is a triple root of $f(\lambda)$ if and only if $\mathcal{H}$ and $\mathcal{S}$ are tangent along a circumference.
\end{lemma}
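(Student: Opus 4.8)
The plan is to reduce everything to the two conditions furnished by the earlier lemmas, namely that $-a^2$ is a triple root if and only if $x_c=y_c=0$ \emph{and} $z_c^2=(r^2-a^2)(a^2+c^2)/a^2$ (Corollary~\ref{coro:rootatriple}), and then to show that these are precisely the algebraic shadow of ``tangency along a circumference.'' The key simplifying observation is that when $x_c=y_c=0$ the sphere is centered on the $OZ$ axis, so by the rotational symmetry of $\mathcal{H}$ the whole configuration is determined by its meridian section: in the half-plane $\{x\ge 0,\,y=0\}$ the hyperboloid is the curve $x=\sqrt{a^2(1+z^2/c^2)}$ and the sphere is $x=\sqrt{r^2-(z-z_c)^2}$, and a contact circle of $\mathcal{H}\cap\mathcal{S}$ corresponds to a crossing of these two meridian curves. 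Intersections at a common height $z$ are governed by the sign of
\[
\phi(z)=\bigl(r^2-(z-z_c)^2\bigr)-a^2\bigl(1+z^2/c^2\bigr),
\]
a genuine quadratic in $z$ (its leading coefficient is $-(1+a^2/c^2)\neq0$). At a zero $z_0$ of $\phi$ the two latitude circles of radius $\rho_0^2=a^2(1+z_0^2/c^2)\ge a^2>0$ coincide, and since both surfaces are surfaces of revolution about $OZ$, tangency at every point of that circle (equality of the normals $HX$ and $SX$, as in the definition) is equivalent to tangency of the two meridian curves at $z_0$, which in turn is equivalent to $\phi'(z_0)=0$; that is, to $z_0$ being a \emph{double} root of $\phi$.

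For the forward implication I would assume $-a^2$ is a triple root. Corollary~\ref{coro:rootatriple} then forces $x_c=y_c=0$ (so the sphere is axial) and $z_c^2=(r^2-a^2)(a^2+c^2)/a^2$, which by Remark~\ref{remark:rya} also guarantees $r\ge a$ and hence a real $z_0$. A direct computation shows that the discriminant of $\phi$ vanishes exactly under this radius relation, so $\phi$ has a double root $z_0$ with $\rho_0^2\ge a^2>0$. By the equivalence recorded above this means $\mathcal{H}$ and $\mathcal{S}$ meet only in the single latitude circle at height $z_0$ and are tangent along it; this is the position of type $TIc$.

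For the converse I would start from a tangency circle $\Gamma\subseteq\mathcal{H}\cap\mathcal{S}$ and first pin down that $\Gamma$ must be a latitude circle. A short computation shows $\mathcal{H}$ carries no tilted circular sections: substituting a plane $z=mx+d$ (a general plane may be rotated to this form, using the invariance of $\mathcal{H}$ under rotations about $OZ$) into $x^2+y^2-\tfrac{a^2}{c^2}z^2=a^2$ and equating the two in-plane quadratic coefficients yields $m^2(1+a^2/c^2)=0$, so $m=0$. Hence $\Gamma$ lies in a horizontal plane and is centered on the $OZ$ axis; being contained in $\mathcal{S}$, its center's axis (the line through the center of $\Gamma$ perpendicular to its plane) must pass through the center of $\mathcal{S}$, forcing $x_c=y_c=0$. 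Now Lemma~\ref{lemma:roota} gives that $-a^2$ is at least a double root, and since $\Gamma$ is a tangency circle the meridian curves are tangent, so $\phi$ has a double root; the vanishing discriminant yields $z_c^2=(r^2-a^2)(a^2+c^2)/a^2$, whence $-a^2$ is triple by Corollary~\ref{coro:rootatriple}.

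The routine parts (the discriminant of $\phi$, and checking it matches the radius relation of Corollary~\ref{coro:rootatriple}) are mechanical. The main obstacles are the two geometric-to-algebraic translations: first, the claim that a circle lying on a circular hyperboloid of one sheet is necessarily a horizontal latitude circle, which rules out any exotic off-axis tangency circle and is what legitimizes passing to the meridian picture; and second, the precise equivalence ``shared tangent plane along $\Gamma$'' $\Leftrightarrow$ ``$\phi$ has a double root,'' which must be argued carefully to distinguish genuine tangency from a transversal double-circle intersection (two distinct simple roots of $\phi$). Once these are established, both directions close by invoking Corollary~\ref{coro:rootatriple}.
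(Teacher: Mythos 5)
Your proof is correct and follows essentially the same route as the paper's: reduce to the axial case $x_c=y_c=0$ via Lemma~\ref{lemma:roota}, eliminate $x^2+y^2$ to get a quadratic in $z$ (your $\phi$), identify tangency along a circumference with a double root of that quadratic, and match the vanishing discriminant with the triple-root condition. The one place you go beyond the paper is welcome: the paper merely asserts that $x_c=y_c=0$ is necessary for tangency along a circumference, whereas you actually justify it by showing a circular hyperboloid of one sheet carries no tilted circular sections, so any tangency circle is a latitude circle whose axis forces the sphere's center onto $OZ$.
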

\proof
By Lemma~\ref{lemma:roota}, a necessary condition for $-a^2$  to be a triple root is that $x_c=y_c=0$. Note also that this is a necessary condition for tangency along a circumference. Thus, assume $x_c=y_c=0$ henceforth.
We study the solutions of the system given by the equations $X_0^t HX_0=0$ and $X_0^tSX_0=0$, which for $X_0^t=(x,y,z,1)$ in homogeneous coordinates are:
\[
\frac{x^2}{a^2} + \frac{y^2}{a^2} - \frac{z^2}{c^2}=1 \text{ and } x^2+y^2+(z-z_c)^2=r^2\,.
\]
Substitute $x^2+y^2$ in the previous equations to see that
\[
\frac{r^2-(z-z_c)^2}{a^2} - \frac{z^2}{c^2}=1\,.
\]
Due to the symmetry of the problem, a tangent solution corresponds to a unique solution of this equation for $z$. Thus, the discriminant $-c^2 (a^4 - c^2 r^2 + a^2 (c^2 - r^2 + z_c^2))$ must vanish. Since $c\neq 0$, this is equivalent to $r^2= a^2 (1 + z_c^2/(a^2 + c^2))$. But this is precisely the condition for $-a^2$ to be a triple root, which was given in Lemma~\ref{lemma:roota}. 
\qed

\medskip

As we saw in Remark~\ref{remark:rya}, a necessary condition for  $-a^2$ to be a triple root is that $a\leq r$. Furthermore, it is also a necessary condition for the sphere to be tangent to the hyperboloid on a circumference and both facts are related by Lemma~\ref{lemma:rootatangent}. Now we are going to analyze the possible relative positions when the center of the sphere is at the $OZ$ axis.

\begin{lemma}\rm\label{lemma:eixoz} 
Let $x_c=y_c=0$. If $r<a$ then the roots $\lambda_i$, $i=1,\dots,4$, satisfy  $-a^2=\lambda_1=\lambda_2< \lambda_3<0<\lambda_4$,
whereas if $a\leq r$ then one of the following possibilities holds:
\begin{enumerate}
\item $\mathcal{S}$ and $\mathcal{H}$ are in non-tangent contact (see Picture Ca in Table 2) and  the roots $\lambda_i$, $i=1,\dots,4$, satisfy 
\[\lambda_2<-a^2=\lambda_1=\lambda_3<0<\lambda_4.\]
\item $\mathcal{S}$ and $\mathcal{H}$ are tangent along a circumference (see Picture TIc in Table 2) and the roots $\lambda_i$, $i=1,\dots,4$, satisfy 
\[-a^2=\lambda_1=\lambda_2= \lambda_3<0<\lambda_4.\]
\item $\mathcal{S}$ and $\mathcal{H}$ are not in contact (see Picture I in Table 1) and the roots $\lambda_i$, $i=1,\dots,4$, satisfy  
 \[-a^2=\lambda_1=\lambda_2< \lambda_3<0<\lambda_4.\]
\end{enumerate}
\end{lemma}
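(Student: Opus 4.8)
The plan is to combine the explicit root formulas from Corollary~\ref{coro:rootatriple} with a direct analysis of the intersection of the two surfaces. By Corollary~\ref{coro:rootatriple}, the hypothesis $x_c=y_c=0$ forces all four roots to be real: $-a^2$, of multiplicity at least two, together with $\lambda_{\pm}$, the roots of the downward parabola $h(\lambda)=-\lambda^2+(c^2-r^2+z_c^2)\lambda+c^2r^2$. Since the product of the two nontrivial roots is $\lambda_+\lambda_-=-c^2r^2<0$, exactly one of them is positive; I will take $\lambda_4=\lambda_+>0$ and note $\lambda_-<0$. Thus the only remaining freedom is the position of $\lambda_-$ relative to the negative root $-a^2$, and the task splits into (a) locating $\lambda_-$ algebraically and (b) attaching a geometric relative position to each configuration.

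For (a), I would evaluate $h$ at $-a^2$, obtaining $h(-a^2)=(a^2+c^2)(r^2-a^2)-a^2z_c^2$. Because $h$ opens downward with roots $\lambda_-<\lambda_+$ and because $-a^2<0<\lambda_+$, the sign of $h(-a^2)$ pins down $\lambda_-$: one has $\lambda_-<-a^2$ when $h(-a^2)>0$; $\lambda_-=-a^2$ (so $-a^2$ is a triple root) when $h(-a^2)=0$; and $-a^2<\lambda_-$ when $h(-a^2)<0$. If $r<a$ then $h(-a^2)<0$ automatically, giving at once $-a^2=\lambda_1=\lambda_2<\lambda_3=\lambda_-<0<\lambda_4$, which is the claimed configuration. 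If $a\leq r$ all three signs are attainable, and the borderline $h(-a^2)=0$ is exactly the triple-root condition $z_c^2=(r^2-a^2)(a^2+c^2)/a^2$ of Corollary~\ref{coro:rootatriple}; the three sign cases produce precisely the three root patterns listed in possibilities (1), (2) and (3).

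For (b), I would intersect $\mathcal{H}$ and $\mathcal{S}$ directly, as in the proof of Lemma~\ref{lemma:rootatangent}. Eliminating $x^2+y^2$ between the two equations leaves a single quadratic in $z$, and by the circular symmetry each real root $z_0$ yields a full horizontal circle of common points; the two surfaces are tangent along such a circle precisely when the two meridian curves (a hyperbola and a circle) share a tangent line there, i.e. when $z_0$ is a double root of the quadratic. A short computation shows that the discriminant of this quadratic equals $4c^2\,h(-a^2)$. Consequently $h(-a^2)>0$ gives two distinct circles met transversally, i.e. non-tangent contact (possibility (1)); $h(-a^2)=0$ gives a single circle of tangency, which by Lemma~\ref{lemma:rootatangent} is tangency along a circumference (possibility (2)); and $h(-a^2)<0$ gives no real $z$, hence no contact (possibility (3)). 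Matching these contact types with the root locations from step (a) yields the stated correspondence.

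The routine parts are the evaluation of $h(-a^2)$ and of the discriminant, which are elementary. The one step that needs care, and which I expect to be the crux, is the geometric translation: one must verify that tangency of the surfaces along a circle is equivalent to a double root of the intersection quadratic (equivalently, to equality of the meridian slopes), so that \emph{contact} in the sense of the Definition is correctly read off from the multiplicity of the $z$-roots. The pleasant feature that makes the proof work is that the single quantity $h(-a^2)$ simultaneously controls the position of $\lambda_-$ relative to $-a^2$ and, through its appearance in the discriminant, the contact type.
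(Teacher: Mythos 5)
Your proposal is correct, and it takes a genuinely different route from the paper's in the key case analysis. The paper also starts from the explicit roots of Corollary~\ref{coro:rootatriple}, but then argues by monotonicity: it notes that $\lambda_-$ is increasing in $z_c^2$, anchors the configuration at the explicit case $z_c=0$ (roots $-a^2,-a^2,-r^2,c^2$), and for $a\leq r$ invokes Lemma~\ref{lemma:rootatangent} (tangency along a circumference iff $-a^2$ is a triple root) together with Lemma~\ref{lemma:distribucion-raices} and a mutual-exclusivity argument to pair the remaining two root patterns with the remaining two relative positions. You instead observe that the single quantity $h(-a^2)=(a^2+c^2)(r^2-a^2)-a^2z_c^2$ controls everything: its sign locates $\lambda_-$ relative to $-a^2$ (since $h$ opens downward and $\lambda_+>0$), and it reappears as the discriminant $4c^2h(-a^2)$ of the quadratic in $z$ obtained by eliminating $x^2+y^2$, whose real roots are exactly the heights of the intersection circles (each real root automatically yields a genuine circle because $x^2+y^2=a^2(1+z^2/c^2)>0$ there). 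This turns the pairing of root patterns with geometric positions into a direct equivalence rather than one inferred from exclusivity and continuity; in effect you extend to all three signs the discriminant computation that the paper carries out, in the proof of Lemma~\ref{lemma:rootatangent}, only for the vanishing case. The step you flag as the crux---tangency along a circle iff the $z$-quadratic has a double root---does check out: the double root sits at the vertex $z_0=c^2z_c/(a^2+c^2)$, which is precisely where the meridian gradients $(\rho/a^2,-z/c^2)$ and $(\rho,z-z_c)$ are parallel, and this is the same symmetry claim the paper itself uses without further justification. What each approach buys: the paper's reuses its tangency lemma and fits the moving-sphere philosophy employed elsewhere in the text, while yours is self-contained, avoids the monotonicity and continuity reasoning, and identifies the non-tangent double contact and no-contact cases explicitly instead of by elimination.
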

\proof
Assume $x_c=y_c=0$. By Lemma~\ref{lemma:roota} this is equivalent to the fact that $-a^2$ is a multiple root, so we can assume $\lambda_1=\lambda_2=-a^2$. There is an extra negative root $\lambda_3$ and a positive root $\lambda_4$.  We analyze the negative root $\lambda_3$ depending on the value of $z_c$. By Corollary~\ref{coro:rootatriple} the negative root is given by \[\lambda_3=\left(c^2-r^2+z_c^2-\sqrt{\left(c^2-r^2+z_c^2\right)^2+4 c^2 r^2}\right)/2.\]
Note that the derivative of $\lambda_3$ with respect to $z_c^2$ is positive so we conclude that $\lambda_3$ grows as $z_c^2$ increases. 
We analyze the cases $r<a$ and $a\leq r$ separately. If $r<a$, note from expression \eqref{eq:poly3} in Lemma~\ref{lemma:tecnical-results-1} that the roots of $f$ are $\lambda_1=\lambda_2=-a^2$, $\lambda_3=-r^2$ and $\lambda_4=c^2$ if the sphere is centered at $(0,0,0)$, so the result follows because $\lambda_3$ increases with $z_c^2$. Assume henceforth that $r\geq a$. 
By Lemma~\ref{lemma:rootatangent} tangency between $\mathcal{S}$ and $\mathcal{H}$ is characterized by $\lambda_3=-a^2=\lambda_1=\lambda_2$ and, thereby, we obtain the three given possibilities (note that we have renamed $\lambda_2$ and $\lambda_3$ in assertion (1) for coherence with the notation in Table 2). Since each of the three relative positions matches one of the three possible configuration of roots (see Lemma~\ref{lemma:distribucion-raices}), the conditions are necessary and sufficient.
\qed

\begin{lemma}\label{lemma:tangency-rootc} 
If $c^2$ is a multiple root, then $\mathcal{H}$ and $\mathcal{S}$ are tangent and $z_c=0$. Moreover, in this case,
\begin{enumerate}
\item $c^2$ is a double root if and only if $\mathcal{H}$ and $\mathcal{S}$ are tangent in two points of the same vertical ray (i.e. in two points of the form $(x,y,\pm z)$) and $c^2<ar$ (see Picture~TEs in Table 3). 

\item $c^2$ is a triple root if and only if $\mathcal{H}$ and $\mathcal{S}$ are tangent at one point of the plane $z=0$ and $ar=c^2$.  
\end{enumerate}
\end{lemma}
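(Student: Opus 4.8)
The plan is to reduce everything to the factorization of $g$ available when $z_c=0$ and then read off the tangency points from the kernel of the degenerate pencil member $c^2H+S$. First I would invoke Lemma~\ref{lemma:rootc}: since $c^2$ is a root of $f$, necessarily $z_c=0$, and the multiple-root hypothesis becomes $r^2=-c^2+c^2(x_c^2+y_c^2)/(a^2+c^2)$, so in particular $x_c^2+y_c^2>a^2+c^2$. By the circular symmetry recorded in Remark~\ref{remark:coordinates} I may rotate so that $y_c=0$ and write $x_c=\rho_c>0$ without altering the roots. This already gives the first assertion of the statement, that $z_c=0$.

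The core of the argument is to detect tangency directly from the pencil. When $z_c=0$ the $(3,3)$ entry of $c^2H+S$ vanishes, so $e_3=(0,0,1,0)^t$ always lies in $\ker(c^2H+S)$; a short computation shows that under the multiple-root condition the remaining $3\times 3$ block in the coordinates $x,y,w$ also drops rank, so that $\ker(c^2H+S)$ is two–dimensional, spanned by $e_3$ and a vector $v=(\rho_c/k,0,0,1)^t$ with $k=(a^2+c^2)/a^2$. For any $X_0\in\ker(c^2H+S)$ one has $SX_0=-c^2HX_0$, hence $Y^tSX_0=-c^2\,Y^tHX_0$ for every $Y$; therefore $Y^tHX_0=0$ if and only if $Y^tSX_0=0$, which is exactly the tangent-plane condition in the Definition. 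Consequently every kernel vector with $X_0^tHX_0=0$ is a tangency point of $\mathcal{H}$ and $\mathcal{S}$. Restricting the form $X\mapsto X^tHX$ to $\ker(c^2H+S)$ and writing $X_0=\alpha e_3+\beta v$ gives
\[
X_0^tHX_0=-\frac{\alpha^2}{c^2}+\beta^2\,\frac{a^2\rho_c^2-(a^2+c^2)^2}{(a^2+c^2)^2},
\]
and one checks that the sign of the coefficient of $\beta^2$ equals the sign of $ar-c^2$.

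This single identity yields the whole trichotomy. If $c^2<ar$ the form is indefinite and $X_0^tHX_0=0$ has two real solution lines $\alpha/\beta=\pm\text{const}$, producing two tangency points sharing the same $x,y$ and with opposite $z$, i.e. points $(x,y,\pm z)$ with $z\neq0$. If $c^2=ar$ the coefficient vanishes and the only real solution is $\alpha=0$, giving a single tangency point, which one computes to be $(a,0,0)$ on the plane $z=0$. To finish I would feed in the algebraic dichotomy of Lemma~\ref{lemma:rootc}, namely that $c^2$ is a double root when $c^2\neq ar$ and a triple root when $c^2=ar$: matching this with the geometric trichotomy gives assertion (2) at once and the case $c^2<ar$ of assertion (1). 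For the converses I would note that the geometric pictures are mutually exclusive and, by Lemma~\ref{lemma:distribucion-raices}, exhaust the admissible positions of a multiple root $c^2$, so the correspondence is one to one.

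The delicate point, and the one I expect to be the main obstacle, is the regime $c^2>ar$: there the coefficient of $\beta^2$ is negative, the restricted form is negative definite, and $X_0^tHX_0=0$ has no nonzero real solution, so $c^2$ is still a double root of $f$ but the tangency it records sits at a pair of complex points and the surfaces need not meet in the real picture. Thus the real content of assertion (1) is genuinely tied to the inequality $c^2<ar$, and making this reality discussion airtight—rather than carrying out the routine rank and substitution computations—is where the care is needed.
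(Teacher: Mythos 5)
Your argument is correct and, in its mechanism, genuinely different from the paper's. The paper proceeds geometrically: it cuts both surfaces by the vertical plane through the center of $\mathcal{S}$, reduces the intersection to the quadratic \eqref{eq:intersection-substitutedz2} in $\rho$, identifies the multiple-root condition of Lemma~\ref{lemma:rootc} with the vanishing of its discriminant, verifies tangency by comparing the normals $(\rho/a^2,-z/c^2)$ and $(\rho-\rho_c,z)$, and finally splits on $z=0$ versus $z\neq 0$ in the identity $z^2=c^2\left(\frac{a^2\rho_c^2}{(a^2+c^2)^2}-1\right)$. You instead work with the degenerate pencil member $c^2H+S$: under the multiple-root condition its kernel is two-dimensional, every kernel vector automatically satisfies the tangent-plane condition because $SX_0=-c^2HX_0$, and the tangency points are the real isotropic lines of the restriction of $X\mapsto X^tHX$ to that kernel. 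The computational content coincides (your coefficient of $\beta^2$ and the paper's $z^2$ are both governed by the sign of $a^2\rho_c^2-(a^2+c^2)^2$, i.e.\ of $ar-c^2$), but your route gets tangency for free, with no separate normal-vector check, and it isolates the reality of the contact points as a signature question for a binary quadratic form.

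The ``delicate point'' you flag is in fact a genuine gap in the paper's own lemma and proof, not a defect of your argument. When $c^2>ar$ the multiple-root condition can still be met: take $z_c=0$ and $\rho_c^2=(r^2+c^2)(a^2+c^2)/c^2$; for instance $a=r=1$, $c=2$, $\rho_c=5/2$ gives roots $-1,\ 1/4,\ 4,\ 4$, so $c^2=4$ is a double root. There your restricted form is negative definite, the would-be tangency points are complex (indeed $z^2=(a^2r^2-c^4)/(a^2+c^2)=-3$), and the sphere is strictly exterior to the hyperboloid with no contact whatsoever; thus the opening claim of the lemma, that a multiple root $c^2$ forces $\mathcal{H}$ and $\mathcal{S}$ to be tangent, is false in this regime. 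The paper's proof slides over this by tacitly assuming that the unique solution $\rho=a^2\rho_c/(a^2+c^2)$ lifts to a real point of both surfaces. So the statement needs the hypothesis $c^2\leq ar$, or else tangency must be read over $\mathbb{C}$; with that amendment your proof is complete, and your treatment of the converse directions (mutual exclusivity and exhaustiveness of the cases) is at the same level of rigor as the paper's own global argument in Section~\ref{sect:proofs-ths}.
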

\proof
From Lemma~\ref{lemma:rootc} we know that $c^2$ is a multiple root if and only if $z_c=0$ and $r^2=-c^2+c^2( x_c^2 + y_c^2)/(a^2 + c^2)$. Now, we check that the latter condition corresponds to the existence of tangency. We study the solutions of the system given by the equations $X_0^tHX_0=0$ and $X_0^tSX_0=0$, which for $X_0^t=(x,y,z,1)$ in homogeneous coordinates correspond to the equations
\[
\frac{x^2}{a^2} + \frac{y^2}{a^2} - \frac{z^2}{c^2}=1 \text{ and } (x-x_c)^2+(y-y_c)^2+z^2=r^2\,.
\]
We are going to analyze the solution that has the direction given by $(x_c,y_c)$ at the $XY$-plane. In other words, we intersect by the vertical plane that contains the origin and the intersection points between the hyperboloid and the sphere in a hypothetical tangency situation. Hence by a change to cylindrical coordinates we consider the following equations instead:
\begin{equation}\label{eq:intersection}
\frac{\rho^2}{a^2} - \frac{z^2}{c^2}=1 \text{ and } (\rho-\rho_c)^2+z^2=r^2\,.
\end{equation}
Substitute $z^2$ to see that 
\begin{equation}\label{eq:intersection-substitutedz2}
(1+\frac{c^2}{a^2})\rho^2-2 \rho_c \rho+\rho_c^2-c^2-r^2=0\,.
\end{equation}
Now, observe that if the discriminant vanishes then there is a unique solution for $\rho$. The discriminant vanishes if $r^2=-c^2+c^2\rho_c^2/(a^2 + c^2)$, which is precisely the condition for $c^2$ to be a multiple root. 

Assuming  $r^2=-c^2+c^2\rho_c^2/(a^2 + c^2)$ the coordinate $\rho$ of the intersection points is given by $\rho=a^2\rho_c/(a^2+c^2)$. On the other hand, a normal vector to $\mathcal{H}$ has expression $(\frac{\rho}{a^2},\frac{-z}{c^2})$ and a normal vector to $\mathcal{S}$ has expression $(\rho-\rho_c,z)$. This two vectors have the same direction if and only if $\rho=a^2\rho_c/(a^2+c^2)$. Therefore the intersection points are indeed tangency points.

We continue the analysis of the multiplicity of the root. First, substitute in the hyperboloid equation the expression for $\rho$ to see that the $z$ coordinate of the intersection point satisfies  $z^2=c^2(\frac{a^2\rho_c^2}{(a^2+c^2)^2}-1)$. We have two possibilities:
\begin{itemize}
\item $z=0$: from the previous expresion this is equivalent to $a^2\rho_c^2=(a^2+c^2)^2$. But, by Lemma~\ref{lemma:rootc} this means that $c^2$ is a triple root and $ar=c^2$. In this case we have only one point of tangency. This shows assertion (2).
\item $z\neq 0$: so $a^2\rho_c^2\neq (a^2+c^2)^2$ and, by Lemma~\ref{lemma:rootc} $c^2$ is a double root. Moreover, in this case we have two points of tangency in the same vertical ray of the form $(x,y,\pm z)$. For this to be possible it is necessary that $c^2<ar$, as we saw in Remark~\ref{remark:relationarc2}. This shows assertion (1).\qed
\end{itemize}

Note that if the center of $\mathcal{S}$ is in the plane $z=0$ then $-a^2$ and $c^2$ are roots of the characteristic polynomial, as follows from Lemmas~\ref{lemma:tecnical-results-1} and \ref{lemma:rootc}. We consider a generic example with this particular property.
\begin{example}\label{example:example}
Consider generic $a$, $c$ and $r$. We build a generic example so that $\mathcal{S}$ is exterior to $\mathcal{H}$. Let the center of $\mathcal{S}$ be at $(x_c,0,0)$ where $x_c^2=(1+\frac{a^2}{c^2})(c^2+r^2)+(a+r)^2$. Thus, since $(1+\frac{a^2}{c^2})(c^2+r^2)<\rho_c^2$, we have from Equation~\ref{eq:intersection-substitutedz2} (proof of Lemma~\ref{lemma:tangency-rootc}) that there is not contact between $\mathcal{H}$ and $\mathcal{S}$. On the other hand, the roots of $f(\lambda)$ are $-a^2$, $c^2$ and $\lambda_\pm=\frac{1}{2} \left(-a^2-r^2+x_c^2\pm\sqrt{\left(a^2+r^2-x_c^2\right)^2-4 a^2 r^2}\right)$.
Now, substitute $x_c^2$ in the expression for $\lambda_\pm$ to see that  
\[
\lambda_\pm=\frac{1}{2} \left((1+\frac{a^2}{c^2})(c^2+r^2)+2ar\pm\sqrt{\left((1+\frac{a^2}{c^2})(c^2+r^2)+2ar\right)^2-4a^2r^2}\right).
\] 
Note that the radicand is positive, so $\lambda_\pm$ are two different real roots and, moreover, they are positive. Also, note from Lemma~\ref{lemma:rootc} that the condition for $c^2$ to be a multiple root is $r^2=-c^2+c^2 x_c^2/(a^2 + c^2)$, which is not satisfied for this value of $x_c^2$. Hence there are three positive distinct roots and one negative root for $f(\lambda)$ in this particular case.
\end{example}

Now we study different cases for $z_c=0$. Again we use cylindrical coordinates where $\rho_c=\sqrt{x_c^2+y_c^2}$. We distinguish when  $\rho_c=a+r$ and when  $\rho_c>a+r$. In the first case, a straightforward calculation gives the following lemma.
\begin{lemma}\label{lemma:TEs1}
Let the center of $\mathcal{S}$ be at $(\rho_c,\theta,0)$, for any $\theta\in[0,2\pi)$, with $\rho_c=a+r$. Then there is a tangent point at $(a,\theta,0)$ and some extra contact as in Picture TEs1 in Table 3 if $c^2<ar$. The roots of the characteristic polynomial in this case are $\lambda_1=-a^2$, $\lambda_2=c^2<\lambda_3=\lambda_4=ar$.
\end{lemma}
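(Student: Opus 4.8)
The plan is to work in the vertical plane $\theta=\mathrm{const}$ through the $OZ$ axis and the center of $\mathcal{S}$, since $z_c=0$ places that center in the plane $z=0$ and reduces the problem to the hyperbola and circle of \eqref{eq:intersection}. First I would verify that the point $(a,\theta,0)$ (i.e. $\rho=a$, $z=0$) lies on both curves: it satisfies the hyperbola equation trivially, and since the circle has center $(\rho_c,0)=(a+r,0)$ and radius $r$ it also satisfies $(\rho-\rho_c)^2+z^2=r^2$. Tangency there is immediate, because the two normal vectors $(\rho/a^2,-z/c^2)$ for $\mathcal{H}$ and $(\rho-\rho_c,z)$ for $\mathcal{S}$ both reduce to purely radial vectors when $z=0$ and are therefore automatically parallel; this yields the asserted tangent point. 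I would also note that the center $(a+r,\theta,0)$ is exterior to $\mathcal{H}$, since $(a+r)^2/a^2>1$, so the tangency is of exterior type.

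Next I would exhibit the extra contact. Substituting $z^2$ from the hyperbola into the circle gives the quadratic \eqref{eq:intersection-substitutedz2}; inserting $\rho_c=a+r$ one checks that $\rho=a$ is a root and, by Vieta, that the second root is $\rho^\ast=\frac{a(a^2+2ar-c^2)}{a^2+c^2}$. The hypothesis $c^2<ar$ is exactly equivalent to $\rho^\ast>a$, so $z^2=c^2((\rho^\ast)^2/a^2-1)>0$ and we obtain two further intersection points $(\rho^\ast,\theta,\pm z^\ast)$ with $z^\ast\neq0$. These are non-tangent: the parallelism condition for the normals at a point with $z\neq0$ is $\rho=a^2\rho_c/(a^2+c^2)$, exactly as in the proof of Lemma~\ref{lemma:tangency-rootc}, and a direct comparison gives $\rho^\ast-\frac{a^2\rho_c}{a^2+c^2}=\frac{a(ar-c^2)}{a^2+c^2}>0$, so $\rho^\ast$ does not satisfy it. Thus the cross-section shows one tangency at the waist together with two transversal contact points, which is precisely Picture TEs1.

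Finally, the roots follow from a direct factorization. Setting $z_c=0$ and $x_c^2+y_c^2=(a+r)^2$ in \eqref{eq:poly3}, the common factor $(c^2-\lambda)$ splits off and the remaining bracket $-(a^2+\lambda)(r^2+\lambda)+\lambda(a+r)^2$ collapses to $-(\lambda-ar)^2$, so $g(\lambda)=(\lambda-c^2)(\lambda-ar)^2$ and $f(\lambda)=\frac{(a^2+\lambda)(\lambda-c^2)(\lambda-ar)^2}{a^4c^2}$. Reading off the roots gives $\lambda_1=-a^2$, $\lambda_2=c^2$ and the double root $\lambda_3=\lambda_4=ar$, with $\lambda_2<\lambda_3$ exactly because $c^2<ar$.

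The step I expect to be most delicate is not the algebra but the geometric bookkeeping: confirming that the single double root $ar$ is the waist tangency while the two genuine transversal points supply the ``extra non-tangent contact,'' and checking that under the strict inequality $c^2<ar$ the values $-a^2$, $c^2$, $ar$ and the two contact points all stay distinct, so that the configuration is cleanly TEs1 and not a degeneration toward the triple-root situation $c^2=ar$ of Remark~\ref{remark:relationarc2}.
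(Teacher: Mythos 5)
Your proof is correct and is essentially the calculation the paper has in mind: the paper gives no explicit argument for this lemma (it is introduced with ``a straightforward calculation gives the following lemma''), and your verification fills it in using exactly the paper's own tools --- the cross-section equation~\eqref{eq:intersection-substitutedz2}, the normal-vector parallelism condition $\rho=a^2\rho_c/(a^2+c^2)$ from the proof of Lemma~\ref{lemma:tangency-rootc}, and the factorization of $g$ in~\eqref{eq:poly3}, which indeed yields $g(\lambda)=(\lambda-c^2)(\lambda-ar)^2$. All the computations check out, including the equivalence $\rho^\ast>a \iff c^2<ar$ for the extra non-tangent contact.
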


We study now the behaviour of the two unknown roots for the particular setting in which $\rho_c>a+r$. 

\begin{lemma}\label{lemma:planoxy}
Let the center of $\mathcal{S}$ be at $(\rho_c,\theta,0)$, for any $\theta\in[0,2\pi)$, with $\rho_c>a+r$. If $ar\leq c^2$ then the roots $\lambda_i$, $i=1,\dots,4$, satisfy 
$\lambda_1=-a^2<0<\lambda_2< ar<\lambda_3$, $\lambda_4=c^2$, whereas if $c^2<ar$, then one the following possibilities holds:
\begin{itemize}
\item $\mathcal{S}$ and $\mathcal{H}$ are in non-tangent contact (see Picture Cm in Table 3) and the roots $\lambda_i$, $i=1,\dots,4$, satisfy 
\[\lambda_1=-a^2<0<\lambda_2=c^2< \lambda_3<\lambda_4.\]
\item $\mathcal{S}$ and $\mathcal{H}$ are tangent (see Picture TEs in Table 3 and note that this tangency is realized in two points of a vertical ray) and the roots $\lambda_i$, $i=1,\dots,4$, satisfy 
\[\lambda_1=-a^2<0<\lambda_2=\lambda_3=c^2<\lambda_4.\]
\item $\mathcal{S}$ and $\mathcal{H}$ are not in contact (see Picture E in Table 1) and the roots $\lambda_i$, $i=1,\dots,4$, satisfy  
\[\lambda_1=-a^2<0<\lambda_2<\lambda_3=c^2<\lambda_4.\] 
\end{itemize}
\end{lemma}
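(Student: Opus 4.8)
The plan is to write down all four roots explicitly and then decide, case by case, which of the resulting root configurations actually carries a \emph{real} contact point. Since the center lies in the plane $z=0$ we have $z_c=0$, so Lemma~\ref{lemma:rootc} applies and, writing $\rho_c^2=x_c^2+y_c^2$, the polynomial $g$ factors as $g(\lambda)=-(c^2-\lambda)q(\lambda)$ with
\[
q(\lambda)=\lambda^2+(a^2+r^2-\rho_c^2)\lambda+a^2r^2 .
\]
Together with the root $-a^2$ from Lemma~\ref{lemma:tecnical-results-1}, the four roots of $f$ are $-a^2$, $c^2$ and the two roots $\mu_-\le\mu_+$ of $q$. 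First I would pin down $\mu_\pm$: by Vieta their product is $a^2r^2>0$ and their sum is $\rho_c^2-a^2-r^2$, which exceeds $2ar>0$ because $\rho_c>a+r$ gives $\rho_c^2>(a+r)^2$; the same inequality makes the discriminant $(\rho_c^2-a^2-r^2)^2-4a^2r^2$ strictly positive. Hence $\mu_\pm$ are real, distinct and positive, and since their product equals $(ar)^2$ they satisfy $0<\mu_-<ar<\mu_+$. This already produces three positive roots $c^2,\mu_-,\mu_+$ and the negative root $-a^2$, so the remaining work is to locate $c^2$ relative to $\mu_\pm$ and to decide which placements mean contact.

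For the contact analysis I would invoke the circular symmetry (Remark~\ref{remark:coordinates}): the distance from the center to $\mathcal{H}$ is minimized in the vertical plane $\theta=\theta_c$, so there is contact if and only if the circle and the hyperbola of \eqref{eq:intersection} meet, equivalently the quadratic in $\rho$ of \eqref{eq:intersection-substitutedz2} has a real root with $\rho\ge a$ (so that $z^2=c^2(\rho^2/a^2-1)\ge0$ gives a genuine point of $\mathcal{H}$). Two computations drive everything. Calling $P(\rho)$ the left-hand side of \eqref{eq:intersection-substitutedz2}, one checks $P(a)=(\rho_c-a)^2-r^2>0$ and $P(-a)=(\rho_c+a)^2-r^2>0$, since $\rho_c-a>r$. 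Secondly, the double-root value of $\rho$, attained exactly when the discriminant of \eqref{eq:intersection-substitutedz2} vanishes, i.e.\ (by Lemma~\ref{lemma:rootc}) exactly when $c^2$ becomes a multiple root, occurs at the vertex $\rho=a^2\rho_c/(a^2+c^2)$ with height $z^2=(a^2r^2-c^4)/(a^2+c^2)$, whose sign is that of $ar-c^2$.

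These facts split the proof at the threshold $c^2=ar$ of Remark~\ref{remark:relationarc2}. If $ar\le c^2$, the candidate tangency has $z^2\le0$, so the upward parabola $P$ has its vertex in $\rho<a$; combined with $P(\pm a)>0$ this confines every real root of \eqref{eq:intersection-substitutedz2} to $(-a,a)$, leaving no point of $\mathcal{H}$ on $\mathcal{S}$, so the position is the non-contact type E (the center, with $\rho_c^2/a^2-1>0$, is exterior). Since moreover $\mu_-<ar\le c^2$, the roots obey $-a^2<0<\mu_-<ar<\mu_+$ with $c^2$ playing the role of $\lambda_4$, as stated; note this covers the degenerate subcase $\mu_+=c^2$, where $f$ has an algebraic double root $c^2$ but only a \emph{complex} (hence non-real) tangency. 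If instead $c^2<ar$, then $z^2>0$ at the critical radius and, as one verifies from $c^2<ar$, the vertex lies in $\rho>a$ throughout $\rho_c>a+r$; with $P(a)>0$ this forces both roots of \eqref{eq:intersection-substitutedz2} to exceed $a$ whenever the discriminant is nonnegative. Thus contact occurs precisely when the discriminant is $\ge0$, equivalently $q(c^2)\ge0$, equivalently $c^2\le\mu_-$ (as $c^2<ar<\mu_+$ excludes $c^2\ge\mu_+$). The trichotomy $c^2\lessgtr\mu_-$ then yields the three exhaustive possibilities: $c^2<\mu_-$ (discriminant $>0$, two intersection points, non-tangent) is type Cm with $0<\lambda_2=c^2<\lambda_3=\mu_-<\lambda_4=\mu_+$; $c^2=\mu_-$ (discriminant $=0$) is the two-point tangency TEs of Lemma~\ref{lemma:tangency-rootc} with $0<\lambda_2=\lambda_3=c^2<\lambda_4$; and $\mu_-<c^2$ (discriminant $<0$) leaves $\mathcal{S}$ exterior, type E, with $0<\mu_-<c^2<\mu_+$ so that $\lambda_3=c^2$.

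The step I expect to be the main obstacle is exactly this bridge between algebra and geometry: a real, even multiple, root of the characteristic polynomial need not correspond to a real point of contact, because the $\rho$-quadratic can have real roots that fail the constraint $\rho\ge a$ defining $\mathcal{H}$. Controlling this through the sign of $P(a)=(\rho_c-a)^2-r^2$ together with the sign of $z^2=(a^2r^2-c^4)/(a^2+c^2)$ is what forces the dichotomy to fall precisely at $c^2=ar$ and, in particular, explains why for $ar\le c^2$ the formally double root $c^2$ of $f$ does not signal tangency.
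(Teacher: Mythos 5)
Your factorization of the roots and your treatment of the case $c^2<ar$ are correct, and your route genuinely differs from the paper's: the paper locates the two unknown roots $\lambda_\pm$ by a monotonicity argument in $\rho_c$ (they coincide with $ar$ at $\rho_c=a+r$ and then split monotonically) and then attaches the contact labels by citing Lemma~\ref{lemma:tangency-rootc}, whereas you reduce to the plane and observe that the discriminant of the quadratic \eqref{eq:intersection-substitutedz2} equals $4q(c^2)/a^2$, so that the trichotomy contact/tangency/no-contact can be read directly off the position of $c^2$ relative to $\mu_-$. That identity is not in the paper and makes the $c^2<ar$ half of your proof cleaner and more self-contained than the published one. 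Your remark on the degenerate subcase $\mu_+=c^2$ (an algebraic double root of $f$ with no real tangency) is also a sharp point which the paper's own Lemma~\ref{lemma:tangency-rootc} fails to account for, since its proof tacitly assumes the candidate tangency points are real.

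However, the case $ar\le c^2$ contains a genuine gap. You assert that for $ar\le c^2$ the parabola $P$ "has its vertex in $\rho<a$", and conclude that all real roots of \eqref{eq:intersection-substitutedz2} lie in $(-a,a)$. The vertex sits at $\rho^*=a^2\rho_c/(a^2+c^2)$, so your assertion is equivalent to $\rho_c<(a^2+c^2)/a$, which does not follow from $ar\le c^2$ and $\rho_c>a+r$: take $a=r=1$, $c^2=2$, $\rho_c=5$; then $ar\le c^2$ and $\rho_c>a+r$, yet $\rho^*=5/3>a$. The source of the error is that your formula $z^2=(a^2r^2-c^4)/(a^2+c^2)$ is valid only at the critical radius where the discriminant vanishes, i.e.\ when $\rho_c^2=(a^2+c^2)(r^2+c^2)/c^2$; it says nothing about the vertex position for other admissible $\rho_c$. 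The conclusion (no contact, type E) is still true, and your own identity repairs the argument: if $\rho^*\ge a$, then $c^2\rho_c^2\ge c^2(a^2+c^2)^2/a^2$, hence
\[
q(c^2)=(a^2+c^2)(c^2+r^2)-c^2\rho_c^2\le\frac{a^2+c^2}{a^2}\left(a^2r^2-c^4\right)\le 0,
\]
where equality throughout would force $c^2=ar$ and $\rho_c=(a^2+c^2)/a=a+r$, which is excluded; so the discriminant of \eqref{eq:intersection-substitutedz2} is strictly negative and there are no real intersections at all. Your proof needs this second branch of the case distinction ($\rho^*<a$ versus $\rho^*\ge a$) to be complete.
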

\proof
As a consequence of Lemmas~\ref{lemma:tecnical-results-1} and \ref{lemma:rootc} and because $z_c=0$, we have that $-a^2,c^2$ are roots of the characteristic polynomial. Hence a direct computation shows that the other two roots are given by 
$$\lambda_{\pm}=\frac{1}{2}\left(\rho_c^2-a^2-r^2\pm\sqrt{(\rho_c^2-a^2-r^2)^2-4a^2r^2}\right).$$
First note that $\lambda_{\pm}$ are real and positive. Moreover, $\lambda_+=\lambda_-$ when $\rho_c=a+r$; in this case, $\lambda_{\pm}=ar$, so the eigenvalue structure satisfies:
$$\lambda_1=-a^2<0<\lambda_2=c^2,\,0< ar=\lambda_3=\lambda_4.$$

We turn our attention to the case $\rho_c>a+r$.
To study the behaviour of $\lambda_{\pm}$ as $\rho_c$ varies, we compute
 \[
 \frac{d}{d\rho_c^2}\lambda_\pm=\frac{1}{2}\left( 1\pm\frac{\rho_c^2-a^2-r^2}{\sqrt{\left(\rho_c^2-a^2-r^2\right)^2-4 a^2 r^2}}\right)
 \]
to see that $\dfrac{d}{d\rho_c^2}\lambda_+>0$ and $\dfrac{d}{d\rho_c^2}\lambda_-<0$ for $\rho_c>a+r$, since $\rho_c^2-a^2-r^2>0$ and $|\rho_c^2-a^2-r^2|>|\sqrt{\left(\rho_c^2-a^2-r^2\right)^2-4 a^2 r^2}|$. 
Hence, the eigenvalue $\lambda_+$ increases whereas $\lambda_-$ decreases as $\rho_c$ increases. As a consequence (using also Lemma~\ref{lemma:tangency-rootc}), if $ar\leq c^2$, then the eigenvalues configuration is 
\[
\lambda_1=-a^2<0<\lambda_2< ar<\lambda_3, \quad\lambda_4=c^2.
\]
However, if $c^2<ar$, using Lemma~\ref{lemma:tangency-rootc}, we have the following possibilites:
\begin{itemize}
	\item $\lambda_1=-a^2<0<\lambda_2=c^2<\lambda_3<ar<\lambda_4$ corresponding to non-tangent contact,
	\item $\lambda_1=-a^2<0<\lambda_2=c^2=\lambda_3<ar<\lambda_4$ corresponding to the tangent case, and
	\item $\lambda_1=-a^2<0<\lambda_2<\lambda_3=c^2<ar<\lambda_4$ corresponding to non contact.	
\end{itemize}
This completes the proof of the lemma.
\qed

\begin{lemma}\label{lemma:multipleroottangent}
Let $\lambda\neq -a^2$ be a real root of $f(\lambda)$. If the multiplicity of $\lambda$ is $m\geq 2$, then there exists at least one point where $\mathcal{H}$ and $\mathcal{S}$ are tangent.
\end{lemma}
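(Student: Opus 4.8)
The plan is to exploit the observation (made in the remark following the definition of $f$) that a root $\lambda_0$ of $f$ is an eigenvalue of $-H^{-1}S$: there is a nonzero $X$ with $(\lambda_0 H+S)X=0$, that is, $SX=-\lambda_0 HX$. Since $H$ is invertible we have $HX\neq 0$, so this single identity already forces the gradients of the two quadrics at $X$ to be parallel; and because $0$ is not a root (Lemma~\ref{lemma:tecnical-results-1}) we have $\lambda_0\neq 0$, so $Y^tSX=Y^t(-\lambda_0 HX)=-\lambda_0 Y^tHX$ shows that $Y^tHX=0$ exactly when $Y^tSX=0$ for every $Y$. Thus $X$ is a tangency point as soon as it is a contact point. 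Contracting $SX=-\lambda_0 HX$ with $X^t$ gives $X^tSX=-\lambda_0\,X^tHX$, so the whole problem reduces to producing a null vector $X$ of $M(\lambda_0):=\lambda_0 H+S$ that is $H$-isotropic, i.e. $X^tHX=0$ (which then forces $X^tSX=0$ as well).

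So the multiplicity hypothesis must be used precisely to make some null vector $H$-isotropic. Write $\alpha=(\lambda_0+a^2)/a^2$ and $\beta=(c^2-\lambda_0)/c^2$; the leading $3\times 3$ minor of $M(\lambda_0)$ (rows and columns $1,2,3$) is diagonal and equals $\alpha^2\beta$. Assume first $\lambda_0\neq c^2$ (the value $-a^2$ being excluded by hypothesis), so $\alpha\neq 0$ and $\beta\neq 0$; then this minor is nonzero, $M(\lambda_0)$ has corank exactly one, and any null vector $X$ has nonzero last coordinate (otherwise rows $1,2,3$ force $X=0$), hence normalizes to a genuine finite point $(x,y,z,1)$. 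For a symmetric matrix of corank one with kernel spanned by $X$ one has $\operatorname{adj}(M(\lambda_0))=\gamma\,XX^t$ with $\gamma\neq 0$, so Jacobi's formula yields $f'(\lambda_0)=\operatorname{tr}(\operatorname{adj}(M(\lambda_0))\,H)=\gamma\,X^tHX$. Since $m\geq 2$ forces $f'(\lambda_0)=0$, we obtain $X^tHX=0$, and therefore $\mathcal{H}$ and $\mathcal{S}$ are tangent at $X$ by the first paragraph.

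The only configuration this argument does not reach is $\lambda_0=c^2$, and this is exactly where the difficulty concentrates. When $c^2$ is a multiple root the corank of $M(c^2)$ jumps to two, the adjugate vanishes identically, and the identity $f'(c^2)=\gamma\,X^tHX$ degenerates to $0=0$ and carries no information; one can no longer single out an isotropic null vector by this linear-algebra device. For this case I would instead invoke Lemma~\ref{lemma:tangency-rootc}, which already establishes directly that a multiple root equal to $c^2$ forces tangency (at two symmetric points of a vertical ray when $c^2<ar$, and at one point of the plane $z=0$ when $c^2=ar$). Combining the two cases covers every real root $\lambda_0\neq -a^2$ of multiplicity at least two and proves the lemma. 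The main obstacle is thus not the generic corank-one computation, which is routine once the adjugate--Jacobi identity is set up, but recognizing and isolating the corank-two degeneracy at $\lambda_0=c^2$, where the general mechanism collapses and one must fall back on the explicit geometric analysis of Lemma~\ref{lemma:tangency-rootc}.
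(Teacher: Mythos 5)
Your proposal is correct, and its skeleton coincides with the paper's: both reduce tangency to exhibiting a kernel vector of $\lambda_0 H+S$ that is $H$-isotropic (which then automatically lies on both quadrics and has proportional, nonzero normals $HX$ and $SX=-\lambda_0HX$), both observe that for $\lambda_0\notin\{-a^2,c^2\}$ the matrix $\lambda_0 H+S$ has rank exactly $3$, and both dispose of the exceptional value $\lambda_0=c^2$ by citing Lemma~\ref{lemma:tangency-rootc}. Where you genuinely diverge is in how the multiplicity hypothesis is converted into isotropy. The paper works with the Jordan structure of $-H^{-1}S$: since the geometric multiplicity is $1$ while the algebraic multiplicity is at least $2$, there is a generalized eigenvector $Y\in E^2\setminus E^1$, and setting $X_0=(\lambda I+H^{-1}S)Y$ one gets
\[
X_0^tHX_0=X_0^t(\lambda H+S)Y=[(\lambda H+S)X_0]^tY=0
\]
by symmetry of $\lambda H+S$. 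You instead identify $f'(\lambda_0)$ with $\gamma\,X^tHX$ via the rank-one adjugate identity $\operatorname{adj}(\lambda_0H+S)=\gamma\,XX^t$ and Jacobi's formula, so that the condition $f'(\lambda_0)=0$ coming from $m\geq 2$ forces $X^tHX=0$. Both mechanisms are rigorous and of comparable length; yours uses the definition of multiplicity (vanishing of $f'$) in the most direct way and makes transparent exactly why the generic argument must break down at $\lambda_0=c^2$ (once the corank jumps to two the adjugate vanishes identically and the identity carries no information), whereas the paper's route avoids the adjugate and Jacobi's formula at the price of invoking generalized eigenspaces. A small bonus of your write-up: you verify that the kernel vector has nonzero last homogeneous coordinate, i.e.\ that it normalizes to a finite point rather than a point at infinity, a detail the paper's proof leaves implicit.
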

\proof
If $\lambda=c^2$, then the result follows by Lemma~\ref{lemma:tangency-rootc}. Assume $\lambda\notin \{-a^2,c^2\}$ is a real root of $f(\lambda)$. We study the rank of $\lambda H+S$ triangularizing the matrix to obtain:
\[
\lambda H+S \simeq
\left(   \begin{array}{rrrc}
\frac{\lambda+a^2}{a^2}&0&0&-x_c\\
0&\frac{\lambda+a^2}{a^2}&0&-y_c\\
0&0&\frac{-\lambda+c^2}{c^2}&-z_c \\
0&0&0&-\lambda-r^2+\frac{\lambda}{\lambda+a^2}(x_c^2+y_c^2)+\frac{\lambda}{\lambda-c^2}z_c^2
\end{array} \right ).
\]
We have $\operatorname{rank}(\lambda H+S)=3$. Let $E^i=\operatorname{ker}(\lambda I+H^{-1}S)^i$ be the generalized eigenspace of order $i$. If $\lambda$ is a root of multiplicity greater than $1$, since $\operatorname{rank}(\lambda H+S)=3$, we have that $1=\operatorname{dim}(E^1)<\operatorname{dim}(E^2)$. Hence, there exists $Y\in E^2-E^1.$  Let $X_0=(\lambda I+H^{-1}S)Y=H^{-1}(\lambda H +S)Y\neq 0.$ 
  
Since  $(\lambda I+H^{-1}S)X_0=(\lambda I+H^{-1}S)^2Y=0$ we have that $X_0\in E^1$. Now we check that 
$X_0^tHX_0=0$ as follows:
\[
X_0^tHX_0=X_0^t (\lambda H + S) Y=
  [(\lambda H + S)X_0]^tY=[H(\lambda I + H^{-1}S)X_0]^tY=0,
\]
where we have used that $\lambda H + S$ is symmetric. Since $X_0^t(\lambda H+S)X_0=0$, it follows that $X_0^tSX_0=0$. Therefore $X_0$ is a point of the two quadrics. Also, since $(\lambda H+S)X_0=0$, it follows that $ -\lambda HX_0=SX_0$ so $H$ and $S$ are indeed tangent at $X_0$.
\qed

\begin{lemma}\label{lemma:tangent-implies-multipleroot}
If $\mathcal{H}$ and $\mathcal{S}$ are tangent, then there exists a multiple real root of the characteristic polynomial.
\end{lemma}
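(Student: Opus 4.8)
My plan is to reverse the reasoning of Lemma~\ref{lemma:multipleroottangent}: from a tangency point I would first extract a genuine real root of $f$, and then produce a generalized eigenvector forcing that root to have algebraic multiplicity at least $2$. So suppose $\mathcal{H}$ and $\mathcal{S}$ are tangent at a point $X$. Since $X$ is a smooth point of each quadric, the normal vectors $HX$ and $SX$ are both nonzero. The tangency hypothesis $Y^tHX=0\Leftrightarrow Y^tSX=0$ says exactly that the two linear functionals $Y\mapsto Y^tHX$ and $Y\mapsto Y^tSX$ cut out the same hyperplane, so their representing vectors are proportional: $SX=\mu HX$ for some $\mu\in\mathbb{R}$, with $\mu\neq 0$ because $SX\neq 0$. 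Setting $\lambda_0:=-\mu$, I get $(\lambda_0 H+S)X=\lambda_0 HX+SX=0$, so $\lambda_0$ is a real (nonzero) root of $f$ and $X$ is an eigenvector of $-H^{-1}S$ for $\lambda_0$. The crucial extra datum, coming from $X$ being a contact point, is the relation $X^tHX=0$.

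To upgrade $\lambda_0$ to a \emph{multiple} root I would argue on the generalized eigenspaces $E^i=\operatorname{ker}(\lambda_0 I+H^{-1}S)^i$, exactly as in Lemma~\ref{lemma:multipleroottangent}. If $\operatorname{dim}E^1\geq 2$, then the geometric multiplicity already forces the algebraic multiplicity to be at least $2$ and $\lambda_0$ is a multiple root. Otherwise $E^1=\langle X\rangle$, and I would exhibit a generalized eigenvector as follows. Write $N=\lambda_0 I+H^{-1}S$. Solving $N^tZ=0$ with $Z=HW$ reduces to $(\lambda_0 H+S)W=0$, so $\operatorname{ker}N^t=\langle HX\rangle$, whence $\operatorname{Im}N=(\operatorname{ker}N^t)^{\perp}=\{V:X^tHV=0\}$. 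Since $X^tHX=0$, we get $X\in\operatorname{Im}N$, so there is $Y$ with $NY=X$. Then $N^2Y=NX=0$ but $NY=X\neq 0$, so $Y\in E^2\setminus E^1$; hence $\operatorname{dim}E^2>\operatorname{dim}E^1$ and the algebraic multiplicity of $\lambda_0$ is at least $2$. In either case $\lambda_0$ is a multiple real root of $f$, as claimed.

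Alternatively, one can dispense with the case split via the adjugate: when $\operatorname{rank}(\lambda_0 H+S)=3$ the adjugate of this symmetric matrix is a symmetric rank-one matrix supported on the kernel, so $\operatorname{adj}(\lambda_0 H+S)=\gamma\,XX^t$, and Jacobi's formula gives $f'(\lambda_0)=\operatorname{tr}(\operatorname{adj}(\lambda_0 H+S)\,H)=\gamma\,X^tHX=0$, again forcing a multiple root. The main obstacle, and the only delicate point, is the faithful translation of the geometric tangency condition into the algebraic identity $SX=\mu HX$ together with the verification that $HX$ and $SX$ are genuinely nonzero so that $\lambda_0$ is a bona fide root; once the identity $X^tHX=0$ is secured, the passage to a generalized eigenvector (or the adjugate computation) is routine and mirrors the proof of Lemma~\ref{lemma:multipleroottangent}.
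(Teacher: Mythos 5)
Your proof is correct, but it takes a genuinely different route from the paper. The paper argues by contradiction: starting from the tangency relation $SX_0=-\alpha_0HX_0$, it assumes all four roots are simple, takes eigenvectors $X_0,X_1,X_2,X_3$ for the four distinct eigenvalues (treating the complex-conjugate pair by passing to the real span of $\{X_2+X_3,\,iX_2-iX_3\}$), uses the relation $(\alpha_0-\alpha_i)X_i^tHX_0=0$ to get $X_i^tHX_0=0$ for $i\neq 0$ and the contact condition $X_0^tHX_0=0$ for $i=0$, and concludes $HX_0=0$, contradicting the regularity of $H$. You instead argue directly: having extracted the real root $\lambda_0$ from the tangency point $X$, you show its algebraic multiplicity is at least $2$, either because $\operatorname{dim}\operatorname{ker}(\lambda_0 H+S)\geq 2$, or by producing a generalized eigenvector via the identity $\operatorname{Im}N=(\operatorname{ker}N^t)^{\perp}$ combined with the isotropy $X^tHX=0$; your adjugate/Jacobi computation $f'(\lambda_0)=\gamma\,X^tHX=0$ makes the same point even more economically. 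Both proofs hinge on the same two facts (the eigenvector relation $SX=\mu HX$ and the isotropy $X^tHX=0$), but your argument yields strictly more information: it shows that the root \emph{attached to the tangency point} is itself multiple, whereas the paper's contradiction only establishes that some multiple root exists; your construction also genuinely reverses the paper's proof of Lemma~\ref{lemma:multipleroottangent}, making the two lemmas visibly dual to one another. The price is that you invoke slightly more machinery (generalized eigenspaces and the image--cokernel duality, or the adjugate identity), while the paper's argument, though indirect and requiring a separate case for complex roots, uses nothing beyond linear independence of eigenvectors for distinct eigenvalues.
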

\proof
Assume $\mathcal{H}$ and $\mathcal{S}$ are tangent at $X_0$, then $X_0^tHX_0=X_0^tSX_0=0$ and $SX_0=-\alpha_0 HX_0$ for a certain $\alpha_0\in \mathbb{R}$. Hence $(\alpha_0H+S)X_0=0$, so $\alpha_0$ is a root of $f$. We argue by contradiction to show that there exists a multiple root of $f$. If there are no multiple roots, then there are $4$ simple real roots or $2$ simple real roots and $2$ complex conjugate roots. 

Assume there are $4$ different roots $\alpha_i$, either all of them real or two of them complex conjugate, with eigenvectors $X_i$ for $i=0,1,2,3$. The vectors $X_0,X_1,X_2,X_3$ are linearly independent. Since $X_i$ are eigenvectors, we have $X_i^t(\alpha_iH+S)X_0=0$ for all $i=0,1,2,3$. Hence, for $i=1,2,3,$ we have the relations
\[
\begin{array}{rcl}
\alpha_0 X_i^tHX_0+X_i^t S X_0=0,\\
\noalign{\medskip}
\alpha_i X_i^tHX_0+X_i^t S X_0=0.
\end{array}
\]
Substract to get that $(\alpha_0-\alpha_i)X_i^tHX_0=0$. Since $\alpha_0\neq\alpha_i$ we have that $X_i^tHX_0=0$ and, hence, $X_i^t S X_0=0$ too. So $X_i^tHX_0=0$ for $i=0,1,2,3$. If $X_i$ are all real eigenvectors this implies that $HX_0=0$ as $X_0,X_1,X_2,X_3$ are linearly independent. This is a contradiction as $H$ is regular and $X_0\neq0$. If two of the eigenvalues are complex conjugate, suppose without loss of generality that $\alpha_3=\bar\alpha_2$, so $X_3=\bar X_2$. Consider the set of real vectors $\{X_0,X_1,X_2+X_3,iX_2-iX_3\}$ and observe that $\operatorname{span}\{X_0,X_1,X_2+X_3,iX_2-iX_3\}=\operatorname{span}\{X_0,X_1,X_2,X_3\}$. Now we obtain the same contradiction as before, i.e. $HX_0=0$, that shows this case is not possible either. Therefore the result follows.
\qed

As a consequence of the previous lemmas, we have that the tangency condition is identified in terms of multiple roots. We state the following theorem.

\begin{theorem}\rm\label{th:non-tangency}
$\mathcal{H}$ and $\mathcal{S}$ are tangent if and only if one of the following possibilities holds:
\begin{enumerate}
\item there exists one multiple root $\lambda\neq -a^2$,
\item $-a^2$ is a triple root.
\end{enumerate}
\end{theorem}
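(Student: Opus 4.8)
The plan is to deduce the theorem by assembling the three immediately preceding lemmas, splitting the biconditional into its two implications and organizing each according to which root carries the multiplicity. The only additional inputs I need are Lemma~\ref{lemma:tecnical-results-1} (to control where multiplicities can live), Lemma~\ref{lemma:roota} (to detect when $-a^2$ is multiple), and Lemma~\ref{lemma:eixoz} (to resolve the delicate on-axis case); everything else is already packaged in Lemmas~\ref{lemma:rootatangent}, \ref{lemma:multipleroottangent} and \ref{lemma:tangent-implies-multipleroot}.

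For the sufficiency, assume (1) or (2) and derive tangency. If $-a^2$ is a triple root, then Lemma~\ref{lemma:rootatangent} gives tangency along a circumference directly. If instead there is a multiple root $\lambda\neq -a^2$, I would first observe that such a $\lambda$ must be real: by Lemma~\ref{lemma:tecnical-results-1} the quartic $f$ always has the real root $-a^2$ together with at least one positive real root, so at most one conjugate pair can occur among the remaining roots and any complex root is therefore simple; a multiple root is forced to be real. Once $\lambda\neq -a^2$ is known to be a real multiple root, Lemma~\ref{lemma:multipleroottangent} applies verbatim and produces a tangency point, completing this direction.

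For the necessity, assume $\mathcal{H}$ and $\mathcal{S}$ are tangent. By Lemma~\ref{lemma:tangent-implies-multipleroot} there exists a multiple real root $\mu$. I would then split on whether $\mu=-a^2$. If $\mu\neq -a^2$, we are in case (1) and there is nothing more to check. If $\mu=-a^2$, then $-a^2$ is a multiple root, so Lemma~\ref{lemma:roota} forces $x_c=y_c=0$, i.e.\ the center of $\mathcal{S}$ lies on the $OZ$ axis; invoking the complete enumeration of Lemma~\ref{lemma:eixoz} for on-axis centers, the only tangent configuration there is the one in which $-a^2$ is a triple root, so case (2) holds. Since the conclusion is a disjunction, it suffices that one alternative is satisfied in each branch.

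The main obstacle is precisely this last step of the necessity: ruling out the possibility that tangency coexists with $-a^2$ being merely a \emph{double} root. Lemma~\ref{lemma:tangent-implies-multipleroot} by itself does not distinguish a double $-a^2$ from a triple one, and a double $-a^2$ genuinely occurs in non-tangent configurations (types I and Ca), so one cannot simply read off case (2) from the multiplicity of $-a^2$. The resolution is to exploit that a multiple $-a^2$ pins the center to the $OZ$ axis (Lemma~\ref{lemma:roota}) and then to use the exhaustive on-axis classification of Lemma~\ref{lemma:eixoz}, which identifies tangency on the axis with the triple-root case alone. This is the one place where the geometric analysis, rather than a purely algebraic count of multiplicities, is indispensable.
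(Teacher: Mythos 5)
Your proof is correct and follows essentially the same route as the paper's: sufficiency via Lemmas~\ref{lemma:rootatangent} and \ref{lemma:multipleroottangent}, and necessity via Lemma~\ref{lemma:tangent-implies-multipleroot} together with Lemmas~\ref{lemma:roota} and \ref{lemma:eixoz} to upgrade a multiple root $-a^2$ to a triple one. Your extra observation that any multiple root must be real (so that Lemma~\ref{lemma:multipleroottangent} indeed applies) is a small point the paper leaves implicit, but it does not alter the argument.
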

\proof
Lemmas~\ref{lemma:multipleroottangent} and \ref{lemma:rootatangent} imply that if we have a multiple root $\lambda\neq -a^2$ or $-a^2$ is a triple root, then there is at least one tangent point between $\mathcal{S}$ and $\mathcal{H}$. Conversely, Lemma~\ref{lemma:tangent-implies-multipleroot} shows that if there is a tangent point, then there exist a multiple root. Moreover, this multiple root is a triple root if it is $-a^2$, as we saw in Lemmas~\ref{lemma:roota} and \ref{lemma:eixoz}. 
\qed

\subsection{Moving sphere}\label{subsect:moving-sphere}
\mbox{}

In this section we use a technique based on a moving sphere. Let us move the sphere $\mathcal{S}$ translating its center along a parametrized curve $\alpha(t)$, with $t\in [a,b]$, and denote it by $S(t)$. Define $f_t(\lambda)=\operatorname{det}(\lambda H+S(t))$ as the characteristic polynomial $f(\lambda)$ for $\mathcal{S}$ centered at $\alpha(t)$.

\begin{lemma}\label{lemma:complexroots}
\begin{enumerate}
\item Let $p(t)=\alpha_2(t) \lambda^2+\alpha_1(t) \lambda+\alpha_0(t)$ a polynomial of degree $2$ whose coefficients are continuous functions of $t\in [t_0,t_1]$. If $p(t_0)$ has two distinct real roots and $p(t_1)$ has complex conjugate roots, then there exists $t_d\in (t_0,t_1)$ such that $p(t_d)$ has a double root.
\item Let $S(t)$ be a moving sphere for $t\in [t_0,t_1]$ and $\mathcal{H}$ a fixed hyperboloid. If  $f_{t_0}(\lambda)$ has four real roots and $f_{t_1}(\lambda)$ has two real  and two complex roots, then there exists $t_d\in (t_0,t_1)$ such that $f_{t_d}(\lambda)$ has a multiple root.
\end{enumerate}
\end{lemma}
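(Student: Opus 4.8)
The plan is to prove the two assertions in sequence, with the first serving as a warm-up and conceptual prototype for the second. For assertion~(1), the natural quantity to track is the discriminant of the quadratic, namely $D(t)=\alpha_1(t)^2-4\alpha_2(t)\alpha_0(t)$. Since the coefficients $\alpha_i(t)$ are continuous on $[t_0,t_1]$, the function $D(t)$ is continuous as well. The hypothesis that $p(t_0)$ has two distinct real roots means $D(t_0)>0$, whereas the hypothesis that $p(t_1)$ has complex conjugate roots means $D(t_1)<0$. By the Intermediate Value Theorem there exists $t_d\in(t_0,t_1)$ with $D(t_d)=0$, and a vanishing discriminant is precisely the condition for $p(t_d)$ to have a double (real) root. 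This establishes~(1).

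For assertion~(2), I would reduce to the quadratic case by exploiting the factorization already available from Lemma~\ref{lemma:tecnical-results-1}. Recall that $f_t(\lambda)=\frac{(a^2+\lambda)g_t(\lambda)}{a^4c^2}$, where $g_t$ is a cubic whose coefficients depend continuously on $t$ because they depend polynomially on the center coordinates $(x_c,y_c,z_c)=\alpha(t)$, which vary continuously. Thus $-a^2$ is always a root, and the remaining three roots are the roots of $g_t$. The hypothesis that $f_{t_0}$ has four real roots forces $g_{t_0}$ to have three real roots, while the hypothesis that $f_{t_1}$ has two real and two complex roots forces $g_{t_1}$ to have one real root and a pair of complex conjugate roots (the complex pair cannot include $-a^2$). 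The governing quantity is now the discriminant $\Delta_g(t)$ of the cubic $g_t$, which is a continuous function of $t$; a cubic has three real roots (counted with multiplicity) exactly when its discriminant is nonnegative and a pair of complex conjugate roots exactly when its discriminant is negative. Hence $\Delta_g(t_0)\geq 0$ while $\Delta_g(t_1)<0$, and the Intermediate Value Theorem yields $t_d\in(t_0,t_1)$ with $\Delta_g(t_d)=0$, which means $g_{t_d}$ has a multiple root and therefore $f_{t_d}$ has a multiple root.

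The main subtlety to handle carefully is the bookkeeping of which root is which. One must verify that the complex conjugate pair appearing in $f_{t_1}$ is genuinely a pair of roots of the cubic $g_{t_1}$, and not an artifact involving the fixed real root $-a^2$; this is immediate since $-a^2$ is real, so any complex roots must come from $g_t$. A second point worth stating explicitly is that the continuity of $\Delta_g$ in $t$ follows from the continuity of the coefficients of $g_t$ together with the fact that the discriminant is a polynomial in those coefficients; I do not expect this to require more than a sentence. Beyond this, the argument is a direct transfer of the discriminant-sign-change idea from~(1), so I anticipate no serious obstacle.
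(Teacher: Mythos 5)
Your part (1) coincides with the paper's proof: the discriminant $d(t)=\alpha_1(t)^2-4\alpha_2(t)\alpha_0(t)$ is continuous, $d(t_0)>0$, $d(t_1)<0$, and Bolzano's theorem gives the interior zero. Part (2) is where you genuinely diverge. The paper also factors $f_t(\lambda)=(a^2+\lambda)\,g_t(\lambda)/(a^4c^2)$, but then it reduces (2) to (1): it selects a root $\lambda_2(t)$ of $g_t$ that stays real on a maximal interval $[t_0,t_1']$ at whose right end the other two roots of $g_t$ are complex conjugate, divides out $(\lambda+a^2)(\lambda-\lambda_2(t))$, and applies part (1) to the resulting quadratic, whose coefficients are continuous because the roots of a polynomial depend continuously on its coefficients. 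That argument needs continuity of individual roots as functions of $t$ plus the maximal-interval selection. You bypass all root-tracking by using the discriminant $\Delta_g(t)$ of the cubic factor itself: it is a polynomial in the coefficients of $g_t$, hence manifestly continuous; the standard trichotomy (nonnegative exactly when all three roots are real, negative exactly when there is one real root and a conjugate pair) turns your hypotheses into $\Delta_g(t_0)\geq 0$ and $\Delta_g(t_1)<0$, and the Intermediate Value Theorem produces the vanishing point, i.e.\ the multiple root. Your route is more self-contained and avoids the most delicate step of the paper's proof (the continuous global selection of a real root along the path, which the paper justifies in a single sentence); what the paper's route buys is that part (1) does real work in proving part (2), and it never needs the cubic discriminant formula --- though the trichotomy you invoke is essentially the same machinery the paper itself uses in Remark~\ref{remark:Cardano}, up to a sign convention.

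One caveat, which your proof shares with the paper's rather than introduces: if the four real roots of $f_{t_0}$ include a multiplicity among the roots of $g_{t_0}$, then $\Delta_g(t_0)=0$ and the IVT only places $t_d$ in $[t_0,t_1)$; conceivably the unique parameter with a multiple root is $t_0$ itself, not a point of the open interval $(t_0,t_1)$ that the statement demands. The paper's proof has the identical defect, since applying part (1) requires the quotient quadratic to have two \emph{distinct} real roots at $t_0$. In all of the paper's applications (Lemmas~\ref{lemma:nocontact}, \ref{lemma:contact-implies-complexroots} and \ref{lemma:contact-2connectedcomponents}) the lemma is used along paths on which multiple roots are excluded by the geometry, so neither proof's conclusion is harmed in practice; but to make your write-up airtight as stated, either add the hypothesis that the four real roots at $t_0$ are simple (so $\Delta_g(t_0)>0$) or weaken the conclusion to $t_d\in[t_0,t_1)$.
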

\proof Since $\alpha_i$ are continuous functions for $i=0,1,2$, so is the discriminant $d(t)=\alpha_1(t)^2-4\alpha_0(t)\alpha_2(t)$. Since $d(t_0)>0$ and $d(t_1)<0$, the result follows as a consequence of Bolzano's Theorem.

Note first that $-a^2$ is a root. Let $\lambda_2(t)$, $\lambda_3(t)$ and $\lambda_4(t)$ be the other three roots. For every $t\in [t_0,t_1]$, at least one of $\lambda_2(t)$, $\lambda_3(t)$, $\lambda_4(t)$ is real.  Assume without loss of generality that $\lambda_2(t)$ is real for $[t_0,t_1^\prime]$ with $t_1^\prime$ maximal so that $\lambda_3(t_1^\prime)$, $\lambda_4(t_1^\prime)$ are complex conjugate. 
We consider the polynomial $\tilde f(t)=f_t(\lambda)/((x+a^2)(x-\lambda_2(t)))$ defined for  $t\in[t_0,t_1^\prime]$. $\tilde f$ is a polynomial of degree $2$ whose coefficients are continuous functions of $t$, since the roots of a polynomial with continuous functions coefficients are continuous.  
Hence we apply (1) to $\tilde f$ in $[t_0,t_1^\prime]$ to obtain (2).
\qed

\subsection{Characterization of relative positions}\label{subsect:characterization-relative-positions}

In this section we use the previous lemma to prove the result that  characterizes the different relative positions between $\mathcal{S}$ and $\mathcal{H}$. We begin by giving some examples which will be useful in the proofs.

\begin{example}\label{example:complex-roots}
There are two examples of special significance for the subsequent analysis of complex roots.
\begin{enumerate}
\item Let $\mathcal{S}$ be the sphere with radious $r$, such that $r<2a$, and center at $(a,0,0)$. It is easy to check that there is contact between $\mathcal{S}$ and $\mathcal{H}$. We study the roots of $f(\lambda)$ to see that they are $-a^2$, $c^2$ and $\lambda_\pm=\frac{1}{2} \left(-r^2\pm r\sqrt{r^2-4 a^2}\right)$. So we have that $\lambda_\pm$ are complex roots (because $r<2a$).
\item Let $\mathcal{S}$ be the sphere with radious $r$, such that $a<2r$, and center at $(r,0,0)$. Again, there is contact between $\mathcal{S}$ and $\mathcal{H}$. The roots of $f(\lambda)$ are $-a^2$, $c^2$ and $\lambda_\pm=\frac12\left(-a^2\pm a \sqrt{a^2-4  r^2}\right)$. Hence, as $a<2r$ we have that $\lambda_\pm$ are complex conjugate roots.
\end{enumerate}
\end{example}

We characterize the non-contact possible situations between $\mathcal{S}$ and $\mathcal{H}$ in the following lemma.

\begin{lemma}\label{lemma:nocontact}
Assume there is no contact between $\mathcal{S}$ and $\mathcal{H}$. Then one of the following holds:
\begin{enumerate}
\item If $\mathcal{S}$ is interior to $\mathcal{H}$, then there is a positive root, $-a^2$ is a root and there are two negative distinct roots $\lambda_2$, $\lambda_3$ such that $-a^2\leq \lambda_2<\lambda_3$.
\item If $\mathcal{S}$ is exterior to $\mathcal{H}$, then there are 3 positive distinct roots. Moreover, these roots $\lambda_2$, $\lambda_3$ and $\lambda_4$ satisfy $0<\lambda_2<\lambda_3\leq c^2 <\lambda_4$
or $0<\lambda_2<\lambda_3< c^2\leq\lambda_4$.
\end{enumerate}
\end{lemma}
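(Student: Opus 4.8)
The plan is to extract the qualitative sign information about the roots from the no-contact hypothesis, combined with the structural facts already established in Lemmas~\ref{lemma:tecnical-results-1}, \ref{lemma:distribucion-raices} and \ref{lemma:tangent-implies-multipleroot}, and then to pin down the interlacing with $-a^2$ and $c^2$ by a continuity (moving sphere) argument anchored at the explicit configurations computed earlier.

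First I would record the general structure. By Lemma~\ref{lemma:tecnical-results-1} we always have $\lambda_1=-a^2$, $0$ is never a root, the product of the roots is $-a^4c^2r^2<0$, and there is at least one positive root $\lambda_4>0$. Since $\mathcal{S}$ and $\mathcal{H}$ are assumed not to be tangent (no contact certainly excludes tangency), Lemma~\ref{lemma:tangent-implies-multipleroot} in its contrapositive form tells us there is \emph{no} multiple real root, so all real roots are simple. The negative product of the roots then forces the number of negative real roots to be odd among any real roots present.

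Next I would split on whether $\mathcal{S}$ is interior or exterior, using the sign of $X^tHX$ on $\mathcal{S}$ together with a moving-sphere deformation (Section~\ref{subsect:moving-sphere}). For the interior case (1), I would argue that a sphere deep in the interior can be continuously shrunk/translated to one centered on the $OZ$-axis with $r<a$; by Lemma~\ref{lemma:eixoz} that reference configuration has roots $-a^2=\lambda_1=\lambda_2<\lambda_3<0<\lambda_4$, i.e. two negative roots (one equal to $-a^2$) and one positive. Since no contact is maintained along the path, no root can cross $0$ (that would require $f(0)=-r^2=0$, impossible) and no pair of real roots can collide into a double root (that would signal tangency, excluded); hence the count ``two negative, one positive, plus $-a^2$'' is preserved, giving two distinct negative roots $\lambda_2<\lambda_3$. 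The interlacing $-a^2\le\lambda_2$ follows from Lemma~\ref{lemma:distribucion-raices}(1): with three negative roots $-a^2,\lambda_2,\lambda_3$ the only admissible orderings are $\lambda_2\le\lambda_3\le-a^2$ or $-a^2\le\lambda_2<\lambda_3<0$, and the interior reference configuration selects the latter. For the exterior case (2), the analogous reference is an exterior sphere centered in the plane $z=0$ with $ar\le c^2$, whose roots by Lemma~\ref{lemma:planoxy} are $-a^2<0<\lambda_2<ar<\lambda_3$, $\lambda_4=c^2$: three distinct positive roots. Deforming any exterior non-contact sphere to this reference while preserving no-contact again forbids root collisions and zero-crossings, so all three remaining roots stay positive and distinct; the two admissible orderings relative to $c^2$ are exactly those in Lemma~\ref{lemma:distribucion-raices}(2), which I would rewrite as the two displayed alternatives.

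The main obstacle I anticipate is justifying that every interior (respectively exterior) non-contact sphere can be joined to the chosen reference sphere by a continuous path of spheres \emph{that never makes contact with $\mathcal{H}$}. The interior region of $\mathcal{H}$ is simply connected, so a path within it exists for the center; one must additionally keep the radius small enough (or shrink it) so the whole sphere stays interior throughout, which is where the simple-connectedness noted in Section~\ref{subsect:setting-context} is used. For the exterior case the exterior region is not simply connected, so I would take care to route the center on the appropriate side and, if necessary, invoke that the root count is a deformation invariant only across paths avoiding the tangency locus; the honest version of this argument is that crossing from four real roots to complex roots, or a sign change of a root, must pass through either a double root (tangency, excluded by Lemma~\ref{lemma:tangent-implies-multipleroot}) or a zero root (excluded since $f(0)=-r^2\neq0$), and both are impossible along a no-contact path, so the configuration type is locally and hence globally constant on each connected component of no-contact spheres.
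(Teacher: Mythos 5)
Your overall strategy (deform to an axis/equatorial reference configuration, anchor at the explicit computations of Lemmas~\ref{lemma:eixoz} and \ref{lemma:planoxy}, and propagate the root configuration back along a no-contact path) is exactly the paper's, but two steps as written are wrong. First, you claim that ``Lemma~\ref{lemma:tangent-implies-multipleroot} in its contrapositive form'' gives: no tangency $\Rightarrow$ no multiple real root. That is the \emph{converse}, not the contrapositive, and it is false: $-a^2$ can be a double root without any tangency --- indeed your own interior reference configuration has $\lambda_1=\lambda_2=-a^2$ with no contact, and the very statement you are proving allows $\lambda_2=-a^2$. What is actually available is Lemma~\ref{lemma:multipleroottangent} (a multiple root \emph{different from} $-a^2$ forces tangency) together with Lemma~\ref{lemma:rootatangent} ($-a^2$ triple forces tangency); the harmless exception, a double root at $-a^2$, must be controlled separately, and the paper does so via Lemma~\ref{lemma:roota}: $-a^2$ is a multiple root iff $x_c=y_c=0$, so along the chosen path this degeneracy occurs only at the terminal point. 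Without that, your ``no collisions, hence the configuration is locally constant'' argument does not go through near $-a^2$: a priori a root could cross the value $-a^2$, which is neither a collision with another moving root away from $-a^2$ nor a zero-crossing in your bookkeeping.

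Second, in the exterior case your conclusion does not follow from what you establish. Showing that the three remaining roots stay real, positive and distinct does not distinguish the no-contact position E from the contact position Cm of Table~3, whose roots are also three distinct positive numbers but satisfy $c^2\leq\lambda_2<\lambda_3<\lambda_4$. The two displayed alternatives of the lemma are both refinements of the \emph{first} alternative of Lemma~\ref{lemma:distribucion-raices}(2); the second alternative ($c^2\leq\lambda_2$) is precisely what must be excluded, and your argument never excludes it, since a root crossing the value $c^2$ is neither a root collision nor a zero-crossing. The missing ingredient is Lemma~\ref{lemma:rootc}: $c^2$ is a root iff $z_c=0$, so along the paper's vertical path $\beta(t)=(x_c,y_c,(1-t)z_c)$ (whose no-contact property the paper justifies by the symmetric vertical slices of the exterior region --- you leave the exterior path unspecified) no root can equal $c^2$ before the endpoint, while at the endpoint Lemma~\ref{lemma:planoxy} gives $\lambda_2(1)<c^2$; continuity then pins $\lambda_2(0)<c^2$ and Lemma~\ref{lemma:distribucion-raices}(2) finishes. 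Note also that your exterior reference ``with $ar\leq c^2$'' need not exist: $a$, $c$, $r$ are given data, and when $c^2<ar$ you must use the other branch of Lemma~\ref{lemma:planoxy}, whose no-contact configuration is $\lambda_2<\lambda_3=c^2<\lambda_4$.
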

\proof
Assume $\mathcal{S}$ is interior to $\mathcal{H}$ and there is no contact. If $x_c=y_c=0$, the center of $\mathcal{S}$ is in the $OZ$ axis and that case was previously considered in Lemma~\ref{lemma:eixoz}, from where it follows the result for that particular situation. Assume $x_c\neq 0$ or $y_c\neq 0$ henceforth. By Lemma~\ref{lemma:roota} we have that $-a^2$ is not a double root. The interior of $\mathcal{H}$ for a fixed value of $z$ is a disk, so it is convex. Hence we can build a path $\alpha(t)=((1-t) x_c,(1-t) y_c,z_c)$, $t\in [0,1]$, so that $\alpha(0)$ is the center of $\mathcal{S}$, $\alpha(1)$ is in the OZ axis, and we move the center of the sphere along $\alpha$ without touching $\mathcal{H}$. As before, define $f_t(\lambda)$ to be the characteristic polynomial when the center of $\mathcal{S}$ is at $\alpha(t)$. From Lemma~\ref{lemma:eixoz}  we know that  $f_1(\lambda)$ has roots $\lambda_i(1)$, $i=1,\dots,4,$ satisfying $-a^2=\lambda_1(1)=\lambda_2(1)<\lambda_3(1)<0<\lambda_4(1)$. Since the coefficients of $f_t(\lambda)$ are continuous functions on the variable $t$, so are the roots $\lambda_i(t)$ of $f_t(\lambda)$. Since there is no contact between $\mathcal{S}$ and $\mathcal{H}$  when the center of $\mathcal{S}$ moves along $\alpha$, we have that:
\begin{itemize}
\item there are no complex roots for $t\in [0,1]$: if there were a complex root, by Lemma~\ref{lemma:complexroots}, there would exist $t_d\in (0,1)$ so that there is a double root different from $-a^2$ for $f_{t_d}(\lambda)$. But, by Lemma~\ref{lemma:multipleroottangent}, that would imply that there is a tangent point, which is not the case.
\item there are 3 negative roots for $t\in [0,1]$: since there are 3 negative roots for $t=1$ and the roots are continuous functions which never take the value $0$ or complex values, there are always $3$ negative roots.
\item at least one root belongs to the interval $(-a^2,0)$: we have that $\lambda_3(1)\in (-a^2,0)$, that $\lambda_3$ is a continuous function,  that $-a^2$ is not a double root for $t\in[0,1)$ (by Lemma~\ref{lemma:roota}), that $0$ is not a root and that there are no complex roots along $\alpha$. Then $\lambda_3(t)\in  (-a^2,0)$ for $t\in [0,1]$. 
\end{itemize}
Now, by Lemma~\ref{lemma:distribucion-raices}, it follows that $\lambda_2(1)\in[-a^2,\lambda_3(1)]$. Since $\lambda_2(1)=\lambda_3(1)$ would imply tangency by Lemma~\ref{lemma:multipleroottangent}, we conclude $\lambda_2(1)\in[-a^2,\lambda_3(1))$ and assertion (1) follows.

To prove assertion (2), we use a similar strategy. Assume $\mathcal{S}$ is exterior to $\mathcal{H}$ and there is no contact. We build a path from the center of $\mathcal{S}$ to a point in the $XY$-plane. This can be done without contact between the sphere and the hyperboloid in the following way. Due to the connectivity of every vertical segment joining two points of $\mathcal{H}$, we can build a path $\beta(t)= (x_c,y_c, (1-t)z_c)$, $t\in [0,1]$ so that $\beta(0)$ is the center of $\mathcal{S}$ and $\beta(1)$ is the projection in the $XY$-plane. Hence $\beta$ is a path joining $(x_c,y_c,z_c)$ and $(x_c ,y_c,0)$ without contact between the sphere and the hyperboloid when the center of $\mathcal{S}$ moves along the path. Since there is no contact between $\mathcal{H}$ and $\mathcal{S}$, we have that $\rho_c>a+r$, so we use Lemma~\ref{lemma:planoxy} to see that the roots of $f_1(\lambda)$ satisfy  $\lambda_1=-a^2<0<\lambda_2< ar<\lambda_3$ and $\lambda_4=c^2$, if $ar\leq c^2$, or $\lambda_1=-a^2<0<\lambda_2<\lambda_3=c^2<ar<\lambda_4$ if $c^2<ar$. We use the previous lemmas to obtain assertion (2) as follows:
\begin{itemize}
\item there are no complex roots for $t\in [0,1]$: by Lemmas~\ref{lemma:complexroots} and \ref{lemma:multipleroottangent} that would imply that there is a tangent point, which is not the case.
\item there are three positive roots for $t\in [0,1]$: since there are three positive roots at the final point of the path and the roots are continuous functions which never take the value $0$ or complex values, there are always three positive roots.
\item  the three positive roots are different: as a consequence of Lemma~\ref{lemma:multipleroottangent} a multiple root would imply tangency, which is not the case as we have seen.
\end{itemize}
Now, by Lemma~\ref{lemma:distribucion-raices} we know that it is not possible that $\lambda_2<c^2<\lambda_3<\lambda_4$ so necessarily  $0<\lambda_2<\lambda_3< c^2 <\lambda_4$
if $z_c\neq 0$. This completes the proof of the lemma.
\qed 

\medskip

Henceforth we are going to consider non-tangent contact. If $r<a$, the intersection between $\mathcal{S}$ and $\mathcal{H}$ is a curve with only one connected component. However, if $r\geq a$ we distinguish two kinds of possible non-tangent contact: the intersection between $\mathcal{S}$ and $\mathcal{H}$ is a curve with one or with two connected components. 
First, we identify the meaning of complex roots in terms of the relative position between $\mathcal{S}$ and $\mathcal{H}$ for the case in which the intersection curve has one connected component.

\begin{lemma}\label{lemma:contact-implies-complexroots}
If $\mathcal{S}$ and $\mathcal{H}$ intersect only in a connected curve (without tangency) then there are two complex conjugate roots.
\end{lemma}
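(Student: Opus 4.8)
The plan is to argue by contradiction using the moving-sphere technique of Lemma~\ref{lemma:complexroots}, transporting the given configuration to one of the explicit reference spheres of Example~\ref{example:complex-roots}, which are known to produce complex roots. Suppose $\mathcal{S}$ and $\mathcal{H}$ intersect in a single connected curve without tangency but, contrary to the claim, $f$ has no complex roots. Then all four roots are real: $-a^2$, the positive root $\lambda_4$ guaranteed by Lemma~\ref{lemma:tecnical-results-1}, and two further real roots. Since the contact is non-tangent, Theorem~\ref{th:non-tangency} forbids any multiple root $\lambda\neq-a^2$ as well as $-a^2$ being triple; moreover a connected intersection cannot occur when the center lies on the $OZ$ axis, because Lemma~\ref{lemma:eixoz} shows that on-axis contact is either tangent (type TIc) or of type Ca, which has two connected components. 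Hence the center of $\mathcal{S}$ is off the axis, so $x_c\neq 0$ or $y_c\neq 0$ and, by Lemma~\ref{lemma:roota}, $-a^2$ is a simple root.

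First I would use the circular symmetry recorded in Remark~\ref{remark:coordinates} to rotate the center into the half-plane $\theta_c=0$, writing it as $(\rho_c,0,z_c)$ with $\rho_c>0$. Then I would join this center to the center of a suitable reference sphere from Example~\ref{example:complex-roots} (for instance $(a,0,0)$ with the same radius when $r<a$, and $(r,0,0)$ when $r\ge a$) by a continuous path $\alpha(t)$ of centers chosen so that, for every $t$, the sphere $S(t)$ meets $\mathcal{H}$ transversally in a single connected curve. Along such a path no tangency occurs, so by Lemma~\ref{lemma:multipleroottangent} there is no multiple root $\lambda\neq-a^2$, and since the center stays off the axis $-a^2$ remains simple throughout. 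Now $f_{\alpha(0)}$ would have four real roots by our assumption, while $f_{\alpha(1)}$ has two real and two complex roots by Example~\ref{example:complex-roots}; Lemma~\ref{lemma:complexroots}(2) then yields an intermediate parameter $t_d$ at which $f_{t_d}$ has a multiple root. This multiple root is different from $-a^2$ (a value $-a^2$ would force $-a^2$ triple, hence on-axis), so Lemma~\ref{lemma:multipleroottangent} produces a tangency at $t_d$, contradicting the non-tangency maintained along $\alpha$. Therefore the assumption fails and the two remaining roots must be complex conjugate.

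The crux of the argument is the construction of the path $\alpha(t)$ keeping the intersection both transversal and connected. Non-tangency is exactly transversality of $\mathcal{H}$ and $S(t)$, so on the open set of centers giving transversal intersections the curve $\mathcal{H}\cap S(t)$ is a smooth $1$-manifold depending smoothly on $t$, whence its number of connected components is locally constant; thus it suffices to check that the initial center and the reference center lie in the same connected component of this open set within the half-plane $\theta_c=0$. I would verify this by an explicit geometric path, sliding the center toward a point of $\mathcal{H}$ while controlling $z_c$ and $\rho_c$ so that contact is neither lost nor made tangent, handling the cases $r<a$ and $r\ge a$ separately with the reference sphere adapted to each. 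The delicate point, and the main obstacle, is ensuring that no intermediate center crosses a tangency or a topology-changing configuration; this is where the connectivity hypothesis is genuinely used and must be argued geometrically rather than through the algebra of the roots alone.
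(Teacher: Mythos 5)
Your overall strategy coincides with the paper's: transport the sphere along a path of non-tangent contact to a reference center of Example~\ref{example:complex-roots}, then combine Lemma~\ref{lemma:complexroots}(2) with Lemma~\ref{lemma:multipleroottangent} (equivalently Theorem~\ref{th:non-tangency}) to show that a transition from four real roots to two complex roots is impossible without tangency. Your bookkeeping for the contradiction is also correct: by Lemma~\ref{lemma:eixoz} a connected non-tangent intersection forces the center off the $OZ$ axis, so by Lemma~\ref{lemma:roota} the root $-a^2$ stays simple, and any multiple root produced by Lemma~\ref{lemma:complexroots} is $\neq -a^2$ and yields tangency. The genuine gap is that the proposal stops exactly where the proof has to begin: the path $\alpha(t)$ with non-tangent contact for every $t$ is never produced, and you yourself flag its construction as ``the main obstacle''. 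Recasting the problem as ``the initial center and the reference center lie in the same connected component of the open set of transversally intersecting configurations'' is not progress, because certifying that membership \emph{is} the problem of building the path; it is also precisely where the connectedness hypothesis must do its work, since non-tangent contact alone is compatible with all-real roots (types Ca and Cm of Lemma~\ref{lemma:contact-2connectedcomponents}), so any path argument that ignores connectedness is doomed to fail. As written, the proposal is a plan whose decisive step is deferred, not a proof.

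The paper closes this gap with concrete paths that \emph{anchor the center to the hyperboloid}. After rotating about the $OZ$ axis into the half-plane $\theta=0$ (a symmetry of $\mathcal{H}$, so the relative position is literally unchanged), the center $(\rho_c,0,z_c)$ is moved horizontally to the point of $\mathcal{H}$ at the same height, namely $\bigl(\tfrac{a}{c}\sqrt{c^2+z_c^2},0,z_c\bigr)$, in the case $r\le a$, or to the locus at horizontal distance $r-a$ outside $\mathcal{H}$, namely $\bigl(r-a+\tfrac{a}{c}\sqrt{c^2+z_c^2},0,z_c\bigr)$, in the case $r\ge a$; the center is then slid down this anchored locus to the plane $z=0$, landing exactly at $(a,0,0)$, respectively $(r,0,0)$, which are the centers of Example~\ref{example:complex-roots}(1) and (2). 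Anchoring is the device that makes persistence of off-axis, non-tangent contact a concrete, checkable assertion about an explicit one-parameter family, rather than an abstract connectivity claim. To complete your argument you would need to exhibit such explicit paths (or an equivalent mechanism) and verify non-tangency along them, handling $r\le a$ and $r\ge a$ separately as you anticipated.
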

\begin{proof}
We distinguish two cases: $r\leq a$ and $r>a$. Assume first $r\leq a$. We are going to build a continuous path from $(x_c,y_c,z_c)$ to $(a,0,0)$ and use Example~\ref{example:complex-roots}(1). 

For cylindrical coordinates $\rho_c=\sqrt{x_c^2+y_c^2}$ and $\theta_c=\arctan \frac{y}x$, we consider the continuous path in cartesian coordinates from $(x_c,y_c,z_c)$ to $(a,0,0)$ given by
$$\gamma(t)=\left\{\begin{array}{lcr}
\left(\rho_c\cos((1-4t)\theta_0),\rho_c\sin((1-4t)\theta_0),z_c \right)
&  & \mbox{ if } 0\leq t\leq\frac{1}{4}, \\
& &\\
\big((2-4t)\rho_c+(4t-1)\frac{a}{c}\sqrt{c^2+z_c^2},0,z_c\big) &  & \mbox{ if } \frac{1}{4}\leq t\leq\frac{1}{2}, \\
& & \\
\big(\frac{a}{c}\sqrt{c^2+(2-2t)^2z_c^2},0,(2-2t)z_c\big) &  & \mbox{ if } \frac{1}{2}\leq t\leq 1,
\end{array}\right.$$

\noindent The points  $\gamma(0)$, $\gamma(1/4)$, $\gamma(1/2)$ and $\gamma(1)$ correspond, respectively, to points $C$, $D$, $E$ and $\tilde C$ in the picture  below


\begin{center}
\includegraphics[scale=0.25]{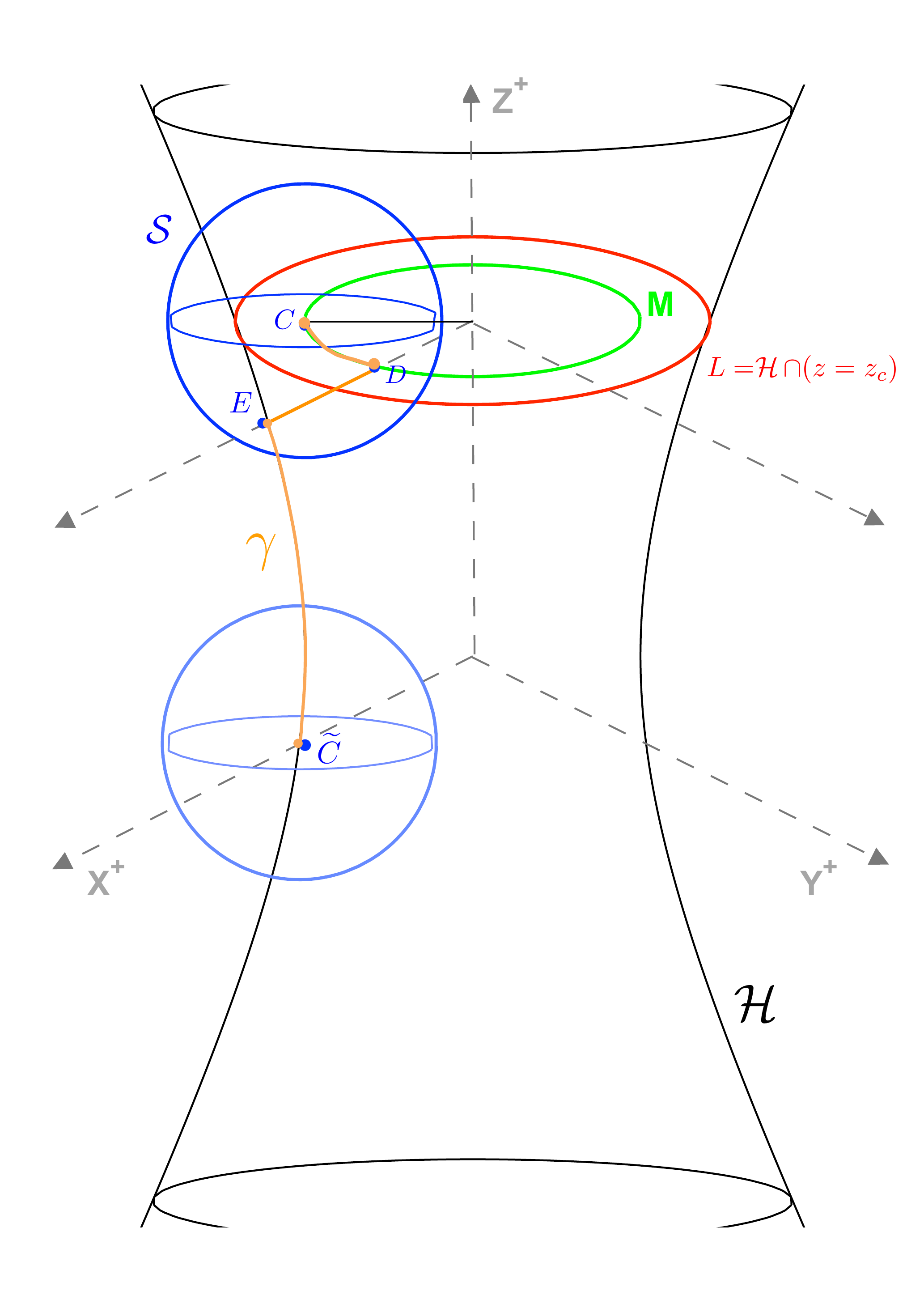}
\end{center}

 \noindent As Example~\ref{example:complex-roots} shows, there are two complex conjugate roots at the end of the path. Moreover, all along the path there is non-tangent contact between $\mathcal{S}$ and $\mathcal{H}$. As a consequence of Theorem~\ref{th:non-tangency}, there are no multiple roots all along the path. This means, by Lemma~\ref{lemma:complexroots}, that there are two complex roots for all $t\in[0,1]$ and, in particular, at the initial point. This completes the proof for $r< 2a$.   

Now assume $r\geq a$. The argument is analogous to the previous one, but we use Example~\ref{example:complex-roots}(2) and now the path is given by
$$\beta(t)=\left\{\begin{array}{lcr}
\left(\rho_c\cos((1-4t)\theta_0),\rho_c\sin((1-4t)\theta_0),z_c \right)
&  & \mbox{ if } 0\leq t\leq\frac{1}{4}, \\
&  &\\
\big((2-4t)\rho_c+(4t-1)\left[r-a+\frac{a}{c}\sqrt{c^2+z_c^2}\right],0,z_c\big) &  & \mbox{ if } \frac{1}{4}\leq t\leq\frac{1}{2}, \\
& & \\
\big(r-a+\frac{a}{c}\sqrt{c^2+(2-2t)^2z_c^2},0,(2-2t)z_c\big) &  & \mbox{ para } \frac{1}{2}\leq t\leq 1.\end{array}\right.$$
\noindent Thus one has that $\beta(0)=(x_c,y_c,z_c)$, that $\beta(1/4)=(\rho_c,0,z_c)$, that   $\beta(1/2)=(r-a+\frac{a}{c}\sqrt{c^2+z_c^2},0,z_c)$ and that $\beta(1)=(r,0,0)$. Again, there is non tangent contact all along the path and at the final point we have complex roots, so the previous argument also applies in this case. This completes the proof of the lemma.
\end{proof}

Lemma~\ref{lemma:contact-implies-complexroots} identifies  when the intersection is connected in terms of the roots.
Now we study the case in which the intersection curve between $\mathcal{S}$ and $\mathcal{H}$ has two connected components. There are two situations that are different in essence and are identified by a different configuration of roots: one which corresponds to Picture Ca in Table 2, and other that corresponds to Picture Cm in Table 3. In the first case we need that $a<r$ and in the second one that $c^2<ar$. See Lemmas~\ref{lemma:eixoz}(1) and \ref{lemma:planoxy}(1) for examples of these two situations.

\begin{lemma}\label{lemma:contact-2connectedcomponents}
If $\mathcal{S}$ and $\mathcal{H}$ intersect in a curve with two connected components then 
\begin{enumerate}
\item for Type~Ca in Table 2 the roots are $\lambda_1=-a^2$, $\lambda_2$, $\lambda_3$ and $\lambda_4$ such that $\lambda_2<\lambda_3\leq-a^2$, and $\lambda_4>0$;
\item for Type~Cm in Table 3 the roots are $\lambda_1=-a^2$, $\lambda_2$, $\lambda_3$ and $\lambda_4$ such that  $0<c^2\leq\lambda_2<\lambda_3<\lambda_4$.
\end{enumerate}
\end{lemma}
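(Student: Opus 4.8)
The plan is to prove both parts by the moving-sphere technique, reducing an arbitrary two-component non-tangent contact to a distinguished base position whose roots are already computed in the previous lemmas, and then transporting the root configuration by continuity. Two facts do the heavy lifting along any path of moving spheres that keeps non-tangent contact: by Theorem~\ref{th:non-tangency} together with Lemma~\ref{lemma:complexroots} the four roots can neither collide into a multiple root $\lambda\neq-a^2$ (that would force tangency) nor become complex, and $0$ is never a root by Lemma~\ref{lemma:tecnical-results-1}. Hence the number of real roots of each sign stays constant along such a path. In addition, Lemma~\ref{lemma:distribucion-raices} forbids the configurations $\lambda_2<-a^2<\lambda_3$ among negative roots and $\lambda_2<c^2<\lambda_3$ among positive roots, which is exactly what will pin the remaining root to the correct side of $-a^2$ (for Type~Ca) or of $c^2$ (for Type~Cm).

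For assertion (1), where necessarily $a<r$, I would take as base position a sphere centered on the $OZ$ axis with $z_c$ chosen so that $\mathcal{S}$ meets $\mathcal{H}$ in two circles; by Lemma~\ref{lemma:eixoz}(1) its roots satisfy $\lambda_2<-a^2=\lambda_1=\lambda_3<0<\lambda_4$. Given any other Type~Ca configuration I would connect its center to the axis by a continuous path along which the contact stays two-component and non-tangent, translating the center radially toward $OZ$. On the open part of the path $x_c\neq0$ or $y_c\neq0$, so by Lemma~\ref{lemma:roota} the root $-a^2$ is simple and none of $\lambda_2,\lambda_3,\lambda_4$ equals $-a^2$ there. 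By the remarks above the three roots $\lambda_2,\lambda_3,\lambda_4$ stay real, nonzero and non-colliding, so their signs persist: three negative and one positive, as at the base. Finally $\lambda_2<-a^2$ holds at the base and, being unable to cross $-a^2$ off the axis, persists; Lemma~\ref{lemma:distribucion-raices}(1) then forces $\lambda_3\leq-a^2$, giving $\lambda_2<\lambda_3\leq-a^2$ and $\lambda_4>0$.

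Assertion (2), where necessarily $c^2<ar$, follows the same scheme with the roles of $-a^2$ and the axis replaced by $c^2$ and the plane $z=0$. The base position is a sphere centered in the $XY$-plane with $\rho_c$ slightly larger than $a+r$, which by the non-tangent contact alternative of Lemma~\ref{lemma:planoxy} (Picture Cm) has roots $\lambda_1=-a^2<0<\lambda_2=c^2<\lambda_3<\lambda_4$. I would connect an arbitrary Type~Cm center to the plane $z=0$ by a path keeping two-component non-tangent contact; on its open part $z_c\neq0$, so by Lemma~\ref{lemma:rootc} the value $c^2$ is not a root and none of $\lambda_2,\lambda_3,\lambda_4$ equals $c^2$. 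As before the three positive roots stay positive and distinct. Since $\lambda_3>c^2$ at the base and cannot reach $c^2$ off the plane, it remains $>c^2$; Lemma~\ref{lemma:distribucion-raices}(2) then forbids $\lambda_2<c^2<\lambda_3$, forcing $\lambda_2\geq c^2$, so $0<c^2\leq\lambda_2<\lambda_3<\lambda_4$.

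The delicate point in both parts, and the one I expect to be the main obstacle, is the existence of the connecting path: one must verify that as the center is translated to the axis (respectively to the plane $z=0$) the intersection remains a two-component curve that never degenerates to tangency nor merges its components. Since a change in the topological type of the intersection of two smooth quadrics can only occur through a tangency, and tangency is a multiple root by Theorem~\ref{th:non-tangency}, this amounts to showing that the region of Type~Ca (respectively Type~Cm) configurations is path-connected to the base without crossing a tangent configuration. I would establish this geometrically, exploiting the convexity of the horizontal cross-sections of the interior of $\mathcal{H}$ for Type~Ca and the symmetry $z\mapsto -z$ of the configuration for Type~Cm (the two components being reflections of one another across $z=0$), so that a radial translation of the center keeps both components present and disjoint all the way to the base position.
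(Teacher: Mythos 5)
Your proposal is correct and follows essentially the same route as the paper's own proof: both parts proceed by the moving-sphere technique, translating the center radially to the $OZ$ axis for Type Ca and vertically to the plane $z=0$ for Type Cm, reading off the base configuration from Lemma~\ref{lemma:eixoz} resp.\ Lemma~\ref{lemma:planoxy}, and transporting it back by continuity of the roots, with Theorem~\ref{th:non-tangency}, Lemma~\ref{lemma:complexroots} and Lemma~\ref{lemma:distribucion-raices} excluding collisions, complex pairs and forbidden orderings. The ``delicate point'' you flag---that the intersection stays two-component and non-tangent along the connecting path---is asserted without proof in the paper as well, so your write-up is, if anything, more explicit about that step.
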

\begin{proof}
The argument  of the proof is similar to previous ones. In the first case, corresponding to the relative position represented by Type~Ca (Table 2), we are going to build a path for the center of the sphere from $(x_c,y_c,z_c)$ to $(0,0,z_c)$. Consider the parametrized curve $\alpha(t)=(x_c (1-t),y_c (1-t), z_c)$ for $t\in[0,1]$. Let $\lambda_1(t)$, $\lambda_2(t)$, $\lambda_3(t)$ and $\lambda_4(t)$ be the roots of $f_t(\lambda)$. In virtue of Lemma~\ref{lemma:eixoz}, the roots at $\alpha(1)$ satisfy $\lambda_2(1)<-a^2=\lambda_1(1)=\lambda_3(1)< 0<\lambda_4(1)$. Note that $\mathcal{S}$ and $\mathcal{H}$ are not tangent for any value of $t\in [0,1]$, so $\lambda_2\neq -a^2$ (Lemma~\ref{lemma:rootatangent}) and, as a consequence of  Lemma~\ref{lemma:complexroots}, $\lambda_2(t)$ cannot be complex, so $\lambda_2(t)<-a^2$ for $t\in [0,1]$. Now, by Lemma~\ref{lemma:distribucion-raices}, the configuration of roots follows. 

Consider now the case represented in Picture Cm (Table 3). Acting in the same fashion we build a continuous path from the center of $\mathcal{S}$ to the $XY$-plane as $\beta(t)=(x_c,y_c,(1-t)z_c)$. Since $\beta(1)$ belongs to the $XY$-plane, Lemma~\ref{lemma:planoxy} applies and we have roots $\lambda_i(t)$ satisfying $\lambda_1(1)=-a^2<0<c^2= \lambda_2(1)< \lambda_3(1)<\lambda_4(1)$. By continuity of the path and roots, and using Lemma~\ref{lemma:distribucion-raices} to see that $c^2\leq \lambda_2(t)$, we get that
$\lambda_1(t)=-a^2<0<c^2\leq \lambda_2(t)< \lambda_3(t)<\lambda_4(t)$ for all $t\in[0,1]$. 
\end{proof}

\section{Proofs of the main results}\label{sect:proofs-ths}

In this section we prove the results stated in Section~\ref{sect:main-results} as a consequence of the analysis developed in Section~\ref{sect:analysis}. 

For the proofs of Theorems~\ref{th:th1}, \ref{th:th2} and \ref{th:th3} we proceed in the following way: we will show that for each relative position between $\mathcal{H}$ and $\mathcal{S}$, the roots of the characteristic polynomial satisfy the relations given in the corresponding table. This is, we show that this relations are necessary conditions. Since all the relative positions are mutually exclusive (and so are the configurations of roots given in Table~1, Table~2 and Table~3) and they cover all relative positions between $\mathcal{H}$ and $\mathcal{S}$, it automatically follows that the conditions are also sufficient. Hence, a certain configuration of roots implies the corresponding relative position given in the tables. 

{\it Proof of Theorem~\ref{th:th1}.} We establish one implication, showing that for a given relative position between $\mathcal{H}$ and $\mathcal{S}$ the roots of the characteristic polynomial satisfy the relations given in Table 1. 

\begin{enumerate}
\item Types~I and E: these follow directly from Lemma~\ref{lemma:nocontact}.
\item Types~TI and TE: by Lemma~\ref{lemma:tangent-implies-multipleroot} we have that there exists a multiple root. Moreover, these two relative positions can be obtained by moving a sphere continously from positions of Type~I and E, respectively. Since the roots are also continuous, the two configurations of roots are obtained as a consequence of this fact.
\item Type~C: this follows directly from Lemma~\ref{lemma:contact-implies-complexroots}.\qed
\end{enumerate}

\medskip

{\it Proof of Corollary}~\ref{co:rlessa}.
Under the hypothesis $r<a$ and $ar<c^2$, all possible relative positions are given in Theorem~\ref{th:th1}. In particular, if there is contact between $\mathcal{H}$ and $\mathcal{S}$, the intersection curve is connected.
 
Assume there is contact between $\mathcal{H}$ and $\mathcal{S}$. By Lemma~\ref{lemma:contact-implies-complexroots}, if the contact is non-tangent then there are complex roots, whereas Theorem~\ref{th:non-tangency} shows that for tangent contact there is a double root different from $-a^2$ or $-a^2$ is a triple root. The latter case is not possible since, by Lemma~\ref{lemma:roota}, if $-a^2$ is a triple root then $a\leq r$, which contradicts one of the hypothesis.
 
Reciprocally, if there are complex roots, then we use  Lemma~\ref{lemma:nocontact} to conclude that there is contact. If there is a double root different from $-a^2$ then, by Theorem~\ref{th:non-tangency}, there is tangent contact. \qed

\medskip

{\it Proof of Theorem~\ref{th:th2}.} As before, we show that the relative positions described in Table~2 imply the corresponding configuration of roots. Assume $a\leq r$ henceforth. 
\begin{enumerate}
\item Type TIc: it follows from Lemma~\ref{lemma:rootatangent}.
\item Type Ca: it was treated in Lemma~\ref{lemma:contact-2connectedcomponents}(1).
\item Type Td: Lemma~\ref{lemma:tangent-implies-multipleroot} shows there is a multiple root. Now, since we can obtain a Type Td relative position by moving a sphere along a continuous path from a Type Ca position, the configuration follows.\qed
\end{enumerate}

\medskip

{\it Proof of Theorem~\ref{th:th3}.}  Assume $c^2<ar$. Again we argue to proof the necessity of the root configurations:
\begin{enumerate}
\item Type TEs: it follows from Lemma~\ref{lemma:rootc} and Lemma~\ref{lemma:tangency-rootc} (see also Lemma~\ref{lemma:planoxy}).
\item Type TEs1: this corresponds to Lemma~\ref{lemma:TEs1}.
\item Type Cm: it corresponds to Lemma~\ref{lemma:contact-2connectedcomponents}(2).
\item Type TEs2: by Lemma~\ref{lemma:tangent-implies-multipleroot} there is a multiple root. As one can move the sphere along a continuous path from Type Cm to Type TEs2, the roots configuration follows. \qed
\end{enumerate}

\section*{Acknowledgements}
This work was done by the members of the Research Group {\it Xeometría Diferencial e as súas Aplicacións} (Universidade da Coruña), which was founded by Professor Eugenio Merino Gayoso. The authors would like to dedicate this work to the memory of his friend and colleague Eugenio, who passed away in December, 2012.

\end{document}